\begin{document}
\newcommand {\emptycomment}[1]{} 

\baselineskip=15pt
\newcommand{\nc}{\newcommand}
\newcommand{\delete}[1]{}
\nc{\mfootnote}[1]{\footnote{#1}} 
\nc{\todo}[1]{\tred{To do:} #1}

\nc{\mlabel}[1]{\label{#1}}  
\nc{\mcite}[1]{\cite{#1}}  
\nc{\mref}[1]{\ref{#1}}  
\nc{\meqref}[1]{\eqref{#1}} 
\nc{\mbibitem}[1]{\bibitem{#1}} 

\delete{
\nc{\mlabel}[1]{\label{#1}  
{\hfill \hspace{1cm}{\bf{{\ }\hfill(#1)}}}}
\nc{\mcite}[1]{\cite{#1}{{\bf{{\ }(#1)}}}}  
\nc{\mref}[1]{\ref{#1}{{\bf{{\ }(#1)}}}}  
\nc{\meqref}[1]{\eqref{#1}{{\bf{{\ }(#1)}}}} 
\nc{\mbibitem}[1]{\bibitem[\bf #1]{#1}} 
}

\newcommand {\comment}[1]{{\marginpar{*}\scriptsize\textbf{Comments:} #1}}
\nc{\mrm}[1]{{\rm #1}}
\nc{\id}{\mrm{id}}  \nc{\Id}{\mrm{Id}}
\nc{\admset}{\{\pm x\}\cup (-x+K^{\times}) \cup K^{\times} x^{-1}}

\def\a{\alpha}
\def\ad{associative D-}
\def\padm{$P$-admissible~}
\def\asi{ASI~}
\def\aybe{aYBe~}
\def\b{\beta}
\def\bd{\boxdot}
\def\bbf{\overline{f}}
\def\bF{\overline{F}}
\def\bbF{\overline{\overline{F}}}
\def\bbbf{\overline{\overline{f}}}
\def\bg{\overline{g}}
\def\bG{\overline{G}}
\def\bbG{\overline{\overline{G}}}
\def\bbg{\overline{\overline{g}}}
\def\bT{\overline{T}}
\def\bt{\overline{t}}
\def\bbT{\overline{\overline{T}}}
\def\bbt{\overline{\overline{t}}}
\def\bR{\overline{R}}
\def\br{\overline{r}}
\def\bbR{\overline{\overline{R}}}
\def\bbr{\overline{\overline{r}}}
\def\bu{\overline{u}}
\def\bU{\overline{U}}
\def\bbU{\overline{\overline{U}}}
\def\bbu{\overline{\overline{u}}}
\def\bw{\overline{w}}
\def\bW{\overline{W}}
\def\bbW{\overline{\overline{W}}}
\def\bbw{\overline{\overline{w}}}
\def\btl{\blacktriangleright}
\def\btr{\blacktriangleleft}
\def\calo{\mathcal{O}}
\def\ci{\circ}
\def\d{\delta}
\def\dd{\diamondsuit}
\def\D{\Delta}
\def\frakB{\mathfrak{B}}
\def\G{\Gamma}
\def\g{\gamma}
\def\gg{\mathfrak{g}}
\def\hh{\mathfrak{h}}
\def\k{\kappa}
\def\l{\lambda}
\def\ll{\mathfrak{L}}
\def\lh{\leftharpoonup}
\def\lr{\longrightarrow}
\def\N{Nijenhuis~}
\def\o{\otimes}
\def\om{\omega}
\def\opa{\cdot_{A}}
\def\opb{\cdot_{B}}
\def\p{\psi}
\def\r{\rho}
\def\ra{\rightarrow}
\def\rep{representation~}
\def\rh{\rightharpoonup}
\def\rr{{\Phi_r}}
\def\s{\sigma}
\def\st{\star}
\def\ss{\mathfrak{sl}_2}
\def\ti{\times}
\def\tl{\triangleright}
\def\tr{\triangleleft}
\def\v{\varepsilon}
\def\vp{\varphi}
\def\vth{\vartheta}
\def\wn{\widetilde{N}}
\def\wb{\widetilde{\beta}}

\newtheorem{thm}{Theorem}[section]
\newtheorem{lem}[thm]{Lemma}
\newtheorem{cor}[thm]{Corollary}
\newtheorem{pro}[thm]{Proposition}
\theoremstyle{definition}
\newtheorem{defi}[thm]{Definition}
\newtheorem{ex}[thm]{Example}
\newtheorem{rmk}[thm]{Remark}
\newtheorem{pdef}[thm]{Proposition-Definition}
\newtheorem{condition}[thm]{Condition}
\newtheorem{question}[thm]{Question}
\renewcommand{\labelenumi}{{\rm(\alph{enumi})}}
\renewcommand{\theenumi}{\alph{enumi}}

\nc{\ts}[1]{\textcolor{purple}{MTS:#1}}
\font\cyr=wncyr10


\title{Classical Yang-Baxter equations and Nijenhuis operators for Lie algebras}

 \author{Haiying Li}
 \address{School of Mathematics and Statistics, Henan Normal University, Xinxiang 453007, China}
         \email{lihaiying@htu.edu.cn}

\author{Tianshui Ma \textsuperscript{1,2*}}
\address{1. School of Mathematics and Statistics, Henan Normal University, Xinxiang 453007, China;\ ~~2. Institute of Mathematics, Henan Academy of Sciences, Zhengzhou 450046, China}
         \email{matianshui@htu.edu.cn}

 \thanks{\textsuperscript{*}Corresponding author}

\date{\today}

 \begin{abstract} In this paper the conditions that when a Lie algebra is Nijenhuis are investigated. Furthermore all the Nijenhuis operators on $\ss$ under the standard Cartan-Weyl basis are given. On the other hand, the relations between the classical Yang-Baxter equation and Nijenhuis operators $N$ on a Lie algebra and $P$ on a Lie coalgebra are derived by means of the bialgebraic theory for Nijenhuis Lie algebras.
 \end{abstract}

\subjclass[2020]{
17B38,  
17B62,   
16T25,   
17A30.  
}

\keywords{classical Yang-Baxter equations; weak symplectic structure; Nijenhuis operators; Lie bialgebras}

\maketitle

\vspace{-1.2cm}

\tableofcontents

\allowdisplaybreaks
\bigskip

\section{Introduction }\label{se:ip}
 One of the aims of this paper is to prove when a Lie algebra is Nijenhuis, which means that Lie algebras and Nijenhuis operators are closely related naturally. The other aim is to find the compatible conditions between classical Yang-Baxter equations and Nijenhuis operators for Lie algebra and Lie coalgebra by establishing the bialgebraic theory for Nijenhuis Lie algebras.

 As is well known that a pair $(\ll, [,])$ is called a {\bf Lie algebra} if the linear map $[,]: \ll\otimes\ll\lr \ll$ satisfies
 \begin{eqnarray}
 &[x, y]=-[y, x],&\label{eq:anti}\\
 &[[x, y], z]+[[y, z], x]+[[z, x], y]=0,&\label{eq:j}
 \end{eqnarray}
 where $x, y, z \in \ll$. The most best-known quantum group is $U_q(\ss)$, which is deformation of the universal enveloping algebra of the 3-dimensional complex Lie algebra $\ss$. $\ss$ is the smallest simple Lie algebra. Concretely, the structure $[,]$ of $\ss$ with respect to the standard Cartan-Weyl basis $\{e, f, g\}$ can be given as follows:
 \smallskip
 \begin{center}
  \begin{tabular}{r|rrr}
  $[,]$ & $e$  & $f$ & $g$  \\
  \hline
  $e$ & $0$  & $g$  & $-2e$\\
  $f$ & $-g$ & $0$ & $2f$\\
  $g$ &  $2e$ & $-2f$& $0$ \\
   \end{tabular}.
  \end{center}  \smallskip
 Recently Bai, Guo, Liu and the second named author established the bialgebraic throry for Rota-Baxter Lie algebras \cite{BGLM}.

 A {\bf \N Lie algebra} \cite{NR} is a triple $((\ll, [,]), N)$ including a Lie algebra $(\ll, [,])$ and a \N operator $N$ on $\ll$, i.e., a linear map $N: \ll\lr \ll$ satisfies
 \begin{eqnarray}\label{eq:na}\label{eq:n}
 [N(x), N(y)]+N^2[x, y]=N[N(x), y]+N[x, N(y)],~\forall~x, y\in \ll.
 \end{eqnarray}
 Nijenhuis Lie algebras have applied to finite or infinite integrable systems, cohomology and deformations theory, almost complex structures, Nijenhuis geometry, etc. 

 A Lie bialgebra is composed of a Lie algebra and a Lie coalgebra together with a cocycle condition, which is introduced in the study of Hamiltonian mechanics and Poisson-Lie groups \cite{Dr}. Lie bialgebras can be characterized by Manin triples of Lie algebras. The notion of classical $r$-matrix was introduced in the classical inverse scattering method as the natural foundation of the Hamiltonian approach. Semenov-Tian-Shansky first gave the relation between classical $r$-matrix and Rota-Baxter operator of weight 0 \cite{STS,Bai2}. Nijenhuis operators and Rota-Baxter operators are similar in form. Following the discussion above, there are two interesting questions: When a Lie (co)algebra is Nijenhuis? Are there suitable compatible conditions between classical $r$-matrix and Nijenhuis operators? 

 In this paper, we will discuss the questions above. 

 Section \mref{se:main} is devoted to finding that Lie algebras are very closely related to Nijenhuis operators. In subsection \ref{se:qt}, all the quasitriangular structures on $\ss$ are provided (see Example \ref{ex:qt}). In subsection \ref{se:symp}, we present all the weak symplectic structures on $\ss$ (see Example \ref{ex:symp}) and prove that a dual quasitriangular Lie bialgebra can give rise to a weak symplectic Lie algebra (see Proposition \ref{pro:caybetosymp}). On the basis of the previous preparations, in subsection \ref{se:n}, we obtain that a weak symplectic Lie algebra together with both a quasitriangular and a dual quasitriangular Lie bialgebra will possess Nijenhuis operators (see Theorem \ref{thm:n}). And meanwhile we apply the result in Theorem \ref{thm:n} to $\ss$ (see Example \ref{ex:nsl2}). In subsection \ref{se:nsl2}, we give all the Nijenhuis operators on $\ss$ with respect to the standard Cartan-Weyl basis (see Theorem \ref{thm:nsl2}).

 In order to establish the bialgebraic structure on a \N Lie algebra, in Section \ref{se:rep}, we present some useful explorations. In subsection \ref{se:drep}, we introduce the notion of the \rep of a \N Lie algebra (see Definition \ref{de:repnlie}) and study the dual \rep (see Lemma \ref{lem:dualrep}). In subsection \ref{se:cdrep}, we present the concepts of a \N Lie coalgebra and its co\rep through compatible Lie coalgebras (see Theorem \ref{thm:comcn}, Definitions \ref{de:nc} and \ref{de:ncm}). In subsection \ref{se:con}, we consider the dual case of Theorem \ref{thm:n} (see Theorem \ref{thm:con}), which provides a method to obtain \N Lie coalgebras.

 In Section \ref{se:nliebialg}, we focus on the structure of \N Lie bialgebras. In subsection \ref{se:equi}, we introduce the notion of \N Lie bialgebras (see Definition \ref{de:nliebialg}) and also give the equivalent characterizations by using matched pairs of Nijenhuis Lie algebras (see Theorem \ref{thm:matchandnlieb}) and Manin triples of Nijenhuis Frobenius Lie algebra (see Theorem \ref{thm:matchandmanin}), which are all compatible with the case of Lie algebras. Some examples of \N Lie bialgebras on $\ss$ are presented (see Example \ref{ex:nliebialg}). In subsection \ref{se:cybe}, we consider the quasitriangular case. Quasitriangular \N Lie bialgebras and classical $P$-Nijenhuis Yang-Baxter equations are defined (see Theorem \ref{thm:nqt} and Definition \ref{de:cybe-1}). We introduce the notion of relative Rota-Baxter operators and prove that it can be used to construct \N Lie bialgebras (see Corollary \ref{cor:rbo}), especially on semi-direct product \N Lie algebras (see Theorem \ref{thm:rboliebialg}).

 \smallskip
 \noindent{\bf Notations:} Throughout this paper, all vector spaces, tensor products, and linear homomorphisms are over the complex number field $\mathbb{C}$. We denote by $\id_M$ the identity map from $M$ to $M$.

\section{Nijenhuis operators from weak symplectic Lie algebras}\label{se:main}
 In order to obtain the main result in subsection \ref{se:n}, we need the following preparations in subsections \ref{se:qt} and \ref{se:symp}. We note that in Theorem \ref{thm:n} a compatible condition between the solution of classical Yang-Baxter equation and the solution ofclassical co-Yang-Baxter equation is needed.    \vskip-183mm
 \subsection{Quasitriangular Lie bialgebras}\label{se:qt} Let us recall from \cite{Dr} the definition of Lie bialgebra. A {\bf Lie coalgebra} is a pair $(\ll, \d)$, where $\ll$ is a vector space and $\d:\ll\lr \ll\o \ll$ (we use the Sweedler notation \cite{Ra12}: write $\d(x)=x_{(1)}\o x_{(2)}$ ($=\sum x_{(1)}\o x_{(2)}$)) is a linear map such that for all $x\in\ll$,        \vskip-6mm
 \begin{eqnarray}
 &x_{(1)}\o x_{(2)}=-x_{(2)}\o x_{(1)},&\label{eq:coanti}\\
 &x_{(1)}\o x_{(2)(1)}\o x_{(2)(2)}+x_{(2)(1)}\o x_{(2)(2)}\o x_{(1)}+ x_{(2)(2)}\o x_{(1)}\o x_{(2)(1)}=0.&\label{eq:coj}
 \end{eqnarray}

 A {\bf Lie bialgebra} is a triple $(\ll, [,], \d)$, where the pair $(\ll, [,])$ is a Lie algebra, and the pair $(\ll, \d)$ is a Lie coalgebra, such that, for all $x, y\in \ll$,
 \begin{eqnarray}
 &\d([x, y])=[x, y_{(1)}]\o y_{(2)}+ y_{(1)}\o [x, y_{(2)}]-[y, x_{(1)}]\o x_{(2)}-x_{(1)}\o [y, x_{(2)}].& \label{eq:liebialg}
 \end{eqnarray}

 For a given element $r=r^1\o r^2\in \ll\o \ll$, define
 \begin{equation}
 \d(x):=\d_r(x):=r^{1}\o [x, r^{2}]+[x, r^{1}]\o r^{2}, \quad \forall x\in \ll.\label{eq:p}
 \end{equation}

 \begin{rmk}\label{rmk:qt}\cite{Maj} Let $(\ll, [,])$ be a Lie algebra and $r\in \ll\o \ll$. If $\d: \ll\lr  \ll\o \ll$ is given by Eq.(\ref{eq:p}), then $\d$ satisfies Eq.(\ref{eq:liebialg}). $(\ll, \d)$ is a Lie coalgebra if and only if for all $x\in \ll$,
 \begin{eqnarray*}
 &(ad_x\o \id+\id\o ad_x)(r+r^\s)=0,&\\
 &(ad_x\o \id\o \id-\id \o ad_x \o \id-\id\o \id\o ad_x)([r_{12}, r_{13}]+[r_{13}, r_{23}]+[r_{12}, r_{23}])=0,&
 \end{eqnarray*}
 where $[r_{12}, r_{13}]=[r^1, \br^1]\o r^2\o \br^2, [r_{13}, r_{23}]=r^1\o \br^1\o [r^2, \br^2], [r_{23}, r_{12}]=r^1\o [r^2, \br^1]\o \br^2$,
 $\br=r$ and $r^\s=r^2\o r^1$.
 \end{rmk}

 \begin{defi}\cite{Maj} \label{de:rmk:cor:4.12} Let $(\ll, [,])$ be a Lie algebra. If $r\in \ll\o \ll$ is an antisymmetric solution (in the sense of $r=-r^\s$) of the {\bf classical Yang-Baxter equation} (cYBe for short) in $(\ll, [,])$:
 \begin{eqnarray}
 &[r_{12}, r_{13}]+[r_{13}, r_{23}]+[r_{12}, r_{23}]=0,&\label{eq:cybe}
 \end{eqnarray}
 then $(\ll, [,], \d)$ is a Lie bialgebra, where the linear map $\d=\d_r$ is defined by Eq.~\meqref{eq:p}. In this case, we call this Lie bialgebra {\bf quasitriangular} and denoted by $(\ll, [,], r, \d_r)$.
 \end{defi}

 By a direct computation, one can get
 \begin{ex}\label{ex:qt} Let $\l$ be a parameter. All the quasitriangular Lie bialgebraic structures on $(\ss, [,])$ are $(\ss, [,]$, $r, \d_r)$, where
 $$
 r=\l (f\o g-g\o f)
 $$
 and
 \begin{center}$ \left\{
        \begin{array}{l}
        \d_r(e)=2 \l (e\o f-f\o e),\\
        \d_r(f)=0, \\
        \d_r(g)=2 \l (g\o f-f\o g)\\
        \end{array}\right. $.
             \end{center}\smallskip
 \end{ex}

 \subsection{Weak symplectic Lie algebras}\label{se:symp} In this subsection, we prove that a dual quasitriangular Lie bialgebra can give rise to a weak symplectic Lie algebra and also derive all the weak symplectic structures on $\ss$.

 \begin{defi}\cite{Maj}\label{de:caybe} Let $(\ll, \d)$ be a Lie coalgebra and $\om\in (\ll\o \ll)^*$. We call the equation below a {\bf classical co-Yang-Baxter equation in $(\ll, \d)$}:
 \begin{equation}\label{eq:caybe}
 \om(x_{(1)}, y)\om(x_{(2)}, z)+\om(x, z_{(1)})\om(y, z_{(2)})+\om(x, y_{(1)})\om(y_{(2)}, z)=0,
 \end{equation}
 where $x, y, z\in \ll$.
 \end{defi}

 \begin{ex}\label{ex:caybe} All the skew-symmetric solutions of the classical co-Yang-Baxter equation in the Lie coalgebra in Example \ref{ex:qt} are given below:
\smallskip
 \begin{center}
  \begin{tabular}{r|rrr}
  $\om$ & $e$  & $f$ & $g$  \\
  \hline
  $e$ & $0$  & $\nu$  & $\kappa$\\
  $f$ & $-\nu$ & $0$ & $\chi$\\
  $g$ &  $-\kappa$ & $-\chi$& $0$ \\
   \end{tabular},
  \end{center}
  where $\nu, \kappa, \chi$ are parameters.
 \end{ex}

 \begin{defi}\cite{Maj} \label{de:cqt} Let $(\ll, \d)$ be a Lie coalgebra and $\om\in (\ll\o \ll)^*$. If $\om$ is a skew-symmetric solution (in the sense of $\om(x, y)=-\om(y, x)$) of the classical co-Yang-Baxter equation, then $(\ll, [,]_\om, \d)$ is a Lie bialgebra, where $[,]_{\om}$ is defined by
 \begin{eqnarray}\label{eq:cp}
 &[x, y]_{\om}=x_{(1)}\om(x_{(2)}, y)-y_{(1)}\om(x, y_{(2)}).&
 \end{eqnarray}
 In this case, we call this Lie bialgebra {\bf dual quasitriangular} and denoted by $(\ll, \d, \om, [,]_{\om})$.
 \end{defi}

 \begin{pro}\label{pro:cqt}\cite{Maj} Let $(\ll, \d)$ be a coalgebra and $\om\in (\ll\o \ll)^*$ skew-symmetric. Then the quadruple $(\ll, \d, \om, [,]_{\om})$, where $[,]_{\om}$ is defined by Eq.(\mref{eq:cp}), is a dual quasitriangular Lie bialgebra if and only if, for all $x, y, z\in \ll$,
 \begin{equation}\label{eq:cqt1}
 \om([x ,y]_{\om}, z)=\om(x, z_{(1)})\om(y, z_{(2)})
 \end{equation}
 or
 \begin{equation}\label{eq:cqt2}
 \om(x, [y, z]_{\om})=-\om(x_{(1)}, y)\om(x_{(2)}, z)
 \end{equation}
 holds.
 \end{pro}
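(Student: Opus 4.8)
The plan is to unwind the definition and then reduce the whole statement to one sign computation. By Definition~\ref{de:cqt}, together with the standing hypothesis that $\om$ is skew-symmetric, the quadruple $(\ll, \d, \om, [,]_{\om})$ is a dual quasitriangular Lie bialgebra precisely when $\om$ is a solution of the classical co-Yang-Baxter equation \meqref{eq:caybe}; the remaining Lie bialgebra axioms, including the antisymmetry of $[,]_{\om}$ (which follows from the skew-symmetry of $\om$ and the co-antisymmetry \meqref{eq:coanti} of $\d$), are guaranteed by that definition. It therefore suffices to show that, for a skew-symmetric $\om$ on the Lie coalgebra $(\ll, \d)$, each of \meqref{eq:cqt1} and \meqref{eq:cqt2} is equivalent to \meqref{eq:caybe}.

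First I would prove the equivalence of \meqref{eq:caybe} with \meqref{eq:cqt1}. Substituting the definition \meqref{eq:cp} of the bracket and using the bilinearity of $\om$ expresses the left-hand side of \meqref{eq:cqt1} as a combination of $\om(x_{(1)}, z)\,\om(x_{(2)}, y)$ and $\om(y_{(1)}, z)\,\om(x, y_{(2)})$. Applying the co-antisymmetry \meqref{eq:coanti} to $\d(x)$ rewrites the first of these as a scalar multiple of $\om(x_{(1)}, y)\om(x_{(2)}, z)$, while \meqref{eq:coanti} applied to $\d(y)$ together with the skew-symmetry of $\om$ rewrites the second as a scalar multiple of $\om(x, y_{(1)})\om(y_{(2)}, z)$; these are the first and third terms of \meqref{eq:caybe}. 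Since the right-hand side of \meqref{eq:cqt1} is exactly the second term $\om(x, z_{(1)})\om(y, z_{(2)})$ of \meqref{eq:caybe}, collecting signs turns \meqref{eq:cqt1} into the vanishing of the sum of the three terms of \meqref{eq:caybe}. Note that only \meqref{eq:coanti} and the skew-symmetry of $\om$ are needed here, not co-Jacobi \meqref{eq:coj}.

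Next I would obtain the equivalence of \meqref{eq:cqt1} with \meqref{eq:cqt2} without any further expansion, using only the skew-symmetry of $\om$ and the fact that both identities are required to hold for all arguments. Evaluating \meqref{eq:cqt2} at the triple $(z,x,y)$ gives $\om(z, [x,y]_{\om})=-\om(z_{(1)}, x)\om(z_{(2)}, y)$; applying $\om(a,b)=-\om(b,a)$ to the left-hand side and to each factor on the right transforms this into $\om([x,y]_{\om}, z)=\om(x, z_{(1)})\om(y, z_{(2)})$, which is precisely \meqref{eq:cqt1}, and the substitution is reversible. Chaining the three equivalences shows that $(\ll, \d, \om, [,]_{\om})$ is a dual quasitriangular Lie bialgebra if and only if \meqref{eq:caybe} holds, if and only if \meqref{eq:cqt1} holds, if and only if \meqref{eq:cqt2} holds, which is the assertion.

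The one delicate point I anticipate is the sign bookkeeping in the second paragraph: the rewriting of $\om([x,y]_{\om}, z)$ uses the skew-symmetry of $\om$ and the co-antisymmetry \meqref{eq:coanti} of $\d$ simultaneously but in different tensor slots, so a single misplaced minus sign would produce a spurious variant of \meqref{eq:caybe} rather than \meqref{eq:caybe} itself. Carefully tracking, for each summand, which functional argument and which coproduct leg carries each sign is what makes the three terms match correctly.
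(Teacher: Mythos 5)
Your overall architecture is the natural one (the paper, for its part, gives no proof of this proposition at all --- it is cited from \cite{Maj}, so there is nothing internal to compare against): reduce ``dual quasitriangular'' to the co-Yang--Baxter equation \meqref{eq:caybe} via Definition \ref{de:cqt}, prove \meqref{eq:cqt1} $\Leftrightarrow$ \meqref{eq:caybe} by expanding \meqref{eq:cp}, and deduce \meqref{eq:cqt2} from \meqref{eq:cqt1} by the relabelling $(x,y,z)\mapsto(z,x,y)$ together with skew-symmetry of $\om$ --- that last relabelling step of yours is correct exactly as written. But the one step you yourself flagged as delicate, and then did not actually carry out, is precisely where the argument fails. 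Expanding with the paper's printed formula $[x,y]_\om=x_{(1)}\om(x_{(2)},y)-y_{(1)}\om(x,y_{(2)})$ and using only \meqref{eq:coanti}, one gets the identity
\begin{equation*}
\om([x,y]_\om,z)\;=\;-\,\om(x_{(1)},y)\om(x_{(2)},z)\;+\;\om(x,y_{(1)})\om(y_{(2)},z),
\end{equation*}
so \meqref{eq:cqt1} becomes $A+B-C=0$, where $A,B,C$ denote the three summands of \meqref{eq:caybe} in order --- \emph{not} $A+B+C=0$. These conditions are genuinely different: using skew-symmetry and \meqref{eq:coanti} one has $A(x,y,z)=B(y,z,x)$ and $C(x,y,z)=B(z,x,y)$ with $B(x,y,z)=\om(x,z_{(1)})\om(y,z_{(2)})$, so imposing $A+B-C=0$ at all arguments and adding two cyclic shifts forces $B\equiv 0$, i.e.\ $\om(x,z_{(1)})\om(y,z_{(2)})=0$ identically, which is far stronger than \meqref{eq:caybe}. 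Concretely, with the data of Example \ref{ex:nsl2} on $\ss$ (where $\d=\d_r$, $r=\l(f\o g-g\o f)$, $\om(e,g)=\kappa$, $\om(f,g)=-2\kappa$, $\om(e,f)=0$), one computes $[e,g]_\om=-4\l\kappa e$, hence $\om([e,g]_\om,g)=-4\l\kappa^2$, while $\om(e,g_{(1)})\om(g,g_{(2)})=+4\l\kappa^2$: here \meqref{eq:caybe} holds but \meqref{eq:cqt1} fails.

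The source of the trouble --- which a careful execution of your plan would have detected, and which your opening paragraph silently contradicts --- is a sign typo in \meqref{eq:cp}: with the printed minus sign and $\om$ skew-symmetric, the bracket is \emph{symmetric}, $[y,x]_\om=[x,y]_\om$ (flip both arguments of each $\om$), so your parenthetical claim that antisymmetry of $[,]_\om$ follows from skew-symmetry of $\om$ and \meqref{eq:coanti} is false as stated, and $(\ll,[,]_\om,\d)$ could not be a Lie bialgebra at all. The consistent convention, dual to \meqref{eq:p}, is $[x,y]_\om=x_{(1)}\om(x_{(2)},y)+y_{(1)}\om(x,y_{(2)})$; with it the bracket is antisymmetric, the identity above becomes $\om([x,y]_\om,z)=-(A+C)$, and then \meqref{eq:cqt1} $\Leftrightarrow A+B+C=0 \Leftrightarrow$ \meqref{eq:caybe}, after which your entire chain, including the \meqref{eq:cqt1}/\meqref{eq:cqt2} relabelling, goes through verbatim. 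So the strategy is sound, but by leaving the signs as unverified ``scalar multiples'' you both asserted an intermediate identity that is false under the paper's printed conventions and missed the correction of \meqref{eq:cp} under which the proposition is actually true.
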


 Next we introduce the notion of a weak symplectic Lie algebra, here we ignore the {\bf nondegeneracy} of symplectic structure given in \cite{BBM}.
 \begin{defi}\label{de:cosymp} Let $(\ll, [,])$ be a Lie algebra and $\omega \in (\ll \o \ll)^*$ be skew-symmetric. If for all $x, y, z \in \ll$, the equation below holds:
 \begin{eqnarray}
 \om([x ,y], z)+\om([y ,z], x)+\om([z ,x], y)=0, \label{eq:cosymp}
 \end{eqnarray}
 then $(\ll, [,])$ is a {\bf weak symplectic Lie algebra} and denoted by $(\ll, [,], \omega)$.
  \end{defi}

 \begin{ex}\label{ex:symp} All the weak symplectic structures on $(\ss, [,])$ are given below:
\smallskip
 \begin{center}
  \begin{tabular}{r|rrr}
  $\om$ & $e$  & $f$ & $g$  \\
  \hline
  $e$ & $0$  & $0$  & $\kappa$\\
  $f$ & $0$ & $0$ & $-2\kappa$\\
  $g$ &  $-\kappa$ & $2\kappa$& $0$ \\
   \end{tabular},
  \end{center}
  where $\kappa$ is a parameter.

 \end{ex}

 A weak symplectic Lie algebra can be obtained by a dual quasitriangular Lie bialgebra.
 \begin{pro}\label{pro:caybetosymp} If $(\ll, \d, \om, [,]_{\om})$, where $[,]_{\om}$ is defined by Eq.(\mref{eq:cp}), is a dual quasitriangular Lie bialgebra, then $(\ll, [,]_{\om}, \om)$ is a weak symplectic Lie algebra.
 \end{pro}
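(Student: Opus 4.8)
The plan is to verify the three defining conditions of a weak symplectic Lie algebra from Definition~\ref{de:cosymp} for the triple $(\ll, [,]_\om, \om)$. The first two are immediate: since $(\ll, \d, \om, [,]_\om)$ is a dual quasitriangular Lie bialgebra, the pair $(\ll, [,]_\om)$ is in particular a Lie algebra, and $\om$ is skew-symmetric by hypothesis. Thus the only substantive task is to establish the cyclic identity~\eqref{eq:cosymp}, namely
\begin{equation*}
\om([x, y]_\om, z) + \om([y, z]_\om, x) + \om([z, x]_\om, y) = 0, \qquad \forall\, x, y, z \in \ll.
\end{equation*}

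First I would invoke Proposition~\ref{pro:cqt}, which tells us that, because $(\ll, \d, \om, [,]_\om)$ is dual quasitriangular, the identity~\eqref{eq:cqt1}, $\om([a, b]_\om, c) = \om(a, c_{(1)})\om(b, c_{(2)})$, holds for all $a, b, c \in \ll$. Applying this to each of the three summands above converts the cyclic sum into
\begin{equation*}
\om(x, z_{(1)})\om(y, z_{(2)}) + \om(y, x_{(1)})\om(z, x_{(2)}) + \om(z, y_{(1)})\om(x, y_{(2)}).
\end{equation*}
The goal is then to recognize this as a rearrangement of the left-hand side of the classical co-Yang-Baxter equation~\eqref{eq:caybe}, which vanishes because $\om$ is a solution of it.

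The matching of terms is where the real work lies, and it is carried out using skew-symmetry of $\om$ together with the coantisymmetry~\eqref{eq:coanti} of $\d$. The middle term $\om(x, z_{(1)})\om(y, z_{(2)})$ already coincides with the middle term of~\eqref{eq:caybe}. For the second summand, two applications of skew-symmetry give $\om(y, x_{(1)})\om(z, x_{(2)}) = \om(x_{(1)}, y)\om(x_{(2)}, z)$, matching the first term of~\eqref{eq:caybe}. The third summand is the delicate one: here I would first apply coantisymmetry to the coproduct of $y$ to swap its two legs at the cost of a sign, and then apply skew-symmetry on the remaining slot, transforming $\om(z, y_{(1)})\om(x, y_{(2)})$ into $\om(x, y_{(1)})\om(y_{(2)}, z)$, the third term of~\eqref{eq:caybe}. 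Summing, the cyclic expression becomes exactly the left-hand side of~\eqref{eq:caybe} and hence is zero.

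I expect the main obstacle to be purely notational: keeping the Sweedler indices and the two distinct antisymmetries (of $\om$ in its two arguments and of $\d$ in its output) properly aligned, particularly in the third summand where both are needed simultaneously. No further structural input is required, since the dual quasitriangular hypothesis already packages both the co-Yang-Baxter relation~\eqref{eq:caybe} and the identity~\eqref{eq:cqt1}; equivalently one could run the same argument through~\eqref{eq:cqt2}, the two characterizations being interchangeable under skew-symmetry of $\om$.
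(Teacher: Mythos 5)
Your proposal is correct and follows essentially the same route as the paper's own proof: apply Eq.~(\ref{eq:cqt1}) from Proposition~\ref{pro:cqt} to each summand of the cyclic expression, then use the coantisymmetry~(\ref{eq:coanti}) together with the skew-symmetry of $\om$ to rewrite the result as the left-hand side of the classical co-Yang-Baxter equation~(\ref{eq:caybe}), which vanishes. Your term-by-term bookkeeping (including the sign cancellations in the second summand and the combined use of both antisymmetries in the third) matches the paper's computation exactly.
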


 \begin{proof} By Proposition \ref{pro:cqt}, we know that $(\ll, [,]_{\om})$ is a Lie algebra. For all $x, y, z\in \ll$, we have
 \begin{eqnarray*}
 &&\hspace{-13mm}\om([x, y]_{\om}, z)+\om([y, z]_{\om}, x)+\om([z, x]_{\om}, y)\\
 &\stackrel{(\ref{eq:cqt1})}{=}&\om(x, z_{(1)})\om(y, z_{(2)})+\om(y, x_{(1)})\om(z, x_{(2)})+\om(z, y_{(1)})\om(x, y_{(2)})\\
 &\stackrel{(\ref{eq:coanti})}{=}&\om(x_{(1)}, y)\om(x_{(2)}, z)+\om(x, z_{(1)})\om(y, z_{(2)})+\om(x, y_{(1)})\om(y_{(2)}, z)\\
 &&\hspace{10mm} \hbox{~(by the skew-symmetry of } \om)\\
 &\stackrel{(\ref{eq:caybe})}{=}&0,
 \end{eqnarray*}
 as we needed.
 \end{proof}

\subsection{When Lie algebras are Nijenhuis}\label{se:n} A Nijenhuis Lie algebra can be investigated by the following way.

 \begin{thm}\label{thm:n} Let $(\ll, [,], \om)$ be a weak symplectic Lie algebra, $r$ be an element in $\ll\o \ll$ such that $(\ll, [,], r, \d_r)$ is a quasitriangular Lie bialgebra. If further, $(\ll, \d_r, \om, [,]_{\om})$ is a dual quasitriangular Lie bialgebra, then $((\ll, [,]), N)$ is a \N Lie algebra, where the \N operator $N: \ll\lr \ll$ is defined by
 \begin{eqnarray}
 &N(x)=\om(x, r^1)r^2,~~\forall~x\in \ll.&\label{eq:ys}
 \end{eqnarray}
 \end{thm}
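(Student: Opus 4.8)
My plan is to verify the Nijenhuis identity \meqref{eq:n} directly by unwinding the definition $N(x)=\om(x, r^1)r^2$ and reducing everything to the three available compatibility hypotheses: the quasitriangular structure (so $\d_r$ is given by \meqref{eq:p} and $r$ solves \meqref{eq:cybe}), the dual quasitriangular structure of $(\ll,\d_r,\om,[,]_\om)$ (which gives me the identities \meqref{eq:cqt1} and \meqref{eq:cqt2}), and the weak symplectic identity \meqref{eq:cosymp}. The first thing I would do is compute $N$ more explicitly against the coproduct: since $\d_r(x)=r^1\o[x,r^2]+[x,r^1]\o r^2$ and $\om$ is skew-symmetric, I expect $N(x)=\om(x,r^1)r^2$ to be expressible through the bracket $[,]_\om$ defined in \meqref{eq:cp}. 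Indeed, comparing \meqref{eq:cp} with \meqref{eq:ys} suggests that $N$ is essentially the operator sending $x$ to the ``$\om$-contraction of $x$ against the first leg of $r$'', and I would first establish a clean intermediate formula such as $\om(N(x),y)=\om(x,\text{(something)})$ or a relation tying $[N(x),y]$, $[x,N(y)]$ and $[x,y]_\om$ together.

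Once I have such a bridge, the key step is to expand each of the four terms in \meqref{eq:n}, namely $[N(x),N(y)]$, $N^2[x,y]$, $N[N(x),y]$ and $N[x,N(y)]$, by repeatedly substituting $N(\cdot)=\om(\cdot,r^1)r^2$ and pushing the scalar factors $\om(-,r^1)$ outside the brackets using bilinearity and the Lie algebra axioms. The goal is to rewrite the whole left-minus-right side of \meqref{eq:n} as a single expression in which the bracket of two copies of $r$ appears, so that the cYBe \meqref{eq:cybe} (in the component form $[r_{12},r_{13}]+[r_{13},r_{23}]+[r_{12},r_{23}]=0$ recorded in Remark \ref{rmk:qt}) can be invoked to annihilate the leading terms. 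The surviving terms should then be exactly those governed by the weak symplectic condition \meqref{eq:cosymp} and the dual quasitriangular identities \meqref{eq:cqt1}--\meqref{eq:cqt2}, and I would match them against those identities to conclude that the total vanishes.

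The main obstacle I anticipate is the bookkeeping of the two independent copies of $r$ (writing $\br=r$ and tracking which $\om$-contraction attaches to $r^1,r^2,\br^1,\br^2$) together with the antisymmetry $r=-r^\s$; the four terms of \meqref{eq:n} will each split into several summands, and the difficulty is organizing them so that the cYBe can be applied as a block rather than term-by-term. A secondary subtlety is justifying the interchange between the defining formula \meqref{eq:ys} for $N$ and the bracket $[,]_\om$: I must use the \emph{skew-symmetry} of $\om$ and the hypothesis that $(\ll,\d_r,\om,[,]_\om)$ is dual quasitriangular, since it is precisely that hypothesis which lets me replace contractions of $\om$ against $\d_r$ by expressions in $[,]_\om$ via \meqref{eq:cqt1} and \meqref{eq:cqt2}. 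If the direct expansion proves too unwieldy, an alternative is to first prove the cleaner identity $N([x,y]_\om)=[N(x),y]_\om$ or a similar ``$N$ is a morphism'' statement and then deduce \meqref{eq:n}, but I expect the direct substitution, carefully grouped around \meqref{eq:cybe}, to be the most transparent route.
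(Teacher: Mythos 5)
Your plan coincides with the paper's proof: the paper also verifies the Nijenhuis identity by direct expansion of $N(x)=\om(x,r^1)r^2$, first extracting from the dual quasitriangular hypothesis (the classical co-Yang-Baxter equation \meqref{eq:caybe} for $\om$, expanded through the explicit form \meqref{eq:p} of $\d_r$) an intermediate identity purely in $\om$, $r$ and $[,]$ (the paper's Eq.~\meqref{eq:73}), and then finishing exactly as you describe with the weak symplectic condition \meqref{eq:cosymp}, the antisymmetry of $r$, and the cYBe \meqref{eq:cybe} applied last to kill the $[r^2,\bar{r}^2]$-type terms. The only cosmetic difference is that you would route the dual quasitriangular data through \meqref{eq:cqt1}--\meqref{eq:cqt2} and a bridge to $[,]_\om$, whereas the paper uses the equivalent co-cYBe directly and never needs $[,]_\om$ in the computation.
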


 \begin{proof} Because $(\ll, \d_r, \om, [,]_{\om})$ is a dual quasitriangular Lie bialgebra, where $\d_r$ is defined by Eq.(\ref{eq:p}), and $(\ll, [,], \om)$ is a weak symplectic Lie algebra, one has
 \begin{eqnarray}\label{eq:73}
 0&=&\om(x_{(1)}, y)\om(x_{(2)}, z)+\om(x, z_{(1)})\om(y, z_{(2)})+\om(x, y_{(1)})\om(y_{(2)}, z)\nonumber\\
 &=&\omega(r^1, y)\omega([x, r^2], z)+\omega([x, r^1], y)\omega(r^2, z)+\omega(x, r^1)\omega(y, [z, r^2])\nonumber\\
 &&+\omega(x, [z, r^1])\omega(y, r^2)+\omega(x, r^1)\omega([y, r^2], z)+\omega(x, [y, r^1])\omega(r^2, z)\nonumber\\
 &=&\omega(r^1, y)\omega([x, r^2], z)+\omega(r^1, y)\omega([r^2, z], x)+\omega(x, r^1)\omega([r^2, z], y)\nonumber\\
 &&+\omega(x, r^1)\omega([y, r^2], z)+\omega([x, r^1], y)\omega(r^2, z)+\omega([r^1, y], x)\omega(r^2, z)\nonumber\\
 &&\hspace{3mm} \hbox{~(by the skew-symmetry of } \om, \hbox{the antisymmetry of } r \hbox{ and } Eq.(\ref{eq:anti}))\nonumber\\
 &=&\omega(y, r^1)\omega([z, x], r^2)+\omega(x, r^1)\omega(r^2, [y, z])+\omega([x, y], r^1)\omega(r^2, z)\\
 &&\hspace{3mm} \hbox{~(by the skew-symmetry of } \om, \hbox{ and } Eq.(\ref{eq:anti})).\nonumber
 \end{eqnarray}

 In the rest, we check that the linear map $N$ defined in Eq.(\mref{eq:ys}) is a Nijenhuis operator on $(\ll, [,])$.
 \begin{eqnarray*}
 &&\hspace{-15mm}[N(x), N(y)]-N([N(x), y]+[x, N(y)]-N([x, y]))\\
 &=&\omega(x,r^1)\omega(y,\bar{r}^1)[r^2,\bar{r}^2]-\omega(x,r^1)\omega([r^2, y],\bar{r}^1)\bar{r}^2-\omega(y,r^1)\omega([x, r^2] ,\bar{r}^1)\bar{r}^2\\
 &&+\omega([x, y],r^1)\omega( r^2 ,\bar{r}^1)\bar{r}^2\\
 &\stackrel{(\ref{eq:73})}{=}&\omega(x,r^1)\omega(y,\bar{r}^1)[r^2,\bar{r}^2]
 -\omega(x,r^1)\omega([r^2, y],\bar{r}^1)\bar{r}^2-\omega(y,r^1)\omega([x, r^2] ,\bar{r}^1)\bar{r}^2\\
 &&+\omega([x, y],r^1)\omega( r^2 ,\bar{r}^1)\bar{r}^2-\omega(x,r^1)\omega([y,\bar{r}^1], r^2)\bar{r}^2-\omega(y,r^1)\omega([\bar{r}^1,x] ,r^2)\bar{r}^2\\
 &&-\omega(r^2,\bar{r}^1)\omega([x,y],r^1)\bar{r}^2\\
 &\stackrel{(\ref{eq:cosymp})}{=}&\omega(x,r^1)\omega(y,\bar{r}^1)[r^2,\bar{r}^2]
 +\omega(x, r^1)\omega([\bar{r}^1,r^2], y)\bar{r}^2+\omega(y, r^1)\omega([r^2,\bar{r}^1], x)\bar{r}^2\\
 &\stackrel{}{=}&\omega(x, r^1)\omega(y, \bar{r}^1)[r^2,\bar{r}^2]+\omega(x,r^1)\omega( y,[r^2,\bar{r}^1])\bar{r}^2+\omega(x, [r^1,\bar{r}^1])\omega(y, r^2)\bar{r}^2 \\
 &&\qquad(\hbox{by the anti-symmetry of $r$})\\
 &\stackrel{(\ref{eq:cybe})}{=}&0.
 \end{eqnarray*}
 Thus, $((\ll, [,]), N)$ is a \N Lie algebra. These finish the proof.
 \end{proof}

 Next we apply Theorem \ref{thm:n} to $(\ss, [,])$.
 \begin{ex}\label{ex:nsl2} Based on Examples \ref{ex:qt}, \ref{ex:caybe}, \ref{ex:symp}, set $r=\l (f\o g-g\o f)$,\smallskip
 \begin{center}$ \left\{
        \begin{array}{l}
        \d_r(e)=2 \l (e\o f-f\o e),\\
        \d_r(f)=0, \\
        \d_r(g)=2 \l (g\o f-f\o g)\\
        \end{array}\right. $,
             \end{center}\smallskip
 \begin{center}
  \begin{tabular}{r|rrr}
  $\om$ & $e$  & $f$ & $g$  \\
  \hline
  $e$ & $0$  & $0$  & $\kappa$\\
  $f$ & $0$ & $0$ & $-2\kappa$\\
  $g$ &  $-\kappa$ & $2\kappa$& $0$ \\
   \end{tabular},
             \end{center}\smallskip
  then $(\ss, [,], r, \d_r)$ is a quasitriangular Lie bialgebra, $(\ss, \d_r, \om, [,]_{\om})$ is a dual quasitriangular Lie bialgebra and $(\ss, [,], \om)$ is a weak symplectic Lie algebra. Therefore all the possible Nijenhuis operators $N: \ss\lr \ss$ induced by Theorem \ref{thm:n} on $(\ss, [,])$ are given below:
  \begin{center}
    $\left\{
        \begin{array}{l}
        N(e)=-\l \kappa f,\\
        N(f)=2 \l \kappa f, \\
        N(g)=2 \l \kappa g,\\
        \end{array}\right. $
    \end{center}
   where $\l, \kappa$ are parameters.
 \end{ex}

\subsection{Nijenhuis operators on $\ss$} \label{se:nsl2} In this subsection, we give all the Nijenhuis operators on $\ss$ with respect to the standard Cartan-Weyl basis.

 \begin{thm}\label{thm:nsl2} Let $\{e, f, g\}$ be the standard Cartan-Weyl basis and the map $N:\ss\lr \ss$ given by
    \begin{center}$N(e, f, g):=(e,f,g)\left(
                    \begin{array}{ccc}
                      k_1 & k_4 & k_7 \\
                      k_2 & k_5 & k_8 \\
                      k_3 & k_6 & k_9 \\
                    \end{array}
                  \right),$
    \end{center}
    where $k_i, i=1, 2, \cdots, 9$ are parameters. Then all the Nijenhuis operators $N$ on $(\ss, [,])$ are given as follows.
    \begin{center}$(1)~\left(
                    \begin{array}{ccc}
                      k_1 & k_4 & 0 \\
                      0 & k_5 & -4 k_3\\
                      k_3 &k_6 & k_1 \\
                    \end{array}
                  \right),$ \quad\smallskip
                  $(2)~\left(
                    \begin{array}{ccc}
                      k_1 & 0 & 0 \\
                      0 & k_5 & 0\\
                      k_3 &\frac{(k_9-k_1)(k_5-k_9)}{4 k_3} & k_9 \\
                    \end{array}
                  \right)~(k_3\neq 0),$  \quad \smallskip
                  $(3)~\left(
                    \begin{array}{ccc}
                      k_1 & & \\
                      k_2 & k_5 & k_8\\
                      \frac{k_2(k_9-k_5)}{ k_8}-\frac{k_8}{4} &\frac{k_8(k_1-k_9)}{4 k_2}  & k_9 \\
                    \end{array}
                  \right)~(k_2,k_8\neq 0),$\quad\smallskip
                  $(4)\left(
                    \begin{array}{ccc}
                      k_1 & 0 & -4 k_6\\
                      k_2 & k_5 & 0 \\
                      k_3& k_6  & k_5 \\
                    \end{array}
                  \right),$ \quad \smallskip
                  $(5)\left(
                    \begin{array}{ccc}
                      k_1 & k_4 & k_7 \\
                      \frac{k_3 k_7}{k_4}  & k_5 &-4 k_3\\
                      k_3& \frac{k_4(k_5-k_1)}{k_7}-\frac{k_7}{4} & k_5 \\
                    \end{array}
                  \right)~(k_4, k_7\neq 0),$ \quad\smallskip
                  $(6)~\left(
                    \begin{array}{ccc}
                      k_1 & 0 & 0 \\
                      0 & k_5 & 0\\
                      k_3 &k_6 & k_5 \\
                    \end{array}
                  \right)~(k_5\neq k_1),$ \quad \smallskip
                  $(7)~\left(
                    \begin{array}{ccc}
                      k_1 & k_4 & k_7 \\
                      k_2  & k_9+\frac{4 k_3 k_4}{k_7}-\frac{4 k_2 k{_4}^2}{k{_7}^2} & -\frac{4 k_2 k_4}{k_7}\\
                      k_3 & \frac{k_4(k_9-k_1)}{k_7}-\frac{k_7}{4}  & k_9 \\
                    \end{array}
                  \right)~(k_4, k_7\neq 0, k_2 \neq \frac{k_3 k_7}{k_4}).$
    \end{center}
    \end{thm}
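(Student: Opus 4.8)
The plan is to carry out a direct but carefully organized computation: determine exactly which matrices make $N$ a \N operator, then sort the resulting solution set into the seven families. First I would record the action of $N$ on the basis,
\[
N(e) = k_1 e + k_2 f + k_3 g, \quad N(f) = k_4 e + k_5 f + k_6 g, \quad N(g) = k_7 e + k_8 f + k_9 g,
\]
reading the columns of the displayed matrix. The key preliminary observation is that the \N torsion
\[
T(x,y) := [N(x), N(y)] + N^2[x,y] - N[N(x),y] - N[x, N(y)]
\]
is bilinear and antisymmetric in $(x,y)$, so $N$ is a \N operator if and only if $T$ vanishes on the three pairs $(e,f)$, $(e,g)$, and $(f,g)$. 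Evaluating these three torsions and collecting the coefficients of $e$, $f$, $g$ yields a system of nine quadratic polynomial equations in $k_1,\dots,k_9$, using only the multiplication table of $\ss$ from the introduction; this part is entirely mechanical.

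Next I would solve the system. Since every equation is quadratic, I expect the relations to factor and the solution variety to split into components according to which off-diagonal entries vanish. A natural branching is to split first on $k_3 = 0$ versus $k_3 \neq 0$, then within each branch on whether the upper/lower corner entries $k_4, k_7$ (respectively $k_2, k_8$) vanish. In each leaf of this tree the surviving equations should collapse to linear conditions or a single solvable quadratic, fixing one entry in terms of the rest; this is the origin both of the rational expressions such as $\frac{(k_9-k_1)(k_5-k_9)}{4 k_3}$ in family (2) and $\frac{k_8(k_1-k_9)}{4 k_2}$ in family (3), and of the nondegeneracy hypotheses ($k_3 \neq 0$; $k_2, k_8 \neq 0$; $k_4, k_7 \neq 0$; $k_5 \neq k_1$; $k_2 \neq \frac{k_3 k_7}{k_4}$) attached to the individual families. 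Along the way I would also extract the purely diagonal constraints that force, e.g., $k_9 = k_1$ in family (1) and $k_9 = k_5$ in family (4).

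The main obstacle is the bookkeeping of this case analysis rather than any single hard identity: one must verify that the seven families (1)–(7), together with their stated genericity conditions, cover \emph{every} solution and that none is omitted, while accepting the unavoidable overlaps along the boundary loci where several of these conditions degenerate simultaneously. I would control this by fixing one linear order of case splits, checking at each leaf that the remaining equations force exactly the displayed matrix, and then closing the argument with a reverse check—substituting each of the seven families back into $T$ and confirming it vanishes identically—so that the list is simultaneously proved exhaustive and proved to consist only of genuine \N operators.
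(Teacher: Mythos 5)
Your proposal is correct and coincides with the paper's proof, which is stated in full as ``It can be obtained by a direct computation'': your outline (reducing, via bilinearity and antisymmetry of the Nijenhuis torsion, to the three basis pairs $(e,f)$, $(e,g)$, $(f,g)$, solving the resulting quadratic system by case analysis on vanishing entries, and closing with a reverse substitution check) is exactly the direct computation the authors invoke, merely spelled out. Indeed, your write-up is more informative than the paper's one-line proof, since it makes explicit both the source of the genericity conditions and the exhaustiveness obligation that the paper leaves implicit.
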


    \begin{proof} It can be obtained by a direct computation.
    \end{proof}

 \begin{cor}\label{cor:thm:nsl2} The diagonalisable Nijenhuis operator on $(\ss, [,])$ under the standard Cartan-Weyl basis $\{e, f, g\}$ must be
 \begin{center}$\left(
                    \begin{array}{ccc}
                      k_1 & 0 & 0 \\
                      0 & k_5 & 0 \\
                      0 & 0 & k_5 \\
                    \end{array}
                  \right)~(k_5\neq k_1)$ or
                  $\left(
                    \begin{array}{ccc}
                      k_1 & 0 & 0 \\
                      0 & k_5 & 0 \\
                      0 & 0 & k_1 \\
                    \end{array}
                  \right),$
    \end{center}
    where $k_1, k_5$ are parameters.
 \end{cor}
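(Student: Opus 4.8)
The plan is to read the corollary off directly from the classification already established in Theorem \ref{thm:nsl2}. Writing $N$ in the Cartan-Weyl basis $\{e,f,g\}$ as a $3\times 3$ matrix, being diagonal means precisely that every off-diagonal entry vanishes. So I would go through the seven families (1)--(7) one at a time, impose that each entry off the main diagonal is zero, and record which families admit a diagonal member and what that member is.

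First I would dispose of families (2), (3), (5) and (7) at once, since each carries a standing nonvanishing hypothesis on an entry that sits \emph{off} the diagonal: family (2) requires $k_3\neq 0$ in position $(3,1)$, family (3) requires $k_2\neq 0$ in position $(2,1)$, and families (5) and (7) both require $k_4\neq 0$ in position $(1,2)$. None of these can therefore be diagonal, and they contribute nothing to the list.

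Next I would treat the surviving families (1), (4), (6), where diagonality simply forces the free off-diagonal parameters to zero. In family (1) the conditions $k_4=0$, $k_3=0$ (which simultaneously kills the $(2,3)$ entry $-4k_3$) and $k_6=0$ leave $\mathrm{diag}(k_1,k_5,k_1)$, the second matrix of the corollary. In family (4), setting $k_6=0$ (whence the $(1,3)$ entry $-4k_6$ vanishes as well), $k_2=0$ and $k_3=0$ leaves $\mathrm{diag}(k_1,k_5,k_5)$; family (6) with $k_3=k_6=0$ yields the same $\mathrm{diag}(k_1,k_5,k_5)$, now carrying the standing restriction $k_5\neq k_1$, which is the first matrix of the corollary. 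Collecting everything, the only diagonal Nijenhuis operators are $\mathrm{diag}(k_1,k_5,k_5)$ with $k_5\neq k_1$ and $\mathrm{diag}(k_1,k_5,k_1)$ (the latter absorbing the scalar case $k_5=k_1$), which is exactly the claim.

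I do not expect a genuine obstacle here; the work is pure bookkeeping against Theorem \ref{thm:nsl2}. The only point that needs care is not to overlook that the form $\mathrm{diag}(k_1,k_5,k_5)$ arises from two distinct families, (4) and (6), and that the two displayed matrices overlap exactly in the scalar operators $k_1\,\mathrm{Id}$. As an independent and more self-contained check, I would alternatively substitute $N=\mathrm{diag}(a,b,c)$ (that is, $N(e)=ae$, $N(f)=bf$, $N(g)=cg$) straight into the Nijenhuis identity Eq.~\eqref{eq:n}: the pairs $(e,g)$ and $(f,g)$ are satisfied identically, while the pair $(e,f)$, using $[e,f]=g$, collapses to $(a-c)(b-c)=0$. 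This single relation forces $c=a$ or $c=b$, reproducing the two families and confirming the classification without reference to the full list of Theorem \ref{thm:nsl2}.
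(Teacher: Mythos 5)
Your proposal is correct and, for its main line, is essentially the paper's own (implicit) argument: the paper prints no separate proof of the corollary, treating it as bookkeeping against Theorem~\ref{thm:nsl2}, and your case elimination is accurate --- families (2), (3), (5), (7) are killed by their standing nonvanishing hypotheses on the off-diagonal entries $k_3$, $k_2$, $k_4$, while families (1), (4), (6) specialize to $\mathrm{diag}(k_1,k_5,k_1)$ and $\mathrm{diag}(k_1,k_5,k_5)$, with the two displayed matrices overlapping exactly in the scalar operators, as you note. Your supplementary direct verification is also sound and is worth highlighting as the genuinely independent route: with $N=\mathrm{diag}(a,b,c)$, i.e.\ $N(e)=ae$, $N(f)=bf$, $N(g)=cg$, the Nijenhuis identity \eqref{eq:n} holds identically on the pairs $(e,g)$ and $(f,g)$ (both sides equal $-2a(a+c)e$, resp.\ $2b(b+c)f$), and on $(e,f)$, using $[e,f]=g$, it reduces to $ab+c^2=(a+b)c$, that is $(c-a)(c-b)=0$, which yields precisely the two stated families; this three-line computation is more self-contained than relying on the full (unproved-in-print) classification of Theorem~\ref{thm:nsl2}, at the cost of establishing only the diagonal case rather than recovering it from the general one.
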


 In Theorem \ref{thm:nsl2}, we give all the Nijenhuis operators on $(\ss, [,])$. Now we discuss the relation between Nijenhuis operators in Example \ref{ex:nsl2} and in Theorem \ref{thm:nsl2}.
 \begin{rmk}\label{rmk:nsl2}
  \begin{enumerate}[(1)]
    \item \label{it:rmk:nsl2-1} Comparison between Nijenhuis operators in Example \mref{ex:nsl2} and Nijenhuis operators in Theorem \ref{thm:nsl2}.
 \begin{center}\vskip-6mm
  \begin{tabular}[t]{c|c}
         \hline\hline
        \bf{Nij operators in Example \mref{ex:nsl2}}  & \bf{Nij operators in Theorem \ref{thm:nsl2}}\\
        \hline
         $\left\{
            \begin{array}{l}
        N(e)=-\l \kappa f,\\
        N(f)=2 \l \kappa f, \\
        N(g)=2 \l \kappa g,\\
            \end{array}
            \right.$
        & $k_1=k_3=k_6=0, k_2=-\l \kappa, k_5=2\l \kappa$ in Case (4)\\
        \hline
    \end{tabular}
    \end{center}\smallskip
  \item \label{it:rmk:nsl2-2} Based on Item \ref{it:rmk:nsl2-1}, the Nijenhuis operators on $\ss$ obtained by the methods in Theorem \ref{thm:n} are only a part of them given in Theorem \ref{thm:nsl2}, therefore the conditions in Theorem \ref{thm:n} are sufficient and not necessary.
  \end{enumerate}
 \end{rmk}

\section{(Co)Representations of Nijenhuis Lie (co)algebras}\label{se:rep} In this section, we discuss the representation of Nijenhuis Lie algebras and provide a construction of Nijenhuis operators on a Lie coalgebra.

\subsection{Dual representations}\label{se:drep}Let $(\ll, [,])$ be a Lie algebra. A {\bf representation of $(\ll, [,])$} is a pair $(V, \rho)$ where $V$ is a vector space, $\rho: \ll\lr End(V)$ is a linear map such that, for all $x, y\in \ll$ and $v\in V$,
 \begin{eqnarray}
 &\rho([x, y])(v)=\rho(x)(\rho(y)v)-\rho(y)(\rho(x)v).&\label{eq:replie}
 \end{eqnarray}

 \begin{defi}\label{de:repnlie} If $((\ll, [,]), N)$ is a \N Lie algebra, then a {\bf representation of $((\ll, [,]), N)$} is a triple $((V, \rho), \a)$ consisting of a representation $(V, \rho)$ of $(\ll, [,])$ and a linear map $\a: V\lr V$ such that, for all $x\in \ll$ and $v\in V$,
 \begin{eqnarray}
 &\rho(N(x))\a(v)+\a^2(\rho(x)v)=\a(\rho(N(x))v)+\a(\rho(x)\a(v)).&\label{eq:repnlie}
 \end{eqnarray}
 \end{defi}

 \begin{ex}\label{ex:adrepnlie} Let $((\ll, [,]), N)$ be a \N Lie algebra and $x\in \ll$. Define a linear map $ad: \ll\lr End(\ll)$ by $ad_x y=[x, y]$, then $((\ll, ad), N)$ is a representation of $((\ll, [,]), N)$, called {\bf adjoint representation of $((\ll, [,]), N)$}.
 \end{ex}

 A representation of a \N Lie algebra can be interpreted by a semi-direct product \N Lie algebra.

 \begin{pro}\label{pro:repnlie} Let $((\ll, [,]), N)$ be a \N Lie algebra, $(V, \rho)$ be a representation of $(\ll, [,])$ and $\a: V\lr V$ a linear map. For all $x, y\in \ll$ and $u, v\in V$, define a bracket on $\ll\oplus V$ by
 \begin{eqnarray}
 &[x+u, y+v]_{\heartsuit}:=[x, y]+\rho(x)v-\rho(y)u&  \label{eq:semip}
 \end{eqnarray}
 and a linear map $N_{\ll\oplus V}: \ll\oplus V\to  \ll\oplus V$ by
 \begin{eqnarray}
 &N_{\ll\oplus V}(x+v):=(N+\a)(x+v)=N(x)+\a(v).&\label{eq:nsemip}
 \end{eqnarray}
 Then $\ll\oplus V$ equipped with the bracket product (\ref{eq:semip}) and the linear map (\ref{eq:nsemip}) turns to be a \N Lie algebra, which is denoted by $(\ll\ltimes^{\rho} V, N+\a)$, if and only if $((V, \rho), \a)$ is a representation of $((\ll, [,]), N)$.
 \end{pro}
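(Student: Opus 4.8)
The plan is to separate the two layers of structure. Since $(V,\rho)$ is by hypothesis a representation of the Lie algebra $(\ll,[,])$, the semi-direct product bracket \eqref{eq:semip} already makes $\ll\oplus V$ a Lie algebra; this is the classical construction and uses only \eqref{eq:replie}, being completely independent of $\a$ and of $N$. Consequently the antisymmetry \eqref{eq:anti} and the Jacobi identity \eqref{eq:j} for $[,]_{\heartsuit}$ need no further verification, and the entire content of the proposition reduces to a single question: does the linear map $N_{\ll\oplus V}=N+\a$ of \eqref{eq:nsemip} satisfy the Nijenhuis equation \eqref{eq:n} on the Lie algebra $\ll\ltimes^{\rho}V$? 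I would therefore prove the equivalence that $N+\a$ is Nijenhuis if and only if \eqref{eq:repnlie} holds, which settles both directions of the ``if and only if'' at once.

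To analyse the Nijenhuis condition, introduce the defect
\begin{align*}
T(X,Y):={}&[N_{\ll\oplus V}X, N_{\ll\oplus V}Y]_{\heartsuit}+N_{\ll\oplus V}^2[X,Y]_{\heartsuit}\\
&-N_{\ll\oplus V}[N_{\ll\oplus V}X, Y]_{\heartsuit}-N_{\ll\oplus V}[X, N_{\ll\oplus V}Y]_{\heartsuit}
\end{align*}
for $X,Y\in\ll\oplus V$. A direct check shows that $T$ is bilinear and antisymmetric (indeed $T(Y,X)=-T(X,Y)$), so $N_{\ll\oplus V}$ is Nijenhuis if and only if $T(X,Y)=0$ for $X,Y$ ranging over the two summands $\ll$ and $V$ in one fixed order of the pair. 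This leaves exactly three cases: $X=x,\,Y=y\in\ll$; the mixed case $X=x\in\ll,\,Y=v\in V$; and $X=u,\,Y=v\in V$. Each is evaluated by substituting \eqref{eq:semip} and \eqref{eq:nsemip}.

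The outcome of the three cases gives the result. When $X,Y\in\ll$, every bracket stays in $\ll$ and $N_{\ll\oplus V}$ acts as $N$, so $T(x,y)=0$ is precisely the Nijenhuis equation \eqref{eq:n} for $N$ on $\ll$, which holds by hypothesis. When $X,Y\in V$, all brackets among $V$-elements vanish, so $T(u,v)=0$ trivially. In the mixed case every surviving term lands in $V$: one has $[x,v]_{\heartsuit}=\rho(x)v$, $N_{\ll\oplus V}x=N(x)$ and $N_{\ll\oplus V}v=\a(v)$, whence $T(x,v)=\rho(N(x))\a(v)+\a^2(\rho(x)v)-\a(\rho(N(x))v)-\a(\rho(x)\a(v))$, which vanishes for all $x\in\ll$ and $v\in V$ exactly when \eqref{eq:repnlie} holds. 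Therefore $N+\a$ is a Nijenhuis operator on $\ll\ltimes^{\rho}V$ if and only if $((V,\rho),\a)$ is a representation of $((\ll,[,]),N)$, as claimed.

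The only delicate point, and the one I would write out in full, is the bookkeeping in the mixed case: one must confirm that no $\ll$-valued contributions appear, so that the identity is genuinely the $V$-valued equation \eqref{eq:repnlie}, and that the four terms of $T(x,v)$ match \eqref{eq:repnlie} term by term. Beyond this I anticipate no real obstacle, since the argument is a structured expansion and the reduction by bilinearity and antisymmetry keeps the computation short.
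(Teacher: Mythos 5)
Your proof is correct, and it takes a genuinely different route from the paper. The paper writes out no computation at all for this statement: it declares Proposition \ref{pro:repnlie} a special case of the matched-pair result, Proposition \ref{pro:mpnlie}, obtained by taking $\hh=V$ with the zero bracket, $N_\hh=\a$, $\rho_\ll=\rho$ and $\rho_\hh=0$, in which case Eqs.~\eqref{eq:mplie1} and \eqref{eq:mplie2} hold trivially and the semi-direct product bracket \eqref{eq:semip} is recovered from \eqref{eq:mpnliep}. Your argument is instead a self-contained direct verification, and its organization is sound: the defect tensor $T$ is indeed bilinear and antisymmetric (each of the four terms changes sign under swapping $X$ and $Y$, using \eqref{eq:anti}), so checking the three ordered cases $(x,y)$, $(x,v)$, $(u,v)$ suffices; the case in $\ll$ is the Nijenhuis identity \eqref{eq:n} for $N$, the case in $V$ vanishes because $V$ is an abelian summand for $[,]_{\heartsuit}$, and the mixed case produces exactly $\rho(N(x))\a(v)+\a^2(\rho(x)v)-\a(\rho(N(x))v)-\a(\rho(x)\a(v))$ with no $\ll$-valued contribution, which is Eq.~\eqref{eq:repnlie}. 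As for what each approach buys: the paper's deferral avoids duplicating an expansion that reappears in the proof of Proposition \ref{pro:mpnlie} and situates the construction inside the matched-pair framework used later for the bialgebra theory; your version is logically tighter for the stated equivalence, because in Proposition \ref{pro:mpnlie} the representation property of $((\hh,\rho_\ll),N_\hh)$ is a standing \emph{hypothesis} rather than one side of an ``if and only if,'' so extracting the ``only if'' direction of Proposition \ref{pro:repnlie} from it strictly speaking requires isolating the mixed-case terms again --- precisely the computation you carry out explicitly.
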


 \begin{proof}
 The proof can be seen as a special case of Proposition \ref{pro:mpnlie}, so we skip it.
 \end{proof}

 \begin{lem}\label{lem:dualrep} Let $((\ll, [,]), N)$ be a \N Lie algebra, $(V, \rho)$ be a representation of $(\ll, [,])$ and $\b: V\lr V$ be a linear map. Define a linear map $\rho^*: \ll\lr End(V^*)$ by
 \begin{eqnarray*}\label{eq:2.8a}
 &\langle \rho^*(x) v^*, v \rangle =-\langle v^*, \rho(x)v \rangle,&
 \end{eqnarray*}
 where $x\in \ll, v^*\in V^*$ and $v\in V$. Then $((V^*, \rho^*), \b^*)$ is a representation of $((\ll, [,]), N)$ if and only if the following condition holds:
 \begin{eqnarray}
 &\b(\rho(N(x))v)+\rho(x)\b^2(v)=\rho(N(x))\b(v)+\b(\rho(x)\b(v)),&\label{eq:dualrep}
 \end{eqnarray}
 where $x\in \ll$ and $v\in V$.

 Especially, for a linear map $P: \ll\lr \ll$, $((\ll^*, ad^*), P^*)$ is a representation of $((\ll, [,]), N)$ if and only if the following condition holds:
 \begin{eqnarray}\label{eq:dualrep-1}
 &P([N(x), y])+[x, P^2(y)]=[N(x), P(y)]+P([x, P(y)]),&
 \end{eqnarray}
 where $x, y\in \ll$.
 \end{lem}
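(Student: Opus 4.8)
The plan is to reduce everything to a single pairing computation, since by Definition \ref{de:repnlie} the only content beyond a formal check lies in the Nijenhuis compatibility condition \eqref{eq:repnlie}. First I would record that $(V^*, \rho^*)$ is automatically a representation of the underlying Lie algebra $(\ll, [,])$: this is the standard contragredient construction, and evaluating $\langle \rho^*([x,y])v^*, v\rangle = -\langle v^*, \rho([x,y])v\rangle$ against \eqref{eq:replie} gives $\rho^*([x,y]) = \rho^*(x)\rho^*(y) - \rho^*(y)\rho^*(x)$ at once. Consequently the triple $((V^*, \rho^*), \b^*)$ is a representation of $((\ll, [,]), N)$ precisely when the compatibility equation \eqref{eq:repnlie}, with $\a$ replaced by $\b^*$, holds:
$$\rho^*(N(x))\b^*(v^*) + (\b^*)^2(\rho^*(x)v^*) = \b^*(\rho^*(N(x))v^*) + \b^*(\rho^*(x)\b^*(v^*)), \quad \forall\, x\in\ll,\ v^*\in V^*.$$

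Next I would test this identity against an arbitrary $v\in V$ under the pairing, using $\langle \rho^*(x)w^*, w\rangle = -\langle w^*, \rho(x)w\rangle$, $\langle \b^* w^*, w\rangle = \langle w^*, \b w\rangle$, and the harmless identification $(\b^*)^2 = (\b^2)^*$. Transposing each operator across the pairing term by term converts the four summands into $-\langle v^*, \b(\rho(N(x))v)\rangle$, $-\langle v^*, \rho(x)\b^2(v)\rangle$, $-\langle v^*, \rho(N(x))\b(v)\rangle$, and $-\langle v^*, \b(\rho(x)\b(v))\rangle$ respectively. After cancelling the common sign, the displayed condition becomes
$$\langle v^*, \b(\rho(N(x))v) + \rho(x)\b^2(v)\rangle = \langle v^*, \rho(N(x))\b(v) + \b(\rho(x)\b(v))\rangle.$$

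Since the pairing between $V$ and $V^*$ is nondegenerate, demanding this for all $v^*\in V^*$ is equivalent to the operator identity \eqref{eq:dualrep}, which proves the main equivalence. The special case then follows by specialization: taking $(V, \rho) = (\ll, ad)$ and $\b = P$, so that the dual representation is $(\ll^*, ad^*)$ with $\b^* = P^*$, the substitution $\rho(x)v = [x, v]$ turns \eqref{eq:dualrep} verbatim into \eqref{eq:dualrep-1} after renaming $v$ to $y$. I do not expect a genuine obstacle here; the only step needing care is the bookkeeping of signs and of the order of composition when transposing the composites $\rho^*(N(x))\b^*$ and $\b^*\rho^*(x)\b^*$ across the pairing, which is where an error would most easily slip in.
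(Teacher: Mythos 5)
Your proposal is correct and takes essentially the same route as the paper's proof: both reduce the claim to transposing the compatibility condition \eqref{eq:repnlie} for $((V^*,\rho^*),\b^*)$ across the dual pairing term by term (with the correct signs in all four summands) and invoking nondegeneracy of the pairing, then obtain the adjoint case by specializing $(V,\rho)=(\ll,ad)$ and $\b=P$. Nothing needs fixing; if anything, your writeup is more explicit than the paper's, whose displayed pairing identity even contains a typo ($m^*$ in place of $v^*$).
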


 \begin{proof} First we note that $(V^*, \rho^*)$ is a representation of $(\ll, [,])$ since $(V, \rho)$ is a representation of $(\ll, [,])$. Eq.(\ref{eq:repnlie}) holds for $((V^*, \rho^*), \b^*)$ if and only if, for all $x\in \ll, v\in V, v^*\in V^*$,
 \begin{eqnarray*}
 &&\hspace{-5mm}\langle \rho^*(N(x))\b^*(v^*)+\b^{*2}(\rho^*(x)v^*)-\b^*(\rho^*(N(x))v^*)-\b^*(\rho^*(x)\b^*(v^*)), v \rangle\\
 &&=\langle m^*, -\b(\rho(N(x))v)-\rho(x)\b^2(v)+\rho(N(x))\b(v)+\b(\rho(x)\b(v))\rangle.
 \end{eqnarray*}
 Thus we finish the proof.
 \end{proof}

 \begin{defi}\label{de:admop} Under the assumption in Lemma \ref{lem:dualrep}. {\bf $((\ll, [,]), N)$ is $\b$-admissible with respect to $(V, \rho)$} if Eq.(\ref{eq:dualrep}) holds. When Eq.(\ref{eq:dualrep-1}) holds, we say {\bf $((\ll, [,]), N)$ is $P$-admissible}.
 \end{defi}

 \begin{rmk}\label{rmk:dualrep}
  \begin{enumerate}
    \item $((\ll, [,]), N)$ is not $\a$-admissible with respect to $(V, \rho)$ even if $((V, \rho), \a)$ is a representation of $((\ll, [,]), N)$.
    \item $((\ll, [,]), N)$ is $N^*$-admissible.
    \item If $((V, \rho), \a)$ is a representation of $((\ll, [,]), N)$, then when $\b=k \a+\ell \id_V$, $k, \ell\in K$, Eq.(\ref{eq:dualrep}) turns to be
        \begin{eqnarray*}
        &k^2(\rho(x)\a^2(v)-\a(\rho(x)\a(v)))+k(\a^2(\rho(x)v)-\a(\rho(x)\a(v)))+k\ell (\rho(x)\a(v)-\a(\rho(x)v))=0,&
        \end{eqnarray*}
        where $x\in \ll$ and $v\in V$. Therefore $((\ll, [,]), N)$ is $\ell \id_V$-admissible with respect to $(V, \rho)$. If further $\a(\rho(x)v)=\rho(x)\a(v)$, then $((\ll, [,]), N)$ is $\b=k \a+\ell \id_V$-admissible with respect to $(V, \rho)$.
    \item Let $((\ll, [,]), N)$ be a \N Lie algebra, $(V, \rho)$ be a representation of $(\ll, [,])$, $\wn=-\id_\ll-N,~\widetilde{\b}=-\id_V-\b$. Then $((\ll, [,]), \wn)$ is a \N Lie algebra and $((\ll, [,]), N)$ is $\b$-admissible with respect to $(V, \rho)$ if and only if $((\ll, [,]), \wn)$ is $\widetilde{\b}$-admissible with respect to $(V, \rho)$.
 \end{enumerate}
 \end{rmk}

\subsection{Corepresentations of Nijenhuis Lie coalgebras} \label{se:cdrep} Let us consider compatible corepresentations over compatible Lie coalgebras and then introduce the notions of a Nijenhuis Lie coalgebra and its corepresentation.

 \begin{defi}\label{de:corep}\cite{Zh} Let $(\ll, \d)$ be a Lie coalgebra, $V$ be a vector space, $\g: V\lr \ll\o V$ (write $\g(v)=v_{(-1)}\o v_{(0)}$) be a linear map. Then $(V, \g)$ is a corepresentation of $(\ll, \d)$ if
 \begin{eqnarray}
 &v_{(-1)(1)}\o v_{(-1)(2)}\o v_{(0)}=v_{(-1)}\o v_{(0)(-1)}\o v_{(0)(0)}- v_{(0)(-1)}\o v_{(-1)}\o v_{(0)(0)},&\label{eq:corep}
 \end{eqnarray}
 where $v\in V$.

 Let $(V, \g)$ and $(V', \g')$ be corepresentations of $(\ll, \d)$ and $(\ll', \d')$, respectively. A {\bf homomorphism from $(V, \g)$ to $(V', \g')$} is a pair $(f, g)$, where $f: \ll\lr \ll'$ and $g: V\lr V'$ are linear maps such that for all $x\in \ll$ and $v\in V$,
 \begin{eqnarray}
 &\d'(f(x))=(f\o f)(\d(x)),\quad \g'(g(v))=(f\o g)(\g(v)).&\label{eq:homocorep}
 \end{eqnarray}
 \end{defi}

 \begin{lem} \label{lem:comcoalg} Let $\ll$ be a vector spaces, $\d, \D: \ll\lr \ll\o \ll$ (write $\d(x)=x_{(1)}\o x_{(2)}$ and $\D(x)=x_{[1]}\o x_{[2]}$) be two linear maps, $s, t$ be two parameters. Set $\d_{s, t}=s \d+t \D$. Then $(\ll, \d_{s, t})$ is a Lie coalgebra if and only if $(\ll, \d)$, $(\ll, \D)$ are two Lie coalgebras and, for all $x\in \ll$, the equation below holds:
 \begin{eqnarray*}
 &&\hspace{-6mm}x_{(1)}\o x_{(2)[1]}\o x_{(2)[2]}+x_{[1]}\o x_{[2](1)}\o x_{[2](2)}
 +x_{(2)[1]}\o x_{(2)[2]}\o x_{(1)}\nonumber\\
 &&+x_{[2](1)}\o x_{[2](2)}\o x_{[1]}+
 x_{(2)[2]}\o x_{(1)}\o x_{(2)[1]}+x_{[2](2)}\o x_{[1]}\o x_{[2](1)}=0.\label{eq:ca2}
 \end{eqnarray*}
 \end{lem}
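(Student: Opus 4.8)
The plan is to test the two defining axioms of a Lie coalgebra, \meqref{eq:coanti} and \meqref{eq:coj}, directly on $\d_{s,t}=s\d+t\D$ and to sort the resulting tensors by the monomials $s^2$, $t^2$ and $st$ in the parameters.

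First I would dispose of the co-antisymmetry axiom \meqref{eq:coanti}. Writing $\tau$ for the flip on $\ll\o\ll$, one has $\tau\ci\d_{s,t}+\d_{s,t}=s(\tau\ci\d+\d)+t(\tau\ci\D+\D)$, which is \emph{linear} in the parameters. Hence, for generic $s,t$, it vanishes if and only if $\tau\ci\d=-\d$ and $\tau\ci\D=-\D$ separately, i.e.\ both $\d$ and $\D$ are co-antisymmetric; no mixed condition appears at this level.

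The heart of the argument is the co-Jacobi axiom \meqref{eq:coj}. I would expand
\begin{align*}
 (\id\o\d_{s,t})\d_{s,t}(x)
 &= s^2\, x_{(1)}\o x_{(2)(1)}\o x_{(2)(2)} + t^2\, x_{[1]}\o x_{[2][1]}\o x_{[2][2]}\\
 &\quad + st\bigl(x_{(1)}\o x_{(2)[1]}\o x_{(2)[2]} + x_{[1]}\o x_{[2](1)}\o x_{[2](2)}\bigr),
\end{align*}
reading off the four contributions of $(\id\o(s\d+t\D))\ci(s\d+t\D)$. Applying the cyclic sum over the three tensor legs (the operator $\id+\xi+\xi^2$ with $\xi$ the left cyclic shift sending $a\o b\o c$ to $b\o c\o a$) and using its linearity, the $s^2$-part becomes exactly the left-hand side of \meqref{eq:coj} for $\d$, the $t^2$-part the analogous co-Jacobi expression for $\D$, and the $st$-part reproduces \emph{verbatim} the six-term compatibility identity displayed in the statement. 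Thus the co-Jacobiator of $\d_{s,t}$ equals $s^2 J_\d + t^2 J_\D + st\,M$, where $J_\d$, $J_\D$ are the co-Jacobiators of $\d$, $\D$ and $M$ is the mixed expression of the statement.

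Finally I would conclude by comparing coefficients: regarding the co-Jacobi identity for $\d_{s,t}$ as a polynomial identity in the arbitrary parameters $s,t$, its validity is equivalent to the simultaneous vanishing $J_\d=0$, $J_\D=0$, $M=0$. Together with the co-antisymmetry step, this says precisely that $(\ll,\d)$ and $(\ll,\D)$ are Lie coalgebras and that the compatibility identity holds, which is the asserted equivalence. The one point needing care -- and the only place a computation is really required -- is the bookkeeping of the nested Sweedler indices in the two cross terms and the check that their cyclic sum matches the six terms of the statement one for one; I should also flag that the equivalence is read with $s,t$ generic, since for a single fixed pair $(s,t)$ the three conditions could in principle combine into a single relation.
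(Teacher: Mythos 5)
Your proposal is correct and follows the same route as the paper, whose proof is simply the direct verification of Eqs.~\eqref{eq:coanti} and \eqref{eq:coj} for $\d_{s,t}$; your expansion into $s^2 J_\d + t^2 J_\D + st\,M$ and the cyclic-sum bookkeeping is exactly that check made explicit, and the six cross terms match the stated identity one for one. Your caveat that the equivalence requires $s,t$ generic (a fixed pair such as $s=0$ would collapse the three conditions) is a point the paper leaves implicit in calling $s,t$ ``parameters,'' and is worth flagging as you did.
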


 \begin{proof} It is direct by checking Eqs.(\ref{eq:coanti}) and (\ref{eq:coj}) for $\d_{s, t}$.
 \end{proof}

 \begin{lem}\label{lem:comcorep} Let $(\ll, \d_{s, t})$ be a Lie coalgebra, and $\g, \G: V\lr \ll\o V$ (write $\g(v)=v_{(-1)}\o v_{(0)}$ and $\G(v)=v_{[-1]}\o v_{[0]}$) be two linear maps. Set $\g_{s, t}=s \g+t \G$. Then $(V, \g_{s, t})$ is a corepresentation of $(\ll, \d_{s, t})$ if and only if $(V, \g)$, $(V, \G)$ are corepresentations of $(\ll, \d)$ and $(\ll, \D)$, respectively, and further, for all $v\in V$, the equation below holds:
 \begin{eqnarray*}
 &&\hspace{-6mm}v_{(-1)[1]}\o v_{(-1)[2]}\o v_{(0)}+v_{[-1](1)}\o v_{[-1](2)}\o v_{[0]}\\
 &&=v_{(-1)}\o v_{(0)[-1]}\o v_{(0)[0]}+v_{[-1]}\o v_{[0](-1)}\o v_{[0](0)}\\
 &&\quad-v_{(0)[-1]}\o v_{(-1)}\o v_{(0)[0]}-v_{[0](-1)}\o v_{[-1]}\o v_{[0](0)}.\label{eq:ca2}
 \end{eqnarray*}
 \end{lem}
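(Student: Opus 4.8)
The plan is to expand the corepresentation axiom Eq.(\ref{eq:corep}) for the pair $(\g_{s,t},\d_{s,t})$ and to sort the resulting expression by monomials in the parameters $s$ and $t$, exactly as in the proof of Lemma \ref{lem:comcoalg}. Since the axiom Eq.(\ref{eq:corep}) is obtained by applying the structure maps twice---once via $\g_{s,t}$ to produce the first decomposition of $v$, and once more (via $\d_{s,t}$ on the first leg for the left-hand side, and via $\g_{s,t}$ on the appropriate leg for the right-hand side)---each side becomes a homogeneous polynomial of degree two in $(s,t)$, carrying only the monomials $s^2$, $st$ and $t^2$. The whole identity therefore splits into three independent equations, one for each monomial.

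First I would compute the left-hand side $(\d_{s,t}\o\id_V)\g_{s,t}(v)$. Writing $\g_{s,t}(v)=s\,v_{(-1)}\o v_{(0)}+t\,v_{[-1]}\o v_{[0]}$ and then applying $\d_{s,t}=s\d+t\D$ to the first tensor factor produces the $s^2$-term $v_{(-1)(1)}\o v_{(-1)(2)}\o v_{(0)}$, the $t^2$-term $v_{[-1][1]}\o v_{[-1][2]}\o v_{[0]}$, and the mixed $st$-term $v_{(-1)[1]}\o v_{(-1)[2]}\o v_{(0)}+v_{[-1](1)}\o v_{[-1](2)}\o v_{[0]}$. In the same way, forming the right-hand side of Eq.(\ref{eq:corep}) with $\g_{s,t}$---that is, $(\id\o\g_{s,t})\g_{s,t}(v)$ minus its image under the flip of the first two tensor legs---yields the $s^2$-term $v_{(-1)}\o v_{(0)(-1)}\o v_{(0)(0)}-v_{(0)(-1)}\o v_{(-1)}\o v_{(0)(0)}$, the $t^2$-term $v_{[-1]}\o v_{[0][-1]}\o v_{[0][0]}-v_{[0][-1]}\o v_{[-1]}\o v_{[0][0]}$, and a mixed $st$-term consisting of the four cross expressions in which $\g$ and $\G$ alternate. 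Matching the two sides monomial by monomial then gives precisely: the $s^2$-equation is Eq.(\ref{eq:corep}) for $(V,\g)$ over $(\ll,\d)$; the $t^2$-equation is Eq.(\ref{eq:corep}) for $(V,\G)$ over $(\ll,\D)$; and the $st$-equation is exactly the displayed compatibility identity.

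With the three coefficient equations in hand, both directions follow. For the ``if'' part, assuming the two corepresentation axioms and the compatibility hold, one recombines them as $s^2\cdot(\text{corep}_\g)+st\cdot(\text{compat})+t^2\cdot(\text{corep}_\G)=0$, which is the corepresentation axiom for $(V,\g_{s,t})$ over $(\ll,\d_{s,t})$. For the ``only if'' part, the axiom for $\g_{s,t}$ is a polynomial identity in the parameters $s,t$, so its three coefficients vanish separately, returning the three conditions. I expect the only real work to be bookkeeping: keeping the two Sweedler conventions $\g(v)=v_{(-1)}\o v_{(0)}$ and $\G(v)=v_{[-1]}\o v_{[0]}$ straight through the iterated compositions, so that the mixed terms such as $v_{(-1)[1]}\o v_{(-1)[2]}\o v_{(0)}$ (apply $\D$ to the $\g$-leg) and $v_{(0)[-1]}\o v_{(-1)}\o v_{(0)[0]}$ (apply $\G$ to the $\g$-leg, then flip) land with the correct index labels and leg placements. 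Once the terms are organised, the comparison is immediate and entirely parallel to Lemma \ref{lem:comcoalg}.
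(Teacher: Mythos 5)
Your proposal is correct and follows essentially the same route as the paper, whose proof is exactly the direct verification you describe: expand Eq.~(\ref{eq:corep}) for $\g_{s,t}$ and $\d_{s,t}$, and compare the $s^2$-, $st$- and $t^2$-components, which yield respectively the corepresentation axiom for $(V,\g)$ over $(\ll,\d)$, for $(V,\G)$ over $(\ll,\D)$, and the displayed compatibility identity. Your remark that the ``only if'' direction splits the single identity into three separate conditions tacitly treats $s,t$ as generic (formal) parameters, which is the same convention the paper uses in Lemma~\ref{lem:comcoalg} and here.
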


 \begin{proof} It is direct by checking Eq.(\ref{eq:corep}) for $\d_{s, t}$ and $\g_{s, t}$.
 \end{proof}

 \begin{thm}\label{thm:comcn} Let $(V, \g)$ be a corepresentation of $(\ll, \d)$, $(V, \g_{s, t})$ be a corepresentation of $(\ll, \d_{s, t})$, $P: \ll\lr \ll$ and $\b: V\lr V$ be two linear maps. Then $(s\id_\ll+t P, s\id_V+t \b)$ is a homomorphism from $(V, \g)$ of $(\ll, \d)$ to $(V, \g_{s, t})$ of $(\ll, \d_{s, t})$ if and only if for all $x\in \ll$ and $v\in V$, the equations below hold:
 \begin{eqnarray}
 &P(x)_{[1]}\o P(x)_{[2]}=P(x_{(1)})\o P(x_{(2)}),&\label{eq:cmh1}\\
 &P(x)_{(1)}\o P(x)_{(2)}+x_{[1]}\o x_{[2]}=x_{(1)}\o P(x_{(2)})+P(x_{(1)})\o x_{(2)},&\label{eq:cmh2}\\
 &\b(v)_{[-1]}\o \b(v)_{[0]}=P(v_{(-1)})\o \b(v_{(0)}),&\label{eq:cmh3}\\
 &\b(v)_{(-1)}\o \b(v)_{(0)}+v_{[-1]}\o v_{[0]}=v_{(-1)}\o \b(v_{(0)})+P(v_{(-1)})\o v_{(0)}.&\label{eq:cmh4}
 \end{eqnarray}
 \end{thm}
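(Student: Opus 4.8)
The plan is to prove this by directly unfolding the definition of a homomorphism of corepresentations (Definition \ref{de:corep}, i.e.\ the two identities in \eqref{eq:homocorep}) for the specific data $f = s\id_\ll + tP$ and $g = s\id_V + t\b$, recalling that $\d_{s,t} = s\d + t\D$ and $\g_{s,t} = s\g + t\G$. Everything then reduces to bookkeeping of the two resulting tensor identities: expand each by bilinearity and sort the terms according to the monomials $s^2$, $st$, $t^2$ in the parameters $s,t$.

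First I would treat the coalgebra condition $\d_{s,t}(f(x)) = (f\o f)(\d(x))$. Expanding the left-hand side gives
\[
\d_{s,t}\big((s\id_\ll + tP)(x)\big) = s^2\,\d(x) + st\,\big(\d(P(x)) + \D(x)\big) + t^2\,\D(P(x)),
\]
while expanding the right-hand side gives
\[
(f\o f)(\d(x)) = s^2\,\d(x) + st\,\big(x_{(1)} \o P(x_{(2)}) + P(x_{(1)}) \o x_{(2)}\big) + t^2\,P(x_{(1)}) \o P(x_{(2)}).
\]
The $s^2$-terms coincide and carry no information. Comparing the coefficients of $t^2$ yields exactly \eqref{eq:cmh1}, and comparing the coefficients of $st$ yields exactly \eqref{eq:cmh2}.

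Next I would treat the corepresentation condition $\g_{s,t}(g(v)) = (f\o g)(\g(v))$ in the same fashion. Here the left-hand side expands to $s^2\,\g(v) + st\,\big(\g(\b(v)) + \G(v)\big) + t^2\,\G(\b(v))$, and the right-hand side expands to $s^2\,\g(v) + st\,\big(v_{(-1)} \o \b(v_{(0)}) + P(v_{(-1)}) \o v_{(0)}\big) + t^2\,P(v_{(-1)}) \o \b(v_{(0)})$. Again the $s^2$-terms match automatically; comparing the $t^2$-coefficients gives \eqref{eq:cmh3} and comparing the $st$-coefficients gives \eqref{eq:cmh4}. Conversely, substituting \eqref{eq:cmh1}--\eqref{eq:cmh4} back into these expansions recovers both identities of \eqref{eq:homocorep}, so the equivalence holds.

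The only genuinely delicate point, hence the step I would treat most carefully, is the legitimacy of separating the coefficients of $s^2$, $st$ and $t^2$: this is valid precisely because the homomorphism identities are required to hold as identities in the parameters $s,t$ (equivalently, for generic values), which forces the separate vanishing of each monomial coefficient and so supplies the forward direction. The backward direction is the routine substitution noted above, and no Jacobi- or coassociativity-type computation is needed, since the standing hypotheses that $(\ll,\d)$, $(\ll,\d_{s,t})$, $(V,\g)$ and $(V,\g_{s,t})$ are (co)representations are already in force.
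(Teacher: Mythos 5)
Your proposal is correct and takes essentially the same route as the paper, whose entire proof is the one-line remark that the claim follows ``by checking Eq.~\eqref{eq:homocorep} for $(s\id_\ll+t P, s\id_V+t \b)$''; your bilinear expansion and comparison of the $s^2$, $st$, $t^2$ coefficients (yielding \eqref{eq:cmh2}, \eqref{eq:cmh1} from the first identity and \eqref{eq:cmh4}, \eqref{eq:cmh3} from the second) is precisely that intended verification, carried out explicitly. Your added caveat that the forward direction requires $s,t$ to be treated as formal (generic) parameters is accurate and consistent with the paper's standing convention for $\d_{s,t}$ in Lemma~\ref{lem:comcoalg}.
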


 \begin{proof} It is straightforward by checking Eq.(\ref{eq:homocorep}) for $(s\id_\ll+t P, s\id_V+t \b)$.
 \end{proof}

 Based on Eqs.(\ref{eq:cmh1}) and (\ref{eq:cmh2}), we have

 \begin{defi}\label{de:nc} Let $(\ll, \d)$ be a Lie coalgebra and $P:\ll\lr \ll$ a linear map. A {\bf \N Lie coalgebra} is a triple $((\ll, \d), P)$ such that for all $x\in \ll$,
 \begin{eqnarray}
 &P(x_{(1)})\o P(x_{(2)})+P^2(x)_{(1)}\o P^2(x)_{(2)}=P(P(x)_{(1)})\o P(x)_{(2)}+P(x)_{(1)}\o P(P(x)_{(2)}).&\label{eq:nc}
 \end{eqnarray}
 \end{defi}

 For a linear map $\psi: M\to  V$, we denote the transpose map by $\psi^*: V^*\lr  M^*$ given by
 $$
 \langle \psi^*(v^*), m \rangle=\langle v^*, \psi(m) \rangle , \quad \forall~ m\in M, v^*\in V^*.
 $$

 \begin{pro}\label{pro:dualacoa}
  \begin{enumerate}
    \item \label{it:dualaco1} If $((\ll, \d), P)$ is a \N Lie coalgebra, then $((\ll^*, \d^*), P^*)$ is a \N Lie algebra, where $\d^*:\ll^*\o \ll^*\lr \ll^*$ is defined by $\langle \d^*(x^*\o y^*),  z\rangle=\langle x^*,  z_{(1)}\rangle \langle y^*,  z_{(2)}\rangle$.
    \item \label{it:dualaco2} If $((\ll, [,]), N)$ is a finite dimensional \N Lie algebra, then $((\ll^*, [,]^*), N^*)$ is a \N Lie coalgebra, where $[,]^*:\ll^*\lr \ll^*\o \ll^*$ is defined by $\langle [,]^*(x^*),  y\o z\rangle=\langle x^*,  [y, z]\rangle$.
 \end{enumerate}
 \end{pro}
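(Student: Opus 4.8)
The plan is to treat both assertions as instances of the standard duality between Lie algebras and Lie coalgebras, verifying in each case the relevant Nijenhuis identity by evaluating it against generic elements and observing that it collapses to the Nijenhuis identity on the opposite side. For part \ref{it:dualaco1}, I would first recall that dualizing a Lie coalgebra $(\ll, \d)$ always yields a Lie algebra $(\ll^*, \d^*)$ (Eqs.(\ref{eq:coanti}) and (\ref{eq:coj}) transpose to Eqs.(\ref{eq:anti}) and (\ref{eq:j})), so the only thing left to establish is the Nijenhuis condition Eq.(\ref{eq:n}) for $P^*$.

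To this end I would pair the element
\[
[P^*(x^*), P^*(y^*)]+(P^*)^2[x^*, y^*]-P^*[P^*(x^*), y^*]-P^*[x^*, P^*(y^*)]\in\ll^*
\]
against an arbitrary $z\in\ll$, writing the bracket on $\ll^*$ as $[u^*, v^*]=\d^*(u^*\o v^*)$ and using $\langle P^*(u^*), w\rangle=\langle u^*, P(w)\rangle$ together with $\langle \d^*(u^*\o v^*), w\rangle=\langle u^*, w_{(1)}\rangle\langle v^*, w_{(2)}\rangle$. Each of the four terms transfers the operators and the comultiplication onto the $\ll$-side; for instance $\langle[P^*(x^*), P^*(y^*)], z\rangle=\langle x^*\o y^*,\, P(z_{(1)})\o P(z_{(2)})\rangle$ and $\langle(P^*)^2[x^*, y^*], z\rangle=\langle x^*\o y^*,\, P^2(z)_{(1)}\o P^2(z)_{(2)}\rangle$. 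Collecting the four contributions shows that the pairing equals $\langle x^*\o y^*,\, E(z)\rangle$, where $E(z)$ is exactly the difference of the two sides of the \N Lie coalgebra identity Eq.(\ref{eq:nc}). Since $((\ll, \d), P)$ is a \N Lie coalgebra we have $E(z)=0$ for every $z$, so the pairing vanishes for all $x^*, y^*, z$, which is the required Nijenhuis condition for $P^*$.

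For part \ref{it:dualaco2} I would argue symmetrically. Finite-dimensionality supplies the canonical identification $(\ll\o\ll)^*\cong\ll^*\o\ll^*$ and a nondegenerate pairing, so $(\ll^*, [,]^*)$ is genuinely a Lie coalgebra and an equality of elements of $\ll^*\o\ll^*$ may be checked by pairing against all decomposable tensors $y\o z\in\ll\o\ll$. Writing $[,]^*(u^*)=u^*_{(1)}\o u^*_{(2)}$ with $\langle u^*_{(1)}\o u^*_{(2)},\, y\o z\rangle=\langle u^*, [y, z]\rangle$, I pair Eq.(\ref{eq:nc}) for $N^*$ against $y\o z$. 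The left-hand side reduces to $\langle x^*,\, [N(y), N(z)]+N^2[y, z]\rangle$ and the right-hand side to $\langle x^*,\, N[N(y), z]+N[y, N(z)]\rangle$, so their difference is $\langle x^*,\, G(y, z)\rangle$ with $G(y, z)$ the defect of the Nijenhuis Lie algebra identity Eq.(\ref{eq:n}) for the pair $(y, z)$. Since $N$ is a Nijenhuis operator, $G(y, z)=0$, whence Eq.(\ref{eq:nc}) for $N^*$ holds after pairing against every $y\o z$, and therefore identically.

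The two computations are routine transpositions, and the only points demanding care are the bookkeeping of the transpose maps and the Sweedler indices, together with the explicit use of finite-dimensionality in part \ref{it:dualaco2}. I expect that identification to be the main (mild) obstacle: it is exactly what guarantees both that $N^*$ maps $\ll^*$ into $\ll^*\o\ll^*$ rather than merely into the larger space $(\ll\o\ll)^*$, and that vanishing on all decomposable tensors forces vanishing in $\ll^*\o\ll^*$; without it $N^*$ would not define a bona fide \N Lie coalgebra structure.
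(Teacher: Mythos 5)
Your proposal is correct and follows essentially the same route as the paper: the paper's proof of Item (\ref{it:dualaco1}) is exactly your pairing computation, transferring $\d^*$ and $P^*$ onto the $\ll$-side against an arbitrary $z$ so the defect collapses to the left side of Eq.(\ref{eq:nc}). The paper leaves Item (\ref{it:dualaco2}) to the reader, so your explicit dual argument there, including the remark that finite-dimensionality supplies the identification $(\ll\o\ll)^*\cong \ll^*\o\ll^*$, merely fills in what the paper omits.
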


 \begin{proof} We only verify Item (\ref{it:dualaco1}). Item (\ref{it:dualaco2}) is left to the readers. Since $(\ll, \d)$ is a Lie coalgebra, then $(\ll^*, \d^*)$ is a Lie algebra (\cite{Maj}). Next we only need to check that Eq.(\ref{eq:n}) holds for $\d^*, P^*$. In fact, for all $x^*, y^*\in \ll^*$ and $z\in \ll$, one gets
 \begin{eqnarray*}
 &&\hspace{-6mm}\langle \d^*(P^*(x^*)\o P^*(y^*))+P^{*2}(\d^*(x^*\o y^*))-P^*(\d^*(P^*(x^*)\o y^*))-P^*(\d^*(x^*\o P^*(y^*))), z\rangle\\
 &&\hspace{-3mm}=\langle x^*\o y^*,  P(z_{(1)})\o P(z_{(2)})+P^2(z)_{(1)}\o P^2(z)_{(2)}-P(P(z)_{(1)})\o P(z)_{(2)}-P(z)_{(1)}\o P(P(z)_{(2)})\rangle.
 \end{eqnarray*}
 We finish the proof.
 \end{proof}

  According to Eqs.(\ref{eq:cmh3}) and (\ref{eq:cmh4}), one can obtain

 \begin{defi}\label{de:ncm} Let $((\ll, \d), P)$ be a \N Lie coalgebra, $(V, \g)$ be a corepresentation of $(\ll, \d)$ and $\b: V\lr V$ a linear map. A {\bf corepresentation of $((\ll, \d), P)$} is a triple $((V, \g), \b)$ such that for all $v\in V$, the equation below holds:
 \begin{eqnarray*}
 &P(v_{(-1)})\o \b(v_{(0)})+\b^2(v)_{(-1)}\o \b^2(v)_{(0)}=P(\b(v)_{(-1)})\o \b(v)_{(0)}+\b(v)_{(-1)}\o \b(\b(v)_{(0)}).&\label{eq:ncom}
 \end{eqnarray*}
 \end{defi}

 \begin{pro}\label{pro:semicop} Let $((\ll, \d), P)$ be a Nijenhuis Lie coalgebra, $V$ a vector space, $\g: V\lr \ll\o V$, $\b: V\lr V$ two linear maps. Define a coproduct
 \begin{eqnarray*}
 &\d_{\ll\oplus V}(x+v):=x_{(1)}\o x_{(2)}+v_{(-1)}\o v_{(0)}-v_{(0)}\o v_{(-1)},~\forall~x\in \ll, ~v\in V&\label{eq:semicop}
 \end{eqnarray*}
 on $\ll\oplus V$ and a linear map $P_{\ll\oplus V}$ on $\ll\oplus V$ by $P_{\ll\oplus V}(x+v):=P(x)+\b(v)$. Then $((\ll\oplus V, \d_{\ll\oplus V}), P_{\ll\oplus V})$ is a \N Lie coalgebra if and only if $((V, \g), \b)$ is a corepresentation of $((\ll, \d), P)$. In this case, we call this Nijenhuis Lie coalgebra {\bf semi-direct coproduct} of $((\ll, \d), P)$ by its corepresentation $((V, \g), \b)$ and denote it by $(\ll\times_{\g} V, P+\b)$.
 \end{pro}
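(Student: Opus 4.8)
The plan is to verify directly the two defining properties of a Nijenhuis Lie coalgebra for the pair $(\d_{\ll\oplus V}, P_{\ll\oplus V})$, namely that $(\ll\oplus V, \d_{\ll\oplus V})$ is a Lie coalgebra and that the Nijenhuis coproduct identity \eqref{eq:nc} holds, and to show that each of these reduces, once the hypotheses on $((\ll,\d),P)$ are used up, to exactly the data of a corepresentation in the sense of Definition \ref{de:ncm}. The argument is the coalgebra-theoretic dual of Proposition \ref{pro:repnlie}, and as there the entire computation decouples along the decomposition $\ll\oplus V$: by construction $\d_{\ll\oplus V}$ sends $\ll$ into $\ll\o\ll$ and $V$ into $(\ll\o V)\oplus(V\o\ll)$, while $P_{\ll\oplus V}$ preserves both $\ll$ and $V$. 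Thus it suffices to test every identity separately on $x\in\ll$ and on $v\in V$.

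First I would treat the Lie coalgebra axioms for $\d_{\ll\oplus V}$. The skew-symmetry \eqref{eq:coanti} is immediate: on $\ll$ it is the hypothesis that $(\ll,\d)$ is a Lie coalgebra, and on $V$ the summand $v_{(-1)}\o v_{(0)}-v_{(0)}\o v_{(-1)}$ is skew by construction. For the co-Jacobi identity \eqref{eq:coj}, evaluation on $x\in\ll$ uses only the $\ll$-component of $\d_{\ll\oplus V}$ and returns precisely the co-Jacobi identity of $(\ll,\d)$. Evaluation on $v\in V$ is the first mixed computation: iterating $\d_{\ll\oplus V}$ splits the $\ll$-slot $v_{(-1)}$ further by $\d$ and the $V$-slot $v_{(0)}$ further by $\g$ (with the signs coming from the antisymmetrizing term), and after collecting the six cyclic contributions \eqref{eq:coj} for $v$ collapses to the corepresentation identity \eqref{eq:corep} for $(V,\g)$. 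Hence $(\ll\oplus V,\d_{\ll\oplus V})$ is a Lie coalgebra exactly when $(V,\g)$ is a corepresentation of $(\ll,\d)$, which is the ambient requirement built into Definition \ref{de:ncm}.

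Next I would impose \eqref{eq:nc} on $P_{\ll\oplus V}$. Substituting $P_{\ll\oplus V}(x+v)=P(x)+\b(v)$ and splitting by components, evaluation on $x\in\ll$ returns identically the equation \eqref{eq:nc} for $((\ll,\d),P)$, true by hypothesis. Evaluation on $v\in V$ carries the content: since $\d_{\ll\oplus V}(v)=v_{(-1)}\o v_{(0)}-v_{(0)}\o v_{(-1)}$, each of the four terms of \eqref{eq:nc} becomes the skew-symmetrization of a term with one $\ll$-leg (acted on by $P$ and $\d$) and one $V$-leg (acted on by $\b$ and $\g$). Reading off the $\ll\o V$ block gives exactly $P(v_{(-1)})\o\b(v_{(0)})+\b^2(v)_{(-1)}\o\b^2(v)_{(0)}=P(\b(v)_{(-1)})\o\b(v)_{(0)}+\b(v)_{(-1)}\o\b(\b(v)_{(0)})$, the defining identity of Definition \ref{de:ncm}, while the $V\o\ll$ block is its transpose and so encodes the same statement. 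Combining the two halves of the evaluation yields the asserted equivalence, and under the stated hypotheses the resulting structure is named the semi-direct coproduct $(\ll\times_{\g} V, P+\b)$.

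I expect the main obstacle to be the Sweedler bookkeeping in the two mixed computations, the co-Jacobi check on $v$ and the Nijenhuis check on $v$. The difficulty is organizational rather than conceptual: one must keep straight which factor of a partially coproducted expression lies in $\ll$ and which in $V$, apply the correct map ($\d$ versus $\g$) to each, and track the signs produced by the antisymmetrizing term $v_{(-1)}\o v_{(0)}-v_{(0)}\o v_{(-1)}$, so that the six terms of \eqref{eq:coj} and the four terms of \eqref{eq:nc} collapse cleanly onto \eqref{eq:corep} and the identity of Definition \ref{de:ncm}. Once the decoupling is set up, this is a routine if lengthy verification, entirely parallel to the representation side handled in Proposition \ref{pro:repnlie}.
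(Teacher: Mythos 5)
Your proposal is correct and follows essentially the same route as the paper: both split the Nijenhuis identity \eqref{eq:nc} for $(\d_{\ll\oplus V}, P_{\ll\oplus V})$ along the decomposition $\ll\oplus V$, recovering \eqref{eq:nc} for $((\ll,\d),P)$ on the $\ll$-component and the identity of Definition \ref{de:ncm} from the $\ll\o V$ (equivalently, by skew-symmetry, the $V\o\ll$) block on the $V$-component. The only difference is cosmetic: the paper simply cites \cite{Maj} for the fact that $(\ll\oplus V,\d_{\ll\oplus V})$ is a Lie coalgebra if and only if $(V,\g)$ is a corepresentation of $(\ll,\d)$, whereas you re-verify that fact by hand via the co-Jacobi identity on $v\in V$.
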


 \begin{proof} First we note that $(\ll\oplus V, \d_{\ll\oplus V})$ is a Lie coalgebra if and only if $(V, \g)$ is a corepresentation of $(\ll, \d)$ (\cite{Maj}). For all $x\in \ll$ and $v\in V$, Eq.(\ref{eq:nc}) holds for $\d_{\ll\oplus V}$ and $P_{\ll\oplus V}$ if and only if
 \begin{eqnarray*}
 &&P(x_{(1)})\o P(x_{(2)})+P(v_{(-1)})\o \b(v_{(0)})-\b(v_{(0)})\o P(v_{(-1)})\\
 &&+P^2(x)_{(1)}\o P^2(x)_{(2)}+\b^2(v)_{(-1)}\o \b^2(v)_{(0)}-\b^2(v)_{(0)}\o \b^2(v)_{(-1)}\\
 &&\hspace{-3mm}=P(P(x)_{(1)})\o P(x)_{(2)}+P(\b(v)_{(-1)})\o \b(v_{(0)})-\b(\b(v_{(0)}))\o \b(v)_{(-1)}\\
 &&+P(x)_{(1)}\o P(P(x)_{(2)})+\b(v)_{(-1)}\o \b(\b(v_{(0)}))-\b(v_{(0)})\o P(\b(v)_{(-1)}).
 \end{eqnarray*}
 Thus we can finish the proof.\end{proof}

 Proposition \ref{pro:dualacoa} can be extended to

 \begin{pro}\label{pro:dualncrep}
  \begin{enumerate}
    \item \label{it:dualncrep1} If $((V, \g), \b)$ is a corepresentation of $((\ll, \d), P)$, then $((V^*, \g^*), \b^*)$ is a representation of $((\ll^*, \d^*), P^*)$, where $\g^*: \ll^*\o V^*\lr V^*$ is defined by $\langle \g^*(x^*\o v^*),  u\rangle=\langle x^*,  u_{(-1)}\rangle \langle v^*,  u_{(0)}\rangle$.
    \item \label{it:dualncrep2} If $((V, \rho), \a)$ is a finite dimensional representation of a finite dimensional \N Lie algebra $((\ll, [,]), N)$, then $((V^*, \rho^*), \a^*)$ is a corepresentation of $((\ll^*, [,]^*), N^*)$, where $\rho^*: V^*\lr \ll^*\o V^*$ is defined by $\langle \rho^*(v^*),  x\o u\rangle=\langle v^*,  \rho(x)u \rangle$.
 \end{enumerate}
 \end{pro}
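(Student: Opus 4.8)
The plan is to mirror the proof of Proposition \ref{pro:dualacoa} one level up, at the level of (co)representations: Item (\ref{it:dualncrep1}) is the module-theoretic analogue of Proposition \ref{pro:dualacoa}(\ref{it:dualaco1}), while Item (\ref{it:dualncrep2}) is its finite-dimensional converse obtained through the double-dual identification. For Item (\ref{it:dualncrep1}) I would first record the underlying linear structure: since $(V, \g)$ is a corepresentation of the Lie coalgebra $(\ll, \d)$, transposing the coaction axiom \eqref{eq:corep} shows that $(V^*, \g^*)$ is a representation of the Lie algebra $(\ll^*, \d^*)$ in the sense of \eqref{eq:replie} (the standard corepresentation--representation duality, parallel to the Lie coalgebra/Lie algebra duality invoked in Proposition \ref{pro:dualacoa}). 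It then remains only to verify the Nijenhuis compatibility \eqref{eq:repnlie} for the triple $((V^*, \g^*), \b^*)$ over the Nijenhuis Lie algebra $((\ll^*, \d^*), P^*)$.

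For that verification I would pair the defect of \eqref{eq:repnlie}, namely $\g^*(P^*(x^*)\o\b^*(v^*))+\b^{*2}(\g^*(x^*\o v^*))-\b^*(\g^*(P^*(x^*)\o v^*))-\b^*(\g^*(x^*\o\b^*(v^*)))$, against an arbitrary $u\in V$, and expand term by term using $\langle \g^*(x^*\o v^*), u\rangle=\langle x^*, u_{(-1)}\rangle\langle v^*, u_{(0)}\rangle$, $\langle P^*(x^*), w\rangle=\langle x^*, P(w)\rangle$ and $\langle \b^*(v^*), w\rangle=\langle v^*, \b(w)\rangle$. Each of the four summands transposes to exactly one summand of the Nijenhuis corepresentation condition of Definition \ref{de:ncm}: the first pairs to $\langle x^*\o v^*,\, P(u_{(-1)})\o\b(u_{(0)})\rangle$, the second to $\langle x^*\o v^*,\, \b^2(u)_{(-1)}\o\b^2(u)_{(0)}\rangle$, and the two subtracted terms to $\langle x^*\o v^*,\, P(\b(u)_{(-1)})\o\b(u)_{(0)}\rangle$ and $\langle x^*\o v^*,\, \b(u)_{(-1)}\o\b(\b(u)_{(0)})\rangle$. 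Summing, the whole expression equals $\langle x^*\o v^*,\, \Xi(u)\rangle$, where $\Xi(u)$ is the difference of the left- and right-hand sides of the condition in Definition \ref{de:ncm}; since $((V, \g), \b)$ is a corepresentation of $((\ll, \d), P)$ we have $\Xi(u)=0$, and as $u$ is arbitrary, \eqref{eq:repnlie} holds. This proves Item (\ref{it:dualncrep1}).

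For Item (\ref{it:dualncrep2}) I would exploit finite-dimensionality. By Proposition \ref{pro:dualacoa}(\ref{it:dualaco2}) the target $((\ll^*, [,]^*), N^*)$ is indeed a Nijenhuis Lie coalgebra, and the transpose $\rho^*$ makes $(V^*, \rho^*)$ a corepresentation of $(\ll^*, [,]^*)$. It then remains to check the condition of Definition \ref{de:ncm} for $((V^*, \rho^*), \a^*)$. Rather than redo the algebra, I would observe that under the canonical pairing $\ll\o V\cong(\ll^*\o V^*)^*$ (available because $\ll$ and $V$ are finite dimensional) the defect of Definition \ref{de:ncm} for $((V^*, \rho^*), \a^*)$ pairs, against an arbitrary $x\o u\in\ll\o V$, to the defect of \eqref{eq:repnlie} for $((V, \rho), \a)$ over $((\ll, [,]), N)$. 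The latter vanishes because $((V, \rho), \a)$ is a representation of the Nijenhuis Lie algebra; since the pairing is nondegenerate, the former vanishes too. This is precisely the computation of Item (\ref{it:dualncrep1}) read in reverse, and it gives Item (\ref{it:dualncrep2}).

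I do not expect a genuine obstacle: the entire content is transposition bookkeeping, strictly parallel to (and, for Item (\ref{it:dualncrep2}), the reverse of) Proposition \ref{pro:dualacoa}. The only points deserving care are (i) invoking the background fact that a corepresentation of a Lie coalgebra dualizes to a representation of the dual Lie algebra, so that the underlying (non-Nijenhuis) structure is in place before the compatibility is tested; and (ii) keeping each transpose map $P^*$, $\b^*$ (resp.\ $N^*$, $\a^*$) attached to its correct tensor leg and pairing slot, since a single misplaced transpose would break the exact term-by-term matching with the condition of Definition \ref{de:ncm}. The finite-dimensionality hypothesis enters Item (\ref{it:dualncrep2}) only through $\ll\cong\ll^{**}$, $V\cong V^{**}$ and the nondegeneracy of the pairing, and should be flagged as such.
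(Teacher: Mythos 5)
Your proposal is correct and matches the paper's (implicit) proof: the paper states this result as an extension of Proposition \ref{pro:dualacoa} with the proof omitted, the intended argument being exactly your transposition bookkeeping — pairing the defect of Eq.~\eqref{eq:repnlie} (resp.\ of Definition \ref{de:ncm}) against an arbitrary element and matching it term by term with the dual compatibility condition, just as in the proof of Proposition \ref{pro:dualacoa}. Your explicit flagging of where finite-dimensionality enters in Item (\ref{it:dualncrep2}) is a sound addition rather than a deviation.
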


 \subsection{A sufficient condition for Nijenhuis Lie coalgebra}\label{se:con} In this subsection, we consider the construction of \N operators on a Lie coalgebra. First we give the dual concept of a weak symplectic Lie algebra.

 \begin{defi} Let $(\ll,\d)$ be a Lie coalgebra and $r\in \ll\o \ll$ be an antisymmetric element. Assume that
 \begin{equation*}
 r^1{_{1}}\o r^1{_{2}}\o r^2+r^2\o r^1{_{1}}\o r^1{_{2}}+r^1{_{2}}\o r^2\o r^1{_{1}}=0.
 \end{equation*}
 Then $(\ll, \d)$ is called a weak cosymplectic Lie coalgebra and denoted by $(\ll, \d, r)$.
 \end{defi}

 \begin{pro} Let $(\ll,\d)$ be a Lie coalgebra and $r\in \ll\o \ll$ be an antisymmetric element. If $(\ll, [,], r, \d_r)$ is a quasitriangular Lie bialgebra, where $\d_r$ is defined by Eq.$(\ref{eq:p})$, then $(\ll, \d_r, r)$ is a weak cosymplectic Lie coalgebra.
 \end{pro}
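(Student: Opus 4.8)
The plan is to unwind the weak cosymplectic condition into a cyclic sum and identify that sum with the classical Yang-Baxter expression, which vanishes by hypothesis. First I would record what is available: since $(\ll, [,], r, \d_r)$ is a quasitriangular Lie bialgebra, $(\ll, \d_r)$ is in particular a Lie coalgebra, so it is legitimate to ask whether $(\ll, \d_r, r)$ is weak cosymplectic; moreover, by Definition \mref{de:rmk:cor:4.12} the element $r$ is antisymmetric and satisfies the cYBe \meqref{eq:cybe}. Writing $\d_r(r^1)$ in Sweedler notation as $r^1{}_{(1)}\o r^1{}_{(2)}$, the left-hand side of the weak cosymplectic identity is exactly the cyclic sum
$$(\d_r\o\id)(r)+r^2\o\d_r(r^1)+\big(\d_r(r^1)\big)_{\text{cyc}},$$
so it suffices to prove that this cyclic sum is zero.

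Next I would introduce a second copy $\br=r$ and substitute Eq.\meqref{eq:p} in the form
$$\d_r(r^1)=\br^1\o[r^1,\br^2]+[r^1,\br^1]\o\br^2,$$
which expands the cyclic sum into six tensor monomials, each built from one leg of $r$, one leg of $\br$, and a single bracket. Then comes the main computational step: using only the antisymmetry of $r$ (equivalently Eq.\meqref{eq:anti}) together with the harmless relabeling of the two copies $r\leftrightarrow\br$, I would reduce each of the six monomials to one of the three standard tensors $[r_{12},r_{13}]$, $[r_{13},r_{23}]$, $[r_{12},r_{23}]$ displayed in Remark \mref{rmk:qt}. A careful accounting shows that each of these three appears exactly twice and with a common sign, so that the whole cyclic sum collapses to
$$-2\big([r_{12},r_{13}]+[r_{13},r_{23}]+[r_{12},r_{23}]\big).$$
By the cYBe \meqref{eq:cybe} the bracketed expression is zero, whence the weak cosymplectic identity holds and $(\ll, \d_r, r)$ is a weak cosymplectic Lie coalgebra.

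The delicate point is precisely this bookkeeping. Each use of antisymmetry simultaneously permutes the tensor legs and flips a sign, and a bracket term such as $[r^1,\br^2]$ must be rewritten so that its two arguments are both first legs (or both second legs) before it can be matched against one of the $r_{ij}$ terms; one therefore has to track carefully to which copy of $r$ the antisymmetry is being applied so as to land on the correctly-signed cYBe monomial. I expect no conceptual difficulty beyond this, since the argument is the exact linear dual of the proof of Proposition \mref{pro:caybetosymp} (weak symplectic from the co-cYBe), and that proof may be used as an independent sanity check on the signs and permutations.
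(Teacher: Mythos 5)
Your proof is correct and is essentially the paper's own argument made explicit: the paper's proof is the single line that the claim ``is direct by Eqs.~\eqref{eq:p} and \eqref{eq:cybe}'', i.e.\ precisely the expansion of the cyclic sum via $\d_r$ followed by collapse onto the cYBe. Your bookkeeping checks out --- after relabeling $r\leftrightarrow\br$ and using $r=-r^{\sigma}$ together with $[x,y]=-[y,x]$, each of $[r_{12},r_{13}]$, $[r_{13},r_{23}]$, $[r_{12},r_{23}]$ occurs exactly twice with coefficient $-1$, so the six monomials sum to $-2\bigl([r_{12},r_{13}]+[r_{13},r_{23}]+[r_{12},r_{23}]\bigr)=0$ (only note that the needed sign flips come from both the antisymmetry of $r$ and Eq.~\eqref{eq:anti}, which are distinct conditions, not ``equivalent'' ones as your parenthetical suggests).
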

 \begin{proof}
 It is direct by Eqs.(\ref{eq:p}) and (\ref{eq:cybe}).
 \end{proof}

 \begin{thm}\label{thm:con} Let $(\ll, \d, r)$ be a weak cosymplectic Lie coalgebra and $\om\in(\ll\o \ll)^*$. If $(\ll, \d, \om, [,]_{\om})$ is a dual quasitriangular Lie bialgebra with the multiplication $[,]_{\om}$ defined in Eq.(\ref{eq:cp}) and further, $(\ll, [,]_{\om}, r, \d_r)$ is a quasitriangular Lie bialgebra together with the comultiplication $\d_r$ defined in Eq.(\ref{eq:p}), then $((\ll, \d), P)$ is a Nijenhuis Lie coalgebra, where $P$ is defined by
 \begin{equation*}
 P(x)=r^1\om(r^2, x),~~\forall~x\in \ll.
 \end{equation*}
 \end{thm}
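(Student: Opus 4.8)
The plan is to read this as the exact dual of Theorem \mref{thm:n} and to transcribe that proof into the Lie coalgebra setting, with products and coproducts interchanged and with $\om$ and $r$ exchanging their roles. Three facts are at hand. Since $(\ll, [,]_\om, r, \d_r)$ is quasitriangular, $r$ is an antisymmetric solution of the classical Yang-Baxter equation \meqref{eq:cybe} relative to the bracket $[,]_\om$. Since $(\ll, \d, \om, [,]_\om)$ is dual quasitriangular, $\om$ is a skew-symmetric solution of the classical co-Yang-Baxter equation \meqref{eq:caybe} relative to $\d$. And $(\ll, \d, r)$ is weak cosymplectic. What must be checked is the \N Lie coalgebra identity \meqref{eq:nc} for $P(x)=r^1\om(r^2, x)$.

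First I would produce the co-analogue of the auxiliary identity \meqref{eq:73}. Starting from the component form of the cYBe, $[r^1, \br^1]_\om\o r^2\o \br^2+r^1\o \br^1\o [r^2, \br^2]_\om+r^1\o [r^2, \br^1]_\om\o \br^2=0$, I would substitute the defining formula \meqref{eq:cp} for each $[\cdot, \cdot]_\om$ and then reduce the resulting sum of rank-three tensors using the antisymmetry of $r$, the skew-symmetry of $\om$, and the co-antisymmetry \meqref{eq:coanti}, just as the derivation of \meqref{eq:73} expanded the caybe through $\d_r$. Invoking the weak cosymplectic relation of $(\ll,\d,r)$ to pair off the cyclic terms then collapses the expression into a single identity tying together the coproducts of the two legs of $r$ against $\om$; this is the precise dual of \meqref{eq:73}.

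Next I would expand the four tensors in \meqref{eq:nc}. Using $\d(P(x))=\om(r^2, x)\,\d(r^1)$ and iterating $P$, the summands $P(x_{(1)})\o P(x_{(2)})$, $P^2(x)_{(1)}\o P^2(x)_{(2)}$, $P(P(x)_{(1)})\o P(x)_{(2)}$ and $P(x)_{(1)}\o P(P(x)_{(2)})$ turn into expressions carrying, respectively, the coproduct $\d(x)$ and the coproducts of the legs of $r$, each weighted by nested evaluations of $\om$. The summand containing $\d(x)$ is the obstructing one: applying the caybe \meqref{eq:caybe} with the two $r$-legs placed in its free slots rewrites it entirely in terms of coproducts of $r$. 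After that substitution every term lives over the coproducts of $r$, and I would cancel them against one another using the co-identity from the previous step together with the weak cosymplectic relation, reaching $0$. This chain is dual to the use of \meqref{eq:73}, \meqref{eq:cosymp} and \meqref{eq:cybe} at the end of the proof of Theorem \mref{thm:n}.

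The main difficulty I anticipate is bookkeeping rather than conceptual: keeping the Sweedler indices of $\d(r^1)$, $\d(r^2)$ and $\d(x)$ separated, and controlling the sign generated every time the antisymmetry of $r$ or the skew-symmetry of $\om$ is used to relabel a leg. In particular, selecting the correct cyclic instance of the weak cosymplectic relation so that the surviving coproduct terms match up exactly is the step most prone to a sign or index slip, and it is the one I would verify with the greatest care.
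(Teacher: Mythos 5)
Your proposal matches the paper exactly: the paper's entire proof of Theorem \ref{thm:con} reads ``Similar to Theorem \ref{thm:n}'', i.e.\ it is precisely the dualization you describe, with the cYBe in $[,]_{\om}$ expanded through Eq.~(\ref{eq:cp}) playing the role of the classical co-Yang-Baxter equation expanded through $\d_r$, and the weak cosymplectic relation and Eq.~(\ref{eq:caybe}) replacing Eqs.~(\ref{eq:cosymp}) and (\ref{eq:cybe}) in the final cancellation. Your transcription plan, including the co-analogue of the auxiliary identity (\ref{eq:73}), is correct and is in fact more detailed than what the paper provides.
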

 \begin{proof}
 Similar to Theorem \ref{thm:n}.
 \end{proof}

\section{Nijenhuis Lie bialgebras}\label{se:nliebialg} In this section, we establish the bialgebraic theory of \N Lie algebras and introduce the notion of classical $P$-Nijenhuis Yang-Baxter equations. That is to say, we consider the following commutative diagram. \bigskip
{\small
\begin{center}
\unitlength 1mm 
\linethickness{0.4pt}
\ifx\plotpoint\undefined\newsavebox{\plotpoint}\fi 
\hspace{-3.5cm}\begin{picture}(83.5,34)(0,0)
\put(7,6){Lie algebras}
\put(69,6.5){\N Lie algebras.}
\put(3,31.75){Lie bialgebras}
\put(69,30.75){\N Lie bialgebras}
\put(83,12.5){\vector(0,1){13.5}}
\put(17,12.5){\vector(0,1){13.5}}
\put(31.75,34){+Nijenhuis operator}
\put(31.75,8.75){+Nijenhuis operator}
\put(83.5,18.5){Bialgebraization}
\put(18,18.75){Bialgebraization}
\put(29.5,32){\vector(1,0){36.25}}
\put(29.5,6.75){\vector(1,0){36.25}}
\end{picture}
\end{center}

 \subsection{Nijenhuis Lie bialgebras and equivalent characterizations}\label{se:equi}
 \begin{defi}\label{de:nliebialg} A {\bf \N Lie bialgebra} is a quintuple $((\ll, [,]), \d, N, P)$, where
  \begin{enumerate}
    \item \label{it:nliebialg1} $((\ll, [,]), N)$ is \N Lie algebra;
    \item \label{it:nliebialg2} $((\ll, \d), P)$ is \N Lie coalgebra;
    \item \label{it:nliebialg3} $(\ll, [,], \d)$ is Lie bialgebra;
    \item \label{it:nliebialg4} For all $x, y\in \ll$, the equations below hold:
 \begin{eqnarray}
 &P([N(x), y])+[x, P^2(y)]=[N(x), P(y)]+P([x, P(y)]),&\label{eq:nliebialg1}\\
 &P(N(x)_{(1)})\o N(x)_{(2)}+x_{(1)}\o N^2(x_{(2)})=P(x_{(1)})\o N(x_{(2)})+N(x)_{(1)}\o N(N(x)_{(2)}).&\label{eq:nliebialg2}
 \end{eqnarray}
 \end{enumerate}
 \end{defi}

 \begin{rmk}\label{rmk:de:nliebialg}
  \begin{enumerate}
    \item Eq.(\ref{eq:nliebialg1}) means that $((\ll, [,]), N)$ is $P$-admissible;
    \item Eq.(\ref{eq:nliebialg2}) means that $((\ll^*, \d^*), P^*)$ is $N^*$-admissible.
 \end{enumerate}
 \end{rmk}

 \begin{ex}\label{ex:nliebialg} Let $\ll=\ss$ be a Lie algebra given above. Set\smallskip
 \begin{center}$ \left\{
        \begin{array}{l}
        \d(e)=2 \l (e\o f-f\o e),\\
        \d(f)=0, \\
        \d(g)=2 \l (g\o f-f\o g)\\
        \end{array}\right. $.
             \end{center}\smallskip
        Then by Example \ref{ex:qt}, $(\ll, \d)$ is a Lie coalgebra and further $(\ll, [,], \d)$ is a Lie bialgebra. Define the linear maps $N, P:\ll\lr \ll$ as follows ($k_1, k_2, k_3$ and $\ell$ are parameters).
   \begin{enumerate}
    \item \label{it:ex:nliebialg-1} $\left\{
        \begin{array}{l}
         N(e)=k_1 e+k_2 f\\
         N(f)=k_1 f\\
         N(g)=k_1 g\\
         \end{array}
           \right.$,\quad
         $P=k_1 \id $;
    \item \label{it:ex:nliebialg-2}  $\left\{
        \begin{array}{l}
         N(e)=k_1 e+k_2 f\\
         N(f)=k_1 f\\
         N(g)=k_1 g\\
         \end{array}
           \right.$,\quad $ \left\{
        \begin{array}{l}
         P(e)=k_1 e+\ell f\\
         P(f)=k_1 f\\
         P(g)=k_1 g\\
         \end{array}
           \right.$;
    \item \label{it:ex:nliebialg-3}   $ \left\{
        \begin{array}{l}
         N(e)=k_1 e+k_2 f+k_3 g\\
         N(f)=k_1 f\\
         N(g)=k_1 g\\
         \end{array}
           \right.$,\quad $ \left\{
        \begin{array}{l}
         P(e)=k_1 e-k_2 f\\
         P(f)=k_1 f\\
         P(g)=k_1 g\\
         \end{array}
           \right.$;
    \item \label{it:ex:nliebialg-4}   $ \left\{
        \begin{array}{l}
         N(e)=k_1 e+k_2 f+k_3 g\\
         N(f)=k_1 f\\
         N(g)=k_1 g\\
         \end{array}
           \right.$,\quad $ \left\{
        \begin{array}{l}
         P(e)=k_1 e-k_2 f-k_3 g\\
         P(f)=k_1 f\\
         P(g)=k_1 g\\
         \end{array}
           \right.(k_2\neq 0, k_3\neq 0)$.
 \end{enumerate}
 Then by a direct computation we can find that $((\ll, [,]), \d, N, P)$ is a \N Lie bialgebra, where the $(N, P)$ is given in Item (\ref{it:ex:nliebialg-1})-Item (\ref{it:ex:nliebialg-4}), respectively.
 \end{ex}

 \N Lie bialgebra can be characterized by Manin triple of \N Lie algebras and matched pair of Lie algebras.

 \begin{defi}\label{de:mpnlie} A {\bf matched pair of Nijenhuis Lie algebras $((\ll, [,]_\ll), N_\ll)$ and $((\hh, [,]_\hh), N_\hh)$} is an octuple $(((\ll, [,]_\ll), N_\ll), ((\hh, [,]_\hh), N_\hh), \rho_\ll, \rho_\hh)$, where $((\hh, \rho_\ll), N_\hh)$ is a representation of $((\ll, [,]_\ll), N_\ll)$, $((\ll, \rho_\hh), N_\ll)$ is a representation of $((\hh, [,]_\hh), N_\hh)$, and $((\ll, [,]_\ll), (\hh, [,]_\hh), \rho_\ll, \rho_\hh)$ is a matched pair of Lie algebras $(\ll, [,]_\ll)$ and $(\hh, [,]_\hh)$, that is to say, the equations below hold:
 \begin{eqnarray}
 &\rho_\hh(a)[x, y]_\ll-[\rho_\hh(a)x, y]_\ll-[x, \rho_\hh(a)y]_\ll=\rho_\hh(\rho_\ll(y)a)x-\rho_\hh(\rho_\ll(x)a)y,& \label{eq:mplie1}\\
 &\rho_\ll(x)[a, b]_\hh-[\rho_\ll(x)a, b]_\hh-[a, \rho_\ll(x)b]_\hh=\rho_\ll(\rho_\hh(b)x)a-\rho_\ll(\rho_\hh(a)x)b,&\label{eq:mplie2}
 \end{eqnarray}
 where $x, y\in \ll$ and $a, b\in \hh$.
 \end{defi}

 \begin{pro}\label{pro:mpnlie} Let $((\ll, [,]_\ll), N_\ll)$ and $((\hh, [,]_\hh), N_\hh)$ be two Nijenhuis Lie algebras. Assume that $((\hh, \rho_\ll), N_\hh)$ is a representation of $((\ll, [,]_\ll), N_\ll)$ and $((\ll, \rho_\hh), N_\ll)$ is a representation of $((\hh, [,]_\hh), N_\hh)$. Define a bracket $[,]_{\star}$ on $\ll\oplus \hh$ by
 \begin{eqnarray}
 &[x+a, y+b]_{\star}=[x, y]_\ll+\rho_\hh(a)y-\rho_\hh(b)x+[a, b]_\hh+\rho_\ll(x)b-\rho_\ll(y)a&\label{eq:mpnliep}
 \end{eqnarray}
 and a linear map $N_\star$ on $\ll\oplus \hh$ by
 \begin{eqnarray}
 &N_\star(x+a)=N_\ll(x)+N_\hh(a),&\label{eq:mpnlien}
 \end{eqnarray}
 where $x, y\in \ll$ and $a, b\in \hh$. Then $((\ll\oplus \hh, [,]_{\star}), N_\star)$ is a \N Lie algebra if and only if Eqs.(\ref{eq:mplie1}) and (\ref{eq:mplie2}) hold. In this case, we denote $((\ll\oplus \hh, [,]_{\star}), N_\star)$ by $(\ll\bowtie \hh, N_\ll+N_\hh)$.
 \end{pro}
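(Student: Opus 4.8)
The plan is to reduce the statement to a pair of compatibility conditions between the bracket $[,]_\star$ of Eq.~\eqref{eq:mpnliep} and the map $N_\star$ of Eq.~\eqref{eq:mpnlien}, and to show that these conditions are governed precisely by the matched-pair axioms. First I would recall that the underlying claim, that $(\ll\oplus\hh, [,]_\star)$ is a Lie algebra if and only if Eqs.~\eqref{eq:mplie1} and~\eqref{eq:mplie2} hold (together with $(\hh,\rho_\ll)$, $(\ll,\rho_\hh)$ being representations), is the standard matched-pair theorem for Lie algebras and may be cited. So the real content is to verify, \emph{granting that $[,]_\star$ is a Lie bracket}, that $N_\star$ is a Nijenhuis operator on $(\ll\oplus\hh, [,]_\star)$ exactly when $N_\ll$, $N_\hh$ are Nijenhuis and the two representation-compatibility conditions from Definition~\ref{de:repnlie} hold.

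The key computation is to expand the Nijenhuis identity~\eqref{eq:n} for $N_\star$ on a general element $x+a$, $y+b$ of $\ll\oplus\hh$. I would substitute Eqs.~\eqref{eq:mpnliep} and~\eqref{eq:mpnlien} into
\[
[N_\star(x+a), N_\star(y+b)]_\star + N_\star^2[x+a, y+b]_\star - N_\star[N_\star(x+a), y+b]_\star - N_\star[x+a, N_\star(y+b)]_\star
\]
and then sort the resulting expression by the type of its outputs. Because $N_\star$ and $[,]_\star$ both respect the decomposition $\ll\oplus\hh$ in a graded way, the total expression splits into an $\ll$-component and an $\hh$-component, and each of these further decomposes according to how many $\hh$-arguments (resp.\ $\ll$-arguments) are involved. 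The purely-$\ll$ terms collect into the Nijenhuis identity for $N_\ll$, the purely-$\hh$ terms into the Nijenhuis identity for $N_\hh$, and the mixed terms, those involving $\rho_\ll$, $\rho_\hh$, reassemble into exactly the two representation conditions~\eqref{eq:repnlie}: one expressing that $((\hh,\rho_\ll),N_\hh)$ is a representation of $((\ll,[,]_\ll),N_\ll)$, the other that $((\ll,\rho_\hh),N_\ll)$ is a representation of $((\hh,[,]_\hh),N_\hh)$. Establishing that these mixed terms vanish iff the representation conditions hold is where I expect the bookkeeping to be heaviest.

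The main obstacle will be the mixed-term bookkeeping: when both $x+a$ and $y+b$ have nonzero components in $\ll$ and $\hh$, the four-term Nijenhuis expression produces many cross terms mixing $\rho_\ll$, $\rho_\hh$, $N_\ll$, $N_\hh$, and one must show these cancel or regroup without invoking any new hypotheses beyond~\eqref{eq:repnlie} for each representation. The cleanest way to control this is to exploit bilinearity and treat the four cases $(x,y)$, $(x,b)$, $(a,y)$, $(a,b)$ separately; the diagonal cases $(x,y)$ and $(a,b)$ give the two Nijenhuis identities directly, while the off-diagonal cases $(x,b)$ and $(a,y)$ deliver the representation conditions (using the symmetry $y\leftrightarrow b$ interchange to match the two forms). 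Having shown the full expression vanishes for all arguments iff all four conditions hold, the equivalence asserted in the proposition follows, and I would remark that Proposition~\ref{pro:repnlie} is recovered as the special case $\hh=V$ with $[,]_\hh=0$ and $N_\hh=\a$.
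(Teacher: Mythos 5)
Your proposal is correct and takes essentially the same route as the paper: the paper likewise quotes the classical matched-pair theorem for the Lie structure on $\ll\oplus\hh$ and then verifies Eq.~\eqref{eq:n} for $N_\star$ by one direct expansion on $x+a$, $y+b$, with the terms cancelling via Eq.~\eqref{eq:n} for $N_\ll$, $N_\hh$ and Eq.~\eqref{eq:repnlie} for the two representations --- exactly the componentwise bookkeeping you organize into your four bilinear cases. Your closing observation that Proposition~\ref{pro:repnlie} is recovered as the special case $\hh=V$, $[,]_\hh=0$, $\rho_\hh=0$, $N_\hh=\a$ also matches the paper's remark.
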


 \begin{proof} We note that $(\ll\oplus \hh, [,]_{\star})$ is a Lie algebra if and only if Eqs.(\ref{eq:mplie1}) and (\ref{eq:mplie2}) hold (\cite{Maj}). So we only need to check that $N_\star$ satisfies Eq.(\ref{eq:n}). In fact, For all $x, y\in \ll$ and $a, b\in \hh$, we have
 \begin{eqnarray*}
 &&\hspace{-6mm}[N_\ll(x), N_\ll(y)]_\ll+\rho_\hh(N_\hh(a))N_\ll(y)-\rho_\hh(N_\hh(b))N_\ll(x)+[N_\hh(a), N_\hh(b)]_\hh+\rho_\ll(N_\ll(x))N_\hh(b)\\
 &&-\rho_\ll(N_\ll(y))N_\hh(a)+N_\ll^2([x, y]_\ll)+N_\ll^2(\rho_\hh(a)y)-N_\ll^2(\rho_\hh(b)x)+N_\hh^2([a, b]_\hh)+N_\hh^2(\rho_\ll(x)b)\\
 &&-N_\hh^2(\rho_\ll(y)a)\\
 &&\hspace{-6mm}\stackrel{(\ref{eq:n})(\ref{eq:repnlie})}{=}N_\ll([N_\ll(x), y]_\ll)+N_\ll(\rho_\hh(N_\hh(a))y)-N_\ll(\rho_\hh(b)N_\ll(x))+N_\hh([N_\hh(a), b]_\hh)+N_\hh(\rho_\ll(N_\ll(x))b)\\
 &&-N_\hh(\rho_\ll(y)N_\hh(a))+N_\ll([x, N_\ll(y)]_\ll)+N_\ll(\rho_\hh(a)N_\ll(y))-N_\ll(\rho_\hh(N_\hh(b))x)+N_\hh([a, N_\hh(b)]_\hh)\\
 &&+N_\hh(\rho_\ll(x)N_\hh(b))-N_\hh(\rho_\ll(N_\ll(y))a).
 \end{eqnarray*}
 Then  we finish the proof.
 \end{proof}

 \begin{rmk} \label{rmk:pro:mpnlie} If $(((\ll, [,]_\ll), N_\ll), ((\hh, [,]_\hh), N_\hh), \rho_\ll, \rho_\hh)$ is a matched pair of Nijenhuis Lie algebras $((\ll, [,]_\ll)$, $N_\ll)$ and $((\hh, [,]_\hh), N_\hh)$, then one can obtain a Nijenhuis Lie algebra $((\ll\oplus \hh, [,]_{\star}), N_\star)$.
 \end{rmk}

 \begin{thm} \label{thm:matchandnlieb} Let $((\ll, [,]), N)$ and $((\ll^*, [,]_\circ:=\d^*), P^*)$ be two \N Lie algebras. Then the quintuple $((\ll, [,]), \d, N, P)$ is a \N Lie bialgebra if and only if $(((\ll, [,]), N), ((\ll^*, [,]_\circ), P^*), {ad_\ll}^*$, ${ad_{\ll^*}}^*)$ is a matched pair of Nijenhuis Lie algebras $((\ll, [,]), N)$ and $((\ll^*, [,]_\circ), P^*)$.
 \end{thm}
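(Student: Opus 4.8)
The plan is to establish the equivalence in Theorem~\ref{thm:matchandnlieb} by decomposing both sides into the ``underlying Lie'' data and the ``Nijenhuis'' data, and matching them condition by condition. The key observation is that a \N Lie bialgebra (Definition~\ref{de:nliebialg}) consists of four ingredients: the \N Lie algebra $((\ll,[,]),N)$, the \N Lie coalgebra $((\ll,\d),P)$, the Lie bialgebra compatibility~\meqref{eq:liebialg}, and the two admissibility conditions~\meqref{eq:nliebialg1}--\meqref{eq:nliebialg2}; on the other side, the matched pair of \N Lie algebras is an octuple whose data are governed by Definition~\ref{de:mpnlie}. My strategy is to invoke the classical (non-Nijenhuis) correspondence first and then reduce the extra Nijenhuis conditions to the representation-compatibility conditions of the matched pair.

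First I would recall the well-known classical fact (due to Majid, already cited in the excerpt) that $(\ll,[,],\d)$ is a Lie bialgebra if and only if $((\ll,[,]),(\ll^*,[,]_\circ),{ad_\ll}^*,{ad_{\ll^*}}^*)$ is a matched pair of Lie algebras, where $[,]_\circ=\d^*$ and the actions are the coadjoint ones. This disposes of item~(\ref{it:nliebialg3}) of Definition~\ref{de:nliebialg} against Eqs.~\meqref{eq:mplie1}--\meqref{eq:mplie2}. It remains to show, under the standing hypothesis that $((\ll,[,]),N)$ and $((\ll^*,[,]_\circ),P^*)$ are \N Lie algebras (which matches items~(\ref{it:nliebialg1})--(\ref{it:nliebialg2}) via Proposition~\ref{pro:dualacoa}), that the two admissibility equations~\meqref{eq:nliebialg1}--\meqref{eq:nliebialg2} are equivalent to the requirement that the coadjoint actions be \emph{compatible} with the Nijenhuis operators in the sense of Definition~\ref{de:mpnlie}: namely that $((\ll^*,{ad_\ll}^*),P^*)$ is a representation of $((\ll,[,]),N)$ and $((\ll,{ad_{\ll^*}}^*),N)$ is a representation of $((\ll^*,[,]_\circ),P^*)$.

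The heart of the proof is therefore to translate Eq.~\meqref{eq:repnlie} (the defining equation of a \rep of a \N Lie algebra) for these two specific coadjoint representations into Eqs.~\meqref{eq:nliebialg1}--\meqref{eq:nliebialg2}. For the first representation I would write out~\meqref{eq:repnlie} with $\rho={ad_\ll}^*$, $\a=P^*$ and pair against an arbitrary $y\in\ll$; by the definition of ${ad_\ll}^*$ and the duality $\langle P^*(\cdot),\cdot\rangle=\langle\cdot,P(\cdot)\rangle$, this should collapse precisely to~\meqref{eq:nliebialg1}, i.e.\ to the $P$-admissibility of $((\ll,[,]),N)$ (cf.\ Remark~\ref{rmk:de:nliebialg}(1) and Lemma~\ref{lem:dualrep}, Eq.~\meqref{eq:dualrep-1}). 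Symmetrically, the \rep condition for $((\ll,{ad_{\ll^*}}^*),N)$ over $((\ll^*,[,]_\circ),P^*)$ should dualize to~\meqref{eq:nliebialg2}, which by Remark~\ref{rmk:de:nliebialg}(2) is exactly the statement that $((\ll^*,\d^*),P^*)$ is $N^*$-admissible. Since Lemma~\ref{lem:dualrep} already gives the clean dictionary between Eq.~\meqref{eq:repnlie} for a dual representation and the admissibility condition~\meqref{eq:dualrep-1}, each of these two reductions is essentially a direct application of that lemma with the two roles of $(\ll,N)$ and $(\ll^*,P^*)$ interchanged.

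The main obstacle I anticipate is purely bookkeeping rather than conceptual: keeping the Sweedler indices, the coadjoint sign conventions, and the transpose identities consistent when one of the two algebras is the dual of the other, so that Eq.~\meqref{eq:nliebialg2}—which lives on $\ll$ but encodes an $N^*$-admissibility statement on $\ll^*$—comes out with the correct placement of $P$ versus $N$ in each tensor leg. I would handle this by treating the two admissibility conditions completely in parallel, using Proposition~\ref{pro:dualacoa} to pass freely between $N$-data on $\ll$ and $P^*$-data on $\ll^*$, and invoking Lemma~\ref{lem:dualrep} as a black box for each direction rather than re-expanding the pairings by hand. Assembling the classical matched-pair equivalence with these two admissibility translations then yields both directions of the stated ``if and only if,'' completing the proof.
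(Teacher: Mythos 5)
Your proposal is correct and takes essentially the same route as the paper's own proof, which likewise invokes the classical equivalence between Lie bialgebras and matched pairs via the coadjoint actions (citing Majid and \cite{BGLM}) and then settles the Nijenhuis part by Remark~\ref{rmk:de:nliebialg}, i.e.\ exactly the translation of Eqs.~(\ref{eq:nliebialg1})--(\ref{eq:nliebialg2}) into $P$-admissibility and $N^*$-admissibility that you carry out through Lemma~\ref{lem:dualrep}. The only difference is presentational: the paper compresses these reductions into a one-line citation, whereas you spell out the duality bookkeeping (including the role of Proposition~\ref{pro:dualacoa}) explicitly.
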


 \begin{proof} For Lie algebras, we have (\cite{Maj,BGLM}): $((\ll, [,]), \d)$ is a Lie bialgebra if and only if $((\ll, [,]), (\ll^*, [,]_\circ)$, ${ad_\ll}^*, {ad_{\ll^*}}^*)$ is a matched pair of Lie algebras $(\ll, [,])$ and $(\ll^*, [,]_\circ)$, where Eq.(\ref{eq:mplie1})=Eq.(\ref{eq:mplie2}) $\Leftrightarrow$ Eq.(\ref{eq:liebialg}). The rest can be obtained by Remark \ref{rmk:de:nliebialg}.
 \end{proof}

 Let $(\ll, [, ])$ be a Lie algebra, $\frakB: \ll\o \ll\lr K$ be a nondegenerate bilinear form. If further $\frakB$ is invariant, i.e., for all $x, y, z\in \ll$, $\frakB([x, y], z)=\frakB(x, [y, z])$. In this case, the triple $((\ll, [,]), \frakB)$ is called a {\bf Frobenius Lie algebra}. A Frobenius algebra $((\ll, [,]), \frakB)$ is {\bf symmetric} if $\frakB$ is symmetric.
 \begin{defi}\label{de:nflie} A {\bf Nijenhuis  Frobenius Lie algebra} is a quadruple $(((\ll, [,]), N), \frakB)$ consisting of a Nijenhuis Lie algebra $((\ll, [,]), N)$ and a Frobenius Lie algebra $((\ll, [,]), \frakB)$.

 Let $\hat{N}: \ll\lr \ll$ denote the adjoint operator of $N$ by the following way:
 \begin{eqnarray*}
 &\mathfrak{B}(N(x), y)=\mathfrak{B}(x, \hat N(y)),\;\;\forall x, y\in \ll.&\label{eq:adjoint}
 \end{eqnarray*}
 \end{defi}

 \begin{pro}\label{pro:frobadm} Let $(((\ll, [,]), N), \frakB)$ be a symmetric Nijenhuis Frobenius Lie algebra. Then $((\ll, [,])$, $N)$ is ${\hat N}$-admissible.
 \end{pro}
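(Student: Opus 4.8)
The plan is to establish the $\hat N$-admissibility identity, that is, Eq.(\ref{eq:dualrep-1}) with $P=\hat N$,
\[
\hat N([N(x),y])+[x,\hat N^2(y)]=[N(x),\hat N(y)]+\hat N([x,\hat N(y)]),
\]
by pairing both sides against an arbitrary $z\in\ll$ under the nondegenerate form $\frakB$ and reducing everything to the Nijenhuis condition Eq.(\ref{eq:n}). First I would record three auxiliary relations. From the defining property $\frakB(N(u),v)=\frakB(u,\hat N(v))$ together with the symmetry of $\frakB$ one gets the transpose-on-the-other-slot relation $\frakB(\hat N(u),v)=\frakB(u,N(v))$, and iterating it, $\frakB(\hat N^2(u),v)=\frakB(u,N^2(v))$. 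From the invariance $\frakB([a,b],c)=\frakB(a,[b,c])$ together with the antisymmetry Eq.(\ref{eq:anti}) one also gets the skew companion $\frakB([a,b],c)=-\frakB(b,[a,c])$.

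Next I would pair the left-hand side with $z$. Applying the transpose relation to move $\hat N$ off the first term (replacing it by $N(z)$) and then the skew invariance to pull the bracket onto $z$ turns $\frakB(\hat N([N(x),y]),z)$ into $-\frakB(y,[N(x),N(z)])$; applying the skew invariance together with the $\hat N^2$-relation turns $\frakB([x,\hat N^2(y)],z)$ into $-\frakB(y,N^2([x,z]))$. Thus the left-hand side pairs to $-\frakB\big(y,[N(x),N(z)]+N^2([x,z])\big)$. Handling the right-hand side in exactly the same fashion gives $-\frakB\big(y,N([N(x),z])+N([x,N(z)])\big)$.

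Finally, since $y$ and $z$ are arbitrary and $\frakB$ is nondegenerate, the admissibility identity is equivalent to
\[
[N(x),N(z)]+N^2([x,z])=N([N(x),z])+N([x,N(z)])\quad\text{for all }x,z\in\ll,
\]
which is precisely the Nijenhuis condition Eq.(\ref{eq:n}) for $((\ll,[,]),N)$; hence it holds and the proof is complete. The only real obstacle is the sign bookkeeping: in each of the four terms one must select the correct version of invariance (the symmetric form $\frakB([a,b],c)=\frakB(a,[b,c])$ versus its skew companion) so that every term is brought into the common shape $-\frakB(y,\cdot)$. I emphasize that the symmetry hypothesis on $\frakB$ is used crucially in the transpose relation $\frakB(\hat N(u),v)=\frakB(u,N(v))$, which is exactly what allows $\hat N$ to be converted back to $N$ on the opposite argument.
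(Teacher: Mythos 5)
Your proof is correct and essentially the same as the paper's: both reduce the $\hat N$-admissibility identity Eq.(\ref{eq:dualrep-1}) (with $P=\hat N$) to the Nijenhuis condition Eq.(\ref{eq:n}) by pairing against an arbitrary third element, shuttling $N$, $\hat N$ and the brackets across $\frakB$ via the adjoint relation and invariance, and concluding by nondegeneracy. The paper merely runs the identical computation in the opposite direction, starting from Eq.(\ref{eq:n}) paired with $z$ and extracting in the first slot of $\frakB$ (which lets it avoid the skew companion of invariance and the explicit transpose relation $\frakB(\hat N(u),v)=\frakB(u,N(v))$ that you derive from symmetry), so the difference is purely one of sign bookkeeping.
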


 \begin{proof} It is natural that $(\ll^*, ad^*)$ is a representation of $(\ll, [,])$. In what follows, we only need to prove that Eq.(\ref{eq:dualrep-1}) holds for $ad$ and ${\hat N}$. In fact, for all $x, y, z\in \ll$,
 \begin{eqnarray*}
 0&\stackrel{(\ref{eq:n})}{=}&\frakB([N(x), N(y)], z)+\frakB(N^2([x, y]), z)-\frakB(N([x, N(y)]), z)-\frakB(N([N(x), y]), z)\\
 &=&\frakB(N(x), [N(y), z])+\frakB([x, y], {\hat N}^2(z))-\frakB([x, N(y)], {\hat N}(z))-\frakB([N(x), y], {\hat N}(z))\\
 &=&\frakB(x, {\hat N}([N(y), z]))+\frakB(x, [y, {\hat N}^2(z)])-\frakB(x, [N(y), {\hat N}(z)])-\frakB(x, {\hat N}([y, {\hat N}(z)])),
 \end{eqnarray*}
 so the proof is finished.
 \end{proof}

 \begin{defi} \label{de:manintriple} Let $((\ll, [,]), N)$ and $((\ll^*, [,]_\circ), P^*)$ be two \N Lie algebras. Assume that there is a Lie algebra structure $[,]_\star$ on $\ll\oplus \ll^*$ which contains both $(\ll, [,])$ and $(\ll^*, [,]_{\circ})$ as Lie subalgebras. Define a bilinear form on $\ll\oplus \ll^*$ by
 \begin{eqnarray}
 &\frakB_d(x+a^*, y+b^*)=\langle a^*, y\rangle+\langle b^*, x\rangle,~\forall~x, y\in \ll, a^*, b^*\in \ll^*.&\label{eq:biform}
 \end{eqnarray}
 If $\frakB_d$ is invariant, then $((\ll\oplus \ll^*, [,]_\star), \frakB_d)$ is a symmetric Frobenius Lie algebra. We call this Frobenius Lie algebra a {\bf Manin triple of a Frobenius Lie algebra associated to $(\ll, [,])$ and $(\ll^*, [,]_{\circ})$} and denote it by $(\ll\bowtie \ll^*, \frakB_d)$. If further $((\ll\bowtie \ll^*, N+P^*), \frakB_d)$ is a Nijenhuis Frobenius Lie algebra, then we call $((\ll\bowtie \ll^*, N+P^*), \frakB_d)$ a {\bf Manin triple of a Nijenhuis Frobenius Lie algebra associated to $((\ll, [,]), N)$ and $((\ll^*, [,]_{\circ}), P^*)$}.
 \end{defi}

 \begin{lem}\label{lem:abas} Let $((\ll\bowtie \ll^*, N+P^*), \frakB_d)$ be a Manin triple of a Nijenhuis Frobenius Lie algebra associated to $((\ll, [,]), N)$ and $((\ll^*, [,]_{\circ}), P^*)$. Then
 \begin{enumerate}
   \item The adjoint $\widehat{N+P^*}$ of $N+P^*$ with respect to $\frakB_d$ is $P+ N^*$. Further $(\ll\bowtie \ll^*, N+P^*)$ is $(P+N^*)$-admissible.\label{it:abas1}
   \item $((\ll, [,]), N)$ is $P$-admissible.\label{it:abas2}
   \item $((\ll^*, [,]_\circ), P^*)$ is $N^*$-admissible.\label{it:abas3}
 \end{enumerate}
 \end{lem}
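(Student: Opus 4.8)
The plan is to compute the adjoint $\widehat{N+P^*}$ of the operator $N+P^*$ with respect to the bilinear form $\frakB_d$ defined in Eq.(\ref{eq:biform}), and then invoke Proposition \ref{pro:frobadm} together with the known admissibility relations to derive the three conclusions. First I would verify Item (\ref{it:abas1}). By definition of the adjoint, $\widehat{N+P^*}$ is the unique operator satisfying $\frakB_d((N+P^*)(x+a^*), y+b^*)=\frakB_d(x+a^*, \widehat{N+P^*}(y+b^*))$ for all $x, y\in \ll$ and $a^*, b^*\in \ll^*$. Expanding the left-hand side using $(N+P^*)(x+a^*)=N(x)+P^*(a^*)$ and the explicit pairing formula $\frakB_d(x+a^*, y+b^*)=\langle a^*, y\rangle+\langle b^*, x\rangle$, I would compute
\begin{eqnarray*}
\frakB_d(N(x)+P^*(a^*), y+b^*)&=&\langle P^*(a^*), y\rangle+\langle b^*, N(x)\rangle\\
&=&\langle a^*, P(y)\rangle+\langle N^*(b^*), x\rangle.
\end{eqnarray*}
On the other side, $\frakB_d(x+a^*, P(y)+N^*(b^*))=\langle a^*, P(y)\rangle+\langle N^*(b^*), x\rangle$, which matches term by term. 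Hence $\widehat{N+P^*}=P+N^*$. The admissibility of $(\ll\bowtie \ll^*, N+P^*)$ with respect to $P+N^*$ then follows immediately from Proposition \ref{pro:frobadm}, since $((\ll\bowtie \ll^*, N+P^*), \frakB_d)$ is by hypothesis a symmetric Nijenhuis Frobenius Lie algebra.

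For Items (\ref{it:abas2}) and (\ref{it:abas3}), the strategy is to restrict the $(P+N^*)$-admissibility condition Eq.(\ref{eq:dualrep-1}) on $\ll\bowtie \ll^*$ to the two Lagrangian subspaces $\ll$ and $\ll^*$. Writing $\mathcal{N}=N+P^*$ and $\widehat{\mathcal{N}}=P+N^*$, the admissibility equation reads $\widehat{\mathcal{N}}([\mathcal{N}(u), w]_\star)+[u, \widehat{\mathcal{N}}^2(w)]_\star=[\mathcal{N}(u), \widehat{\mathcal{N}}(w)]_\star+\widehat{\mathcal{N}}([u, \widehat{\mathcal{N}}(w)]_\star)$ for $u, w\in \ll\oplus \ll^*$. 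Specializing $u=x$ and $w=y$ both in $\ll$, and using that $\ll$ is a Lie subalgebra so that $[x, y]_\star=[x, y]\in \ll$ together with the fact that $\mathcal{N}$ and $\widehat{\mathcal{N}}$ restrict as $\mathcal{N}|_\ll=N$, $\widehat{\mathcal{N}}|_\ll=P$, the equation collapses exactly to Eq.(\ref{eq:dualrep-1}) for $N$ and $P$ on $\ll$, which is precisely the statement that $((\ll, [,]), N)$ is $P$-admissible. An entirely symmetric argument, taking $u=a^*$ and $w=b^*$ in $\ll^*$ and using $\mathcal{N}|_{\ll^*}=P^*$, $\widehat{\mathcal{N}}|_{\ll^*}=N^*$, yields Eq.(\ref{eq:dualrep-1}) for $P^*$ and $N^*$ on $(\ll^*, [,]_\circ)$, giving Item (\ref{it:abas3}).

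The main obstacle I anticipate is Items (\ref{it:abas2}) and (\ref{it:abas3}) rather than Item (\ref{it:abas1}): the computation of the adjoint is a short symmetric pairing manipulation, but the restriction argument requires care about whether the bracket $[,]_\star$ genuinely closes on each subspace and whether the cross terms (involving the representations $\rho_\ll, \rho_\hh$ from the matched-pair structure) vanish upon restriction. Because $\ll$ and $\ll^*$ are subalgebras, the outputs $[\mathcal{N}(x), y]_\star$ and its companions land in $\ll$, so the mixed representation terms do not enter; confirming this cleanly is the delicate point. Once the restriction is justified, Items (\ref{it:abas2}) and (\ref{it:abas3}) are immediate specializations of Item (\ref{it:abas1}).
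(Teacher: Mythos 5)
Your proposal is correct and takes essentially the same route as the paper: the identical pairing computation identifies $\widehat{N+P^*}=P+N^*$ and Proposition \ref{pro:frobadm} gives Item (\ref{it:abas1}), while your restriction to the subalgebras $\ll$ and $\ll^*$ is exactly the paper's specialization $a^*=b^*=0$ (resp.\ $x=y=0$) in the admissibility identity Eq.(\ref{eq:dualrep-1}) for $N+P^*$ and $P+N^*$. The closure issue you flag as delicate is already guaranteed by Definition \ref{de:manintriple}, which requires both $(\ll,[,])$ and $(\ll^*,[,]_\circ)$ to be Lie subalgebras of $(\ll\oplus\ll^*,[,]_\star)$, so the mixed terms vanish just as in the paper's argument.
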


 \begin{proof} (\ref{it:abas1}) For all $x, y\in \ll$ and $a^*, b^*\in \ll^*$, we have
 \begin{eqnarray*}
 \frakB((N+P^*)(x+a^*), y+b^*)
 &=&\langle b^*, N(x)\rangle+\langle P^*(a^*), y\rangle\\
 &=&\langle N^*(b^*), x\rangle+\langle a^*, P(y)\rangle=\frakB(x+a^*, (P+N^*)(y+b^*)).
 \end{eqnarray*}
 Then by Proposition \ref{pro:frobadm}, the proof of Item (\ref{it:abas1}) is finished.

 (\ref{it:abas2}) Since $(\ll\bowtie \ll^*, N+P^*)$ is $(P+N^*)$-admissible, by Eq.(\ref{eq:dualrep-1}), for $x, y\in \ll, a^*, b^*\in \ll^*$, one has
 \begin{eqnarray}
 &&(P+N^*)[(N(x)+P^*(a^*)), (y+b^*)]+[(x+a^*), (P^2(y)+N^{*2}(b^*))]\nonumber\\
 &&=[(N(x)+P^*(a^*)), (P(y)+N^*(b^*))]+(P+N^*)[(x+a^*),(P(y)+N^*(b^*))]. \label{eq:dualrep-2}
 \end{eqnarray}
 Then Item~(\ref{it:abas2}) can be obtained by lettimg $a^*=b^*=0$ in Eq. (\ref{eq:dualrep-2}).

 (\ref{it:abas3}) Similarly, Item~(\ref{it:abas3}) can be proved by setting $x=y=0$ in Eq. (\ref{eq:dualrep-2}).
 \end{proof}

 \begin{thm}\label{thm:matchandmanin} Let $((\ll, [,]), N)$ and $((\ll^*, [,]_\circ), P^*)$ be two \N Lie algebras. Then there is a Manin triple $((\ll\bowtie \ll^*, N+P^*), \frakB_d)$ of a Nijenhuis Frobenius Lie algebra associated to $((\ll, [,]), N)$ and $((\ll^*, [,]_{\circ})$, $P^*)$ if and only if $(((\ll, [,]), N), ((\ll^*, [,]_\circ), P^*), {ad_\ll}^*, {ad_{\ll^*}}^*)$ is a matched pair of Nijenhuis Lie algebras $((\ll, [,]), N)$ and $((\ll^*, [,]_\circ), P^*)$.
 \end{thm}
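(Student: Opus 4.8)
The plan is to reduce the statement to the already-established correspondence for the underlying Lie algebras and then to layer the Nijenhuis data on top, using Proposition \ref{pro:mpnlie} and Lemma \ref{lem:abas} as the two technical engines, with Lemma \ref{lem:dualrep} as the translation dictionary. At the level of plain Lie algebras it is classical (\cite{Maj}, and as recorded inside the proof of Theorem \ref{thm:matchandnlieb}) that a Lie bracket $[,]_\star$ on $\ll\oplus \ll^*$ which contains $(\ll, [,])$ and $(\ll^*, [,]_\circ)$ as subalgebras and makes $\frakB_d$ invariant is equivalent to $((\ll, [,]), (\ll^*, [,]_\circ), {ad_\ll}^*, {ad_{\ll^*}}^*)$ being a matched pair of Lie algebras, i.e. to Eqs.(\ref{eq:mplie1})--(\ref{eq:mplie2}). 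This is precisely the shared ``underlying layer'' of the two sides of the asserted equivalence, so I would invoke it first and then only have to account for the extra operator $N+P^*$.

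For the direction ($\Leftarrow$), assume $(((\ll, [,]), N), ((\ll^*, [,]_\circ), P^*), {ad_\ll}^*, {ad_{\ll^*}}^*)$ is a matched pair of \N Lie algebras. The classical correspondence already supplies the Lie algebra $(\ll\bowtie\ll^*, [,]_\star)$ together with the invariant symmetric form $\frakB_d$ of Eq.(\ref{eq:biform}), that is, a Manin triple of a Frobenius Lie algebra. It then remains only to check that $N_\star=N+P^*$ is a Nijenhuis operator on $(\ll\bowtie\ll^*, [,]_\star)$, and this is exactly Proposition \ref{pro:mpnlie} applied with $\hh=\ll^*$, $N_\hh=P^*$, $\rho_\ll={ad_\ll}^*$, $\rho_\hh={ad_{\ll^*}}^*$: the hypotheses of that proposition are the two \N representation conditions, which a matched pair of \N Lie algebras furnishes by definition, and Eqs.(\ref{eq:mplie1})--(\ref{eq:mplie2}) hold by assumption. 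Hence $((\ll\bowtie\ll^*, N+P^*), \frakB_d)$ is a \N Frobenius Lie algebra, the desired Manin triple.

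For the direction ($\Rightarrow$), suppose we are given a Manin triple $((\ll\bowtie\ll^*, N+P^*), \frakB_d)$. Its underlying Lie-algebra Manin triple yields, by the classical correspondence, the matched pair $((\ll, [,]), (\ll^*, [,]_\circ), {ad_\ll}^*, {ad_{\ll^*}}^*)$, so Eqs.(\ref{eq:mplie1})--(\ref{eq:mplie2}) are satisfied; what must still be produced are the two \N representation conditions of Definition \ref{de:mpnlie}. Here I would invoke Lemma \ref{lem:abas}, whose Items (\ref{it:abas2}) and (\ref{it:abas3}) show that $N+P^*$ being a Nijenhuis operator on the symmetric Frobenius algebra forces $((\ll, [,]), N)$ to be $P$-admissible and $((\ll^*, [,]_\circ), P^*)$ to be $N^*$-admissible. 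By Lemma \ref{lem:dualrep} applied in $\ll$, $P$-admissibility of $((\ll, [,]), N)$ is equivalent to $((\ll^*, {ad}^*), P^*)$ being a representation of $((\ll, [,]), N)$, which is the first required condition; dually, applying Lemma \ref{lem:dualrep} to the \N Lie algebra $((\ll^*, [,]_\circ), P^*)$ and identifying $\ll^{**}\cong\ll$, $N^{**}\cong N$, the $N^*$-admissibility gives that $((\ll, {ad_{\ll^*}}^*), N)$ is a representation of $((\ll^*, [,]_\circ), P^*)$, the second required condition. All the data of a matched pair of \N Lie algebras is then present.

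The step I expect to be the main obstacle is the bookkeeping in the dictionary of Lemma \ref{lem:dualrep}: one must keep straight which algebra acts, which coadjoint action (${ad_\ll}^*$ versus ${ad_{\ll^*}}^*$) is in play, and which operator ($P$ versus $P^*$, $N$ versus $N^*$) occupies the role of $\b$ in Definition \ref{de:repnlie}, together with the dualization $\ll^{**}\cong\ll$ used when the lemma is read in $\ll^*$. Once the equivalence ``$P$-admissibility of $((\ll,[,]),N)$ $\Leftrightarrow$ $((\ll^*, {ad}^*), P^*)$ is a \N representation'' and its $\ll^*$-dual are pinned down, both implications close with no computation beyond what is already contained in Proposition \ref{pro:mpnlie} and Lemma \ref{lem:abas}.
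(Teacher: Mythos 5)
Your proposal is correct and takes essentially the same route as the paper: the paper's proof likewise reduces to the classical Lie-algebra correspondence between Manin triples and matched pairs and then settles the Nijenhuis layer by citing Proposition \ref{pro:mpnlie} (your $\Leftarrow$ direction) and Lemma \ref{lem:abas} (your $\Rightarrow$ direction). Your explicit appeal to Lemma \ref{lem:dualrep}, with the identification $\ll^{**}\cong\ll$, merely spells out the admissibility-to-representation translation that the paper leaves implicit via Definition \ref{de:admop} and Remark \ref{rmk:de:nliebialg}.
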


 \begin{proof} For Lie algebras, we have (\cite{Maj,BGLM}): there is a Manin triple $(\ll\bowtie \ll^*, \frakB_d)$ of a Frobenius Lie algebra associated to $(\ll, [,])$ and $(\ll^*, [,]_{\circ})$ if and only if $((\ll, [,]), (\ll^*, [,]_\circ), {ad_\ll}^*, {ad_{\ll^*}}^*)$ is a matched pair of Lie algebras $(\ll, [,])$ and $(\ll^*, [,]_\circ)$. The rest can be obtained by Proposition \ref{pro:mpnlie} and Lemma \ref{lem:abas}.
 \end{proof}

 \subsection{Enrichment of classical Yang-Baxter equations}\label{se:cybe}
 Let $((\ll, [,]), N)$ be a $P$-admissible Nijenhuis Lie algebra, $r$ be an antisymmetric solution of the cYBe in $(\ll, [,])$. Then by Remark \ref{rmk:qt}, we can get a quasitriangular Lie bialgebra $(\ll, [,], r, \d_r)$. Thus in this case, by Remark \ref{rmk:de:nliebialg}, $((\ll, [,]), \d_r, N, P)$ is a \N Lie bialgebra if and only if Eqs.(\ref{eq:nc}) and (\ref{eq:nliebialg2}) hold. In what follows, we continue to study these two conditions by using the element $r$. Eqs.(\ref{eq:nc-1}) and (\ref{eq:nliebialg2-1})
 can be seen as the compatible conditions between the cYBe and Nijenhuis operators $N$ on
 Lie algebra and $P$ on Lie coalgebra.
 \begin{thm} \label{thm:pq} Let $((\ll, [,]), N)$ be a $P$-admissible Nijenhuis Lie algebra and $\d:=\d_r$ given by Eq.(\ref{eq:p}). Then
 \begin{enumerate}
   \item \label{it:pq1} Eq.(\ref{eq:nc}) holds  if and only if, for all $x\in \ll$,
    \begin{eqnarray}
     &&(\id\o P\circ ad_x-\id\o ad_{P(x)})((P\o \id-\id\o N)(r)) \label{eq:nc-1}\\
     &&\qquad+(ad_{P(x)}\o \id-P\circ ad_x\o \id)((N\o \id-\id\o P)(r))=0.\nonumber
    \end{eqnarray}
   \item \label{it:pq2} Eq.(\ref{eq:nliebialg2}) holds if and only if, for all $x\in \ll$,
    \begin{eqnarray}
     &&(\id\o ad_{N(x)}-\id\o N\circ ad_x+ad_{N(x)}\o \id+P\circ ad_x\o \id)((P\o \id-\id\o N)(r))\label{eq:nliebialg2-1}\\
     &&\hspace{75mm}=(ad_x\circ P^2\o \id-ad_x\o N^2)(r).\nonumber
    \end{eqnarray}
 \end{enumerate}
 \end{thm}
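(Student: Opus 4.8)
The plan is to substitute the explicit coproduct $\d_r(w)=r^1\o[w,r^2]+[w,r^1]\o r^2$ (equivalently $\d_r(w)=(\id\o ad_w+ad_w\o\id)(r)$) into each side of Eq.(\ref{eq:nc}) and Eq.(\ref{eq:nliebialg2}), expand every summand by applying $N,P,N^2,P^2$ to the appropriate tensor legs, and then compare the result with the expanded forms of Eq.(\ref{eq:nc-1}) and Eq.(\ref{eq:nliebialg2-1}). The only ingredients will be the standing $P$-admissibility hypothesis Eq.(\ref{eq:dualrep-1}), the Nijenhuis identity Eq.(\ref{eq:n}) for $N$, and the antisymmetry Eq.(\ref{eq:anti}) of the bracket. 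It is worth noting in advance that neither the antisymmetry of $r$ nor the cYBe is needed for either equivalence; those are what guarantee that $\d_r$ is a Lie coalgebra compatible with $[,]$, but they do not enter these two computations.

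For Item (\ref{it:pq1}), I first expand the four terms of Eq.(\ref{eq:nc}), e.g. $P(x_{(1)})\o P(x_{(2)})=P(r^1)\o P([x,r^2])+P([x,r^1])\o P(r^2)$ and $P^2(x)_{(1)}\o P^2(x)_{(2)}=r^1\o[P^2(x),r^2]+[P^2(x),r^1]\o r^2$, with analogous expressions for the two terms on the right. Forming the difference of the two sides, four of the eight summands cancel verbatim against summands of the expansion of Eq.(\ref{eq:nc-1}). The residual discrepancy splits by tensor leg: the first-leg part is $\big([P^2(x),r^1]-P([P(x),r^1])-[P(x),N(r^1)]+P([x,N(r^1)])\big)\o r^2$, and the second-leg part is the symmetric expression $r^1\o(\,\cdots\,)$ with $r^2$ in place of $r^1$. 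Each bracketed factor is exactly Eq.(\ref{eq:dualrep-1}) evaluated with $r^1$ (respectively $r^2$) in the first argument and $x$ in the second, after rewriting the four brackets by the antisymmetry Eq.(\ref{eq:anti}). Since $((\ll,[,]),N)$ is $P$-admissible by hypothesis, both factors vanish, so the difference of the two sides of Eq.(\ref{eq:nc}) equals the left-hand side of Eq.(\ref{eq:nc-1}); the stated equivalence follows at once.

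For Item (\ref{it:pq2}), the same substitution turns Eq.(\ref{eq:nliebialg2}) into eight summands such as $P(N(x)_{(1)})\o N(x)_{(2)}=P(r^1)\o[N(x),r^2]+P([N(x),r^1])\o r^2$ and $x_{(1)}\o N^2(x_{(2)})=r^1\o N^2([x,r^2])+[x,r^1]\o N^2(r^2)$. After cancelling the five summands that coincide with those of the expanded Eq.(\ref{eq:nliebialg2-1}), the leftover terms again separate by leg, but now the two legs are governed by different identities. The second-leg contribution collects to $r^1\o\big(N^2([x,r^2])-N([N(x),r^2])+[N(x),N(r^2)]-N([x,N(r^2)])\big)$, which is $r^1\o 0$ by the Nijenhuis identity Eq.(\ref{eq:n}) applied to $x$ and $r^2$. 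The first-leg contribution collects to $\big(P([N(x),r^1])-[N(x),P(r^1)]-P([x,P(r^1)])+[x,P^2(r^1)]\big)\o r^2$, which vanishes by Eq.(\ref{eq:dualrep-1}) with first argument $x$ and second argument $r^1$. Together these show Eq.(\ref{eq:nliebialg2}) $\Leftrightarrow$ Eq.(\ref{eq:nliebialg2-1}).

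The conceptual content is light once the expansion is organised by tensor leg; consequently the main obstacle is purely the bookkeeping. Each side of each equation expands into eight terms, and one must track the exact placement of $N,P,N^2,P^2$ on the two legs and verify that precisely the four (respectively five) ``diagonal'' pairs cancel, so that the residue is a single admissibility/Nijenhuis expression attached to one leg. The clean conclusion is that Item (\ref{it:pq1}) is controlled entirely by $P$-admissibility, while in Item (\ref{it:pq2}) the two legs are controlled separately by the Nijenhuis identity for $N$ and by $P$-admissibility.
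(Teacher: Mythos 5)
Your proposal is correct and follows essentially the same route as the paper: substitute $\d_r(w)=r^1\o[w,r^2]+[w,r^1]\o r^2$ into Eqs.~(\ref{eq:nc}) and (\ref{eq:nliebialg2}), expand, and absorb the residual terms using the $P$-admissibility identity (\ref{eq:dualrep-1}) (applied with $r^1$, resp.\ $r^2$, in the appropriate argument, via the antisymmetry (\ref{eq:anti})) and, for Item~(\ref{it:pq2}), the Nijenhuis identity (\ref{eq:n}); your counts of cancelling summands (four, resp.\ five) and your observation that neither the antisymmetry of $r$ nor the cYBe is needed both check out against the paper's computation.
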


 \begin{proof} (\ref{it:pq1}) For all $x\in \ll$, we calculate
 \begin{eqnarray*}
 P(x_{(1)})\o P(x_{(2)})\hspace{-3mm}&\stackrel{(\ref{eq:p})}{=}&\hspace{-3mm}P(r^1)\o P([x,r^2])+P([x, r^1])\o P(r^2),\\
 P^2(x)_{(1)}\o P^2(x)_{(2)}\hspace{-3mm}&\stackrel{(\ref{eq:p})}{=}&\hspace{-3mm}r^1\o [P^2(x), r^2]+[P^2(x), r^1]\o r^2,\\
 P(P(x)_{(1)})\o P(x)_{(2)}\hspace{-3mm}&\stackrel{(\ref{eq:p})}{=}&\hspace{-3mm}P(r^1)\o [P(x), r^2]+P([P(x), r^1])\o r^2\\
 \hspace{-3mm}&=&\hspace{-3mm}P(r^1)\o [P(x), r^2]-P([r^1, P(x)])\o r^2\\
 \hspace{-3mm}&\stackrel{(\ref{eq:dualrep-1})}{=}&\hspace{-3mm}P(r^1)\o [P(x), r^2]+[N(r^1), P(x)]\o r^2-P([N(r^1), x])\o r^2-[r^1, P^2(x)]\o r^2,\\
 P(x)_{(1)}\o P(P(x)_{(2)})\hspace{-3mm}&\stackrel{(\ref{eq:p})}{=}&\hspace{-3mm}r^1\o P([P(x), r^2])+[P(x), r^1]\o P(r^2)\\
 \hspace{-3mm}&=&\hspace{-3mm}-r^1\o P([r^2, P(x)])+[P(x), r^1]\o P(r^2)\\
 \hspace{-3mm}&\stackrel{(\ref{eq:dualrep-1})}{=}&\hspace{-3mm}r^1\o [N(r^2), P(x)]-r^1\o P([N(r^2), x])-r^1\o [r^2, P^2(x)]+[P(x), r^1]\o P(r^2),
 \end{eqnarray*}
 then Eq.(\ref{eq:nc}) $\Leftrightarrow$ Eq.(\ref{eq:nc-1}).

 (\ref{it:pq2}) For all $x\in \ll$, by Eq.(\ref{eq:p}), one can obtain
 \begin{eqnarray*}
 P(N(x)_{(1)})\o N(x)_{(2)}\hspace{-3mm}&=&\hspace{-3mm}P(r^1)\o [N(x), r^2]+P([N(x), r^1])\o r^2\\
 \hspace{-3mm}&\stackrel{(\ref{eq:dualrep-1})}{=}&\hspace{-3mm}P(r^1)\o [N(x), r^2]+[N(x), P(r^1)]\o r^2+P([x, P(r^1)])\o r^2-[x, P^2(r^1)]\o r^2,\\
 x_{(1)}\o N^2(x_{(2)})\hspace{-3mm}&=&\hspace{-3mm}r^1\o N^2([x, r^2])+[x ,r^1]\o N^2(r^2),\\
 P(x_{(1)})\o N(x_{(2)})\hspace{-3mm}&=&\hspace{-3mm}P(r^1)\o N([x, r^2])+P([x, r^1])\o N(r^2),\\
 N(x)_{(1)}\o N(N(x)_{(2)})\hspace{-3mm}&=&\hspace{-3mm}r^1\o N([N(x), r^2])+[N(x), r^1]\o N(r^2)\\
 \hspace{-3mm}&\stackrel{(\ref{eq:na})}{=}&\hspace{-3mm}r^1\o [N(x), N(r^2)]+r^1\o N^2([x, r^2])-r^1\o N([x, N(r^2)])+[N(x), r^1]\o N(r^2),
 \end{eqnarray*}
 then Eq.(\ref{eq:nliebialg2}) $\Leftrightarrow$ Eq.(\ref{eq:nliebialg2-1}).
 \end{proof}

 \begin{lem} \label{lem:thm:pq}
 \begin{enumerate}
   \item If $(P\o \id-\id\o N)(r)=0$, then $(ad_x\circ P^2\o \id-ad_x\o N^2)(r)=0$.
   \item If $r$ is antisymmetric and $(P\o \id-\id\o N)(r)=0$, then $(N\o \id-\id\o P)(r)=0$.
 \end{enumerate}
 \end{lem}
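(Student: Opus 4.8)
The plan is to treat both parts as purely formal manipulations of the tensor $r=r^1\o r^2$, driven by the single hypothesis $(P\o\id-\id\o N)(r)=0$, which unpacks to $P(r^1)\o r^2=r^1\o N(r^2)$, supplemented in the second part by the antisymmetry $r=-r^\s$. No auxiliary structure (Lie bracket axioms, cocycle conditions) is needed beyond the defining identities.

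For Item (1) I would first upgrade the hypothesis to a two-fold statement. Applying $P\o\id$ to both sides of $P(r^1)\o r^2=r^1\o N(r^2)$ yields $P^2(r^1)\o r^2=P(r^1)\o N(r^2)$, while applying $\id\o N$ yields $P(r^1)\o N(r^2)=r^1\o N^2(r^2)$. Chaining these two equalities gives $P^2(r^1)\o r^2=r^1\o N^2(r^2)$, that is $(P^2\o\id)(r)=(\id\o N^2)(r)$. Applying $ad_x\o\id$ to this identity and reading the left side as $(ad_x\ci P^2\o\id)(r)$ and the right side as $(ad_x\o N^2)(r)$ finishes the claim. The only point to record is that $P\o\id$ and $\id\o N$ act on disjoint tensor legs and therefore commute, so the order of composition is immaterial.

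For Item (2) I would exploit the flip $\tau(a\o b)=b\o a$ (so that $\tau(r)=r^\s$) together with the intertwining relations $(N\o\id)\ci\tau=\tau\ci(\id\o N)$ and $\tau\ci(P\o\id)=(\id\o P)\ci\tau$, which hold because a one-legged operator commutes with the swap up to moving to the other leg. Starting from $(N\o\id)(r)$ and using $r=-\tau(r)$ from antisymmetry, I get $(N\o\id)(r)=-\tau\big((\id\o N)(r)\big)$. Substituting the hypothesis $(\id\o N)(r)=(P\o\id)(r)$ and pushing $\tau$ through via the second intertwining relation gives $-\tau\big((P\o\id)(r)\big)=-(\id\o P)(\tau(r))=(\id\o P)(r)$, where the final step again uses $\tau(r)=-r$. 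Hence $(N\o\id)(r)=(\id\o P)(r)$, which is exactly $(N\o\id-\id\o P)(r)=0$.

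I expect no genuine obstacle in either part, since both are formal consequences of the hypothesis. The one place deserving attention is the bookkeeping of the flip map in Item (2): one must track the two sign changes arising from the two uses of $r=-r^\s$ and confirm that, after each swap, the operator $P$ (respectively $N$) lands on the intended tensor leg. Once the two intertwining identities for $\tau$ are written down explicitly, the whole argument collapses into a short chain of equalities.
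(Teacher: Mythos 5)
Your proof is correct and matches the paper's approach: the paper dismisses this lemma with ``Straightforward,'' and your argument is precisely the direct verification intended, namely iterating the identity $(P\o \id)(r)=(\id\o N)(r)$ to get $(P^2\o\id)(r)=(\id\o N^2)(r)$ for Item (1), and conjugating by the flip $\tau$ with $\tau(r)=-r$ for Item (2). Both chains of equalities check out, including the two sign changes and the intertwining relations $(N\o\id)\ci\tau=\tau\ci(\id\o N)$ and $\tau\ci(P\o\id)=(\id\o P)\ci\tau$.
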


 \begin{proof} Straightforward.
 \end{proof}

 By Theorem \ref{thm:pq} and Lemma \ref{lem:thm:pq}, one gets

 \begin{thm}\label{thm:nqt} Let $((\ll, [,]), N)$ be a $P$-admissible Nijenhuis Lie algebra and $\d:=\d_r$ given by Eq.(\ref{eq:p}). If $r$ is an antisymmetric solution of the cYBe in $(\ll, [,])$ and the equation below holds
 \begin{eqnarray}
 &(P\o \id-\id\o N)(r)=0,& \label{eq:cybe-1}
 \end{eqnarray}
 then $((\ll, [,]), \d_r, N, P)$ is a \N Lie bialgebra. In this case, we call $((\ll, [,]), \d_r, N, P)$ {\bf quasitriangular}.
 \end{thm}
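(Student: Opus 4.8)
The plan is to verify directly the four defining conditions of a \N Lie bialgebra in Definition \ref{de:nliebialg} for the quintuple $((\ll, [,]), \d_r, N, P)$, and to observe that two of them come essentially for free. Condition \eqref{it:nliebialg1}, that $((\ll, [,]), N)$ is a \N Lie algebra, is part of the hypotheses. The first equation of condition \eqref{it:nliebialg4}, namely Eq.(\ref{eq:nliebialg1}), is by Remark \ref{rmk:de:nliebialg}(1) precisely the assertion that $((\ll, [,]), N)$ is $P$-admissible, which is assumed in the statement. So the real content lives in the coalgebra axiom (condition \eqref{it:nliebialg2}, i.e. Eq.(\ref{eq:nc})), the mixed compatibility Eq.(\ref{eq:nliebialg2}) of condition \eqref{it:nliebialg4}, and the Lie bialgebra axiom (condition \eqref{it:nliebialg3}).

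For condition \eqref{it:nliebialg3} I would invoke the discussion opening subsection \ref{se:cybe}: since $r$ is an antisymmetric solution of the cYBe, Remark \ref{rmk:qt} (equivalently Definition \ref{de:rmk:cor:4.12}) shows that $(\ll, [,], r, \d_r)$ is a quasitriangular Lie bialgebra, so in particular $(\ll, [,], \d_r)$ is a Lie bialgebra. This settles condition \eqref{it:nliebialg3} at once and simultaneously guarantees that $\d_r$, defined by Eq.(\ref{eq:p}), is a genuine Lie cobracket, which is what makes conditions \eqref{it:nliebialg2} and the second equation of \eqref{it:nliebialg4} meaningful.

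The heart of the argument is then to push conditions \eqref{it:nliebialg2} and Eq.(\ref{eq:nliebialg2}) through the reformulations in Theorem \ref{thm:pq}. By Theorem \ref{thm:pq}(1), Eq.(\ref{eq:nc}) is equivalent to Eq.(\ref{eq:nc-1}), whose two summands are adjoint-type operators applied respectively to $(P\o \id-\id\o N)(r)$ and to $(N\o \id-\id\o P)(r)$. The first tensor vanishes by the standing hypothesis Eq.(\ref{eq:cybe-1}); the second vanishes by Lemma \ref{lem:thm:pq}(2), which uses the antisymmetry of $r$ together with Eq.(\ref{eq:cybe-1}). Hence both summands of Eq.(\ref{eq:nc-1}) are zero, so Eq.(\ref{eq:nc}) holds and $((\ll, \d_r), P)$ is a \N Lie coalgebra, giving condition \eqref{it:nliebialg2}. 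For the second equation of condition \eqref{it:nliebialg4}, Theorem \ref{thm:pq}(2) reduces Eq.(\ref{eq:nliebialg2}) to Eq.(\ref{eq:nliebialg2-1}); its left-hand side is an operator applied to $(P\o \id-\id\o N)(r)$, which is zero by Eq.(\ref{eq:cybe-1}), while its right-hand side $(ad_x\circ P^2\o \id-ad_x\o N^2)(r)$ is zero by Lemma \ref{lem:thm:pq}(1), again from Eq.(\ref{eq:cybe-1}). Thus both sides vanish and Eq.(\ref{eq:nliebialg2}) holds.

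With all four conditions of Definition \ref{de:nliebialg} verified, I would conclude that $((\ll, [,]), \d_r, N, P)$ is a \N Lie bialgebra. I do not anticipate a genuine obstacle: the proof is a bookkeeping assembly of the earlier results, and the only delicate point is to confirm that the \emph{single} hypothesis Eq.(\ref{eq:cybe-1}), combined with antisymmetry of $r$, annihilates \emph{every} tensor expression occurring in Eqs.(\ref{eq:nc-1}) and (\ref{eq:nliebialg2-1}). The subtlety is that the two quantities $(N\o \id-\id\o P)(r)$ and $(ad_x\circ P^2\o \id-ad_x\o N^2)(r)$ are not literally the hypothesis but are consequences of it through Lemma \ref{lem:thm:pq}; so the crux of the whole matter is really that lemma, after which Theorem \ref{thm:nqt} follows immediately.
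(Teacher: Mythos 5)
Your proposal is correct and takes essentially the same route as the paper: the paper's entire proof is the one-line assembly ``By Theorem \ref{thm:pq} and Lemma \ref{lem:thm:pq}, one gets,'' with the Lie bialgebra axiom and the reduction of Definition \ref{de:nliebialg} to Eqs.~(\ref{eq:nc}) and (\ref{eq:nliebialg2}) handled in the paragraph opening subsection \ref{se:cybe} via Remarks \ref{rmk:qt} and \ref{rmk:de:nliebialg}, exactly as you argue. Your writeup simply makes explicit what the paper leaves implicit, namely that Eq.~(\ref{eq:cybe-1}) together with antisymmetry of $r$ (through the two items of Lemma \ref{lem:thm:pq}) annihilates every tensor expression appearing in Eqs.~(\ref{eq:nc-1}) and (\ref{eq:nliebialg2-1}).
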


 \begin{ex}\label{ex:qtnliebialg} In fact, the \N Lie bialgebras given in Example \ref{ex:nliebialg} are all quasitriangular, with $r=\l (f\o g-g\o f)$.
 \end{ex}

 \begin{defi} \label{de:cybe-1} Let $((\ll, [,]), N)$ be a $P$-admissible Nijenhuis Lie algebra. Then the cYBe (i.e. Eq.(\ref{eq:cybe})) together with Eq.(\ref{eq:cybe-1}) is call a {\bf classical $P$-Nijenhuis Yang-Baxter equation} (cPNYBe for short).
 \end{defi}

 Let $V$ be a vector space and $r\in V\o V$, define $\rr: V^*\lr V$ by
 \begin{eqnarray}\label{eq:rr}
 &\rr(v^*)=\langle v^*, r^1 \rangle r^2, ~~\forall v^*\in V^*.&
 \end{eqnarray}
 The element $r$ is {\bf nondegenerate} if $\rr$ is bijective.

 \begin{thm} \label{thm:rr} Let $((\ll, [,]), N)$ be a Nijenhuis algebra, $r\in \ll\o \ll$ antisymmetric and $P:\ll\lr \ll$ a linear map. Then $r$ is a solution of the cPNYBe if and only if for all $a^*, b^*\in \ll^*$, $\rr$ satisfies
 \begin{eqnarray}\label{eq:rr-1}
 &[\rr(a^*), \rr(b^*)]=\rr(ad^*(\rr(a^*))(b^*)-ad^*(\rr(b^*))(a^*))&
 \end{eqnarray}
 and
 \begin{eqnarray}\label{eq:rr-2}
 N(\rr(a^*))=\rr(P^*(a^*)).
 \end{eqnarray}
 \end{thm}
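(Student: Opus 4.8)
The plan is to translate the defining equation of the classical $P$-Nijenhuis Yang-Baxter equation (cPNYBe), namely the cYBe $[r_{12}, r_{13}]+[r_{13}, r_{23}]+[r_{12}, r_{23}]=0$ together with the compatibility condition $(P\o \id-\id\o N)(r)=0$, into the two operator identities \meqref{eq:rr-1} and \meqref{eq:rr-2} for $\rr$. Since the theorem is an ``if and only if,'' each of the two displayed conditions should be handled separately, and I expect a clean one-to-one correspondence: the cYBe part translates into \meqref{eq:rr-1}, while the compatibility part \meqref{eq:cybe-1} translates into \meqref{eq:rr-2}.

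First I would dispose of the easier equivalence, cYBe $\Leftrightarrow$ \meqref{eq:rr-1}. This is the standard fact (for antisymmetric $r$) that $r$ solves the cYBe if and only if the induced map $\rr:\ll^*\lr \ll$ is a relative Rota-Baxter operator / operator form of the classical $r$-matrix. Concretely, writing $r=r^1\o r^2=\br^1\o\br^2$ and pairing the tensor identity \meqref{eq:cybe} against arbitrary $a^*\o b^*\o c^*\in \ll^*\o\ll^*\o\ll^*$ in the three legs, I would expand each bracketed term using the definition $\rr(a^*)=\langle a^*, r^1\rangle r^2$ and the dual pairing $\langle ad^*(u)(a^*), v\rangle=-\langle a^*,[u,v]\rangle$. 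Matching the three summands of the cYBe with the three summands produced by the bracket $[\rr(a^*),\rr(b^*)]$ and the two $ad^*$-terms on the right of \meqref{eq:rr-1}, the antisymmetry $r=-r^\s$ is used to reconcile the index positions. Since the pairing with arbitrary functionals is nondegenerate, the tensor equation holds if and only if the operator equation \meqref{eq:rr-1} holds for all $a^*,b^*\in\ll^*$.

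Next I would treat the equivalence \meqref{eq:cybe-1} $\Leftrightarrow$ \meqref{eq:rr-2}. Here the computation is purely linear and short: applying $\rr$-type pairings to $(P\o\id)(r)$ and $(\id\o N)(r)$ and using the transpose relation $\langle P^*(a^*), r^1\rangle=\langle a^*, P(r^1)\rangle$, one finds $\rr(P^*(a^*))=\langle P^*(a^*), r^1\rangle r^2=\langle a^*, P(r^1)\rangle r^2$, which is exactly the image under the second-leg pairing of $(P\o\id)(r)$, while $N(\rr(a^*))=\langle a^*, r^1\rangle N(r^2)$ is the image of $(\id\o N)(r)$. Thus $(P\o\id-\id\o N)(r)=0$ pairs to $\rr(P^*(a^*))-N(\rr(a^*))=0$ for all $a^*$, and conversely by nondegeneracy of the pairing. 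Combining the two equivalences yields the theorem.

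The main obstacle I anticipate is purely bookkeeping in the cYBe $\Leftrightarrow$ \meqref{eq:rr-1} step: one must carefully track which of the three tensor legs is being paired and ensure that the antisymmetry of $r$ is invoked in exactly the right places so that the three terms $[r_{12},r_{13}]$, $[r_{13},r_{23}]$, $[r_{12},r_{23}]$ line up with $[\rr(a^*),\rr(b^*)]$, $-\rr(ad^*(\rr(b^*))(a^*))$, and $\rr(ad^*(\rr(a^*))(b^*))$ respectively, with the correct signs. The sign conventions coming from $ad^*$ (Eq.~\meqref{eq:anti}) and from $r=-r^\s$ are the only place errors can creep in; everything else is formal. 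I would therefore carry out this pairing explicitly on the three components and rely on the nondegeneracy of the evaluation pairing $\ll\times\ll^*\lr\mathbb{C}$ to pass between the tensor and operator formulations.
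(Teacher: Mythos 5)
Your proposal is correct and takes essentially the same route as the paper: split the cPNYBe into its two defining equations, identify the cYBe part with \eqref{eq:rr-1} (the standard operator form of the classical $r$-matrix), and match \eqref{eq:cybe-1} with \eqref{eq:rr-2} via the short pairing computation $\rr(P^*(a^*))=\langle a^*, P(r^1)\rangle r^2$ versus $N(\rr(a^*))=\langle a^*, r^1\rangle N(r^2)$. The only difference is that the paper simply cites \cite{Maj,BGLM} for the first equivalence, whereas you plan to re-derive it by explicit pairing against $\ll^*\o\ll^*\o\ll^*$, which is harmless but not needed.
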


 \begin{proof} By \cite{Maj,BGLM}, we know that $r$ is a solution of the cYBe if and only if Eq.(\ref{eq:rr-1}) holds. And for all $a^*\in \ll^*$, one gets
 \begin{eqnarray*}
  &\rr(P^*(a^*))=\langle P^*(a^*), r^1 \rangle r^2=\langle a^*, P(r^1) \rangle r^2,&\\
  &P(\rr(a^*))=\langle a^*, r^1 \rangle P(r^2).&
 \end{eqnarray*}
 So Eq.(\ref{eq:rr-2}) $\Leftrightarrow$ Eq.(\ref{eq:cybe-1}). We finish the proof.
 \end{proof}

 \begin{defi}\label{de:rbo} Let $((\ll, [,]), N)$ be a Nijenhuis Lie algebra, $(V, \rho)$ a \rep of $(\ll, [,])$ and $\a: V\lr V$ a linear map. A linear map $T: V\lr  \ll$ is called a {\bf weak relative Rota-Baxter operator (rRBO for short) associated to $(V, \rho)$ and $\a$} if for all $u, v\in V$, $T$ satisfies
 \begin{eqnarray}\label{eq:rbo-1}
 &[T(u), T(v)]=T(\rho(T(u))(v)-\rho(T(v))(u)),&
 \end{eqnarray}
 and
 \begin{eqnarray}\label{eq:rbo-2}
 &N(T(u))=T(\a(u)).&
 \end{eqnarray}
 Furthermore, if $((V, \rho), \a)$ is a representation of $(((\ll, [,]), N), N)$, then $T$ is called a {\bf rRBO associated to $((V, \rho), \a)$.}
 \end{defi}

 By Theorem \ref{thm:rr}, one can obtain

 \begin{cor}\label{cor:rbo}  Let $((\ll, [,]), N)$ be a Nijenhuis algebra, $r\in \ll\o \ll$ antisymmetric and $P:\ll\lr \ll$ a linear map. Then $r$ is a solution of the cPNYBe if and only if $\rr$ is a weak rRBO associated to $(\ll^*, ad^*)$ and $P^*$. If furthermore, $((\ll, [,]), N)$ is $P$-admissible, then $r$ is a solution of the cPNYBe if and only if $\rr$ is a rRBO associated to $((\ll^*, ad^*), P^*)$.
 \end{cor}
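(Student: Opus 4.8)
The plan is to reduce Corollary~\ref{cor:rbo} entirely to the equivalences already established in Theorem~\ref{thm:rr} and to the definitions of the relevant operators, so that the proof is essentially a matter of matching up equations. First I would recall that Theorem~\ref{thm:rr} states precisely that, for antisymmetric $r$, the element $r$ solves the cPNYBe if and only if $\rr$ satisfies both Eq.(\ref{eq:rr-1}) and Eq.(\ref{eq:rr-2}). The strategy is then to recognize these two equations as exactly the defining conditions of a weak rRBO once one specializes the representation data to $(V,\rho)=(\ll^*, ad^*)$ and $\a=P^*$.

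The key step is to compare Definition~\ref{de:rbo} with the conclusion of Theorem~\ref{thm:rr} under the substitution $T=\rr$, $V=\ll^*$, $\rho=ad^*$, $\a=P^*$. Under this dictionary, Eq.(\ref{eq:rbo-1}) becomes
\begin{eqnarray*}
&[\rr(a^*), \rr(b^*)]=\rr\big(ad^*(\rr(a^*))(b^*)-ad^*(\rr(b^*))(a^*)\big),&
\end{eqnarray*}
which is verbatim Eq.(\ref{eq:rr-1}), and Eq.(\ref{eq:rbo-2}) becomes $N(\rr(a^*))=\rr(P^*(a^*))$, which is verbatim Eq.(\ref{eq:rr-2}). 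Hence ``$\rr$ is a weak rRBO associated to $(\ll^*, ad^*)$ and $P^*$'' is literally the conjunction of Eqs.(\ref{eq:rr-1}) and (\ref{eq:rr-2}), and Theorem~\ref{thm:rr} identifies this conjunction with ``$r$ solves the cPNYBe''. This establishes the first biconditional.

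For the second statement, I would invoke the extra clause at the end of Definition~\ref{de:rbo}: a weak rRBO $T$ is promoted to a genuine rRBO associated to $((V,\rho),\a)$ exactly when $((V,\rho),\a)$ is itself a representation of the Nijenhuis Lie algebra. With the chosen data this means I must verify that $((\ll^*, ad^*), P^*)$ is a representation of $((\ll,[,]),N)$ in the sense of Definition~\ref{de:repnlie}. But this is precisely the content of the $P$-admissibility hypothesis: by Lemma~\ref{lem:dualrep} and Definition~\ref{de:admop}, $((\ll^*, ad^*), P^*)$ is a representation of $((\ll,[,]),N)$ if and only if $((\ll,[,]),N)$ is $P$-admissible, i.e. Eq.(\ref{eq:dualrep-1}) holds. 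Thus, assuming $P$-admissibility, the weak rRBO $\rr$ is automatically a rRBO associated to $((\ll^*, ad^*), P^*)$, and the second biconditional follows from the first together with this upgrade.

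I do not expect a serious obstacle here, since the result is genuinely a corollary: the analytic work has already been done in Theorem~\ref{thm:rr}. The only point requiring a little care is bookkeeping with the dual representation — making sure that the map $\rr:\ll^*\to\ll$ and the coadjoint action $ad^*$ are paired consistently with the convention in Eq.(\ref{eq:rr}) and in Lemma~\ref{lem:dualrep}, so that the specialization of Definition~\ref{de:rbo} really does land on Eqs.(\ref{eq:rr-1}) and (\ref{eq:rr-2}) without a sign or transpose discrepancy. Once that identification is confirmed, both equivalences are immediate.
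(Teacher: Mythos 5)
Your proposal is correct and follows exactly the route the paper intends: the paper derives Corollary~\ref{cor:rbo} directly from Theorem~\ref{thm:rr} (it offers no further argument), and your dictionary $T=\rr$, $(V,\rho)=(\ll^*,ad^*)$, $\a=P^*$ identifies Eqs.~(\ref{eq:rbo-1})--(\ref{eq:rbo-2}) with Eqs.~(\ref{eq:rr-1})--(\ref{eq:rr-2}) verbatim, while the upgrade from weak rRBO to rRBO via Lemma~\ref{lem:dualrep} and Definition~\ref{de:admop} is precisely the role of the $P$-admissibility hypothesis. No gaps.
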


 Quasitriangular Nijenhuis Lie bialgebras can be derived from seimi-product Lie algebra by using rRBOs.

 \begin{lem}\label{lem:rbosemi} Let $((\ll, [,]), N)$ be a Nijenhuis Lie algebra, $(V, \rho)$ a \rep of $(\ll, [,])$, $P:\ll\lr \ll$ and $\a, \b:V\lr V$ linear maps. Then TFAE:
 \begin{enumerate}
   \item\label{it:rbosemi1} $(\ll\ltimes_{\rho} V, N+\a)$ is a $(P+\b)$-admissible Nijenhuis Lie algebra.
   \item \label{it:rbosemi2} $(\ll\ltimes_{\rho^*} V^*, N+\b^*)$ is a $(P+\a^*)$-admissible Nijenhuis Lie algebra.
   \item \label{it:rbosemi3} The following are satisfied:
    \begin{enumerate}[(1)]
      \item \label{it:rbosemi3-1} $((V, \rho), \a)$ is a representation of $((\ll, [,]), N)$;
      \item \label{it:rbosemi3-2} $((\ll, [,]), N)$ is $P$-admissible;
      \item \label{it:rbosemi3-3} $((\ll, [,]), N)$ is $\b$-admissible associated to $(V, \rho)$;
      \item \label{it:admdual3-4} For $x\in \ll, v\in V$, we have
        \begin{eqnarray}\label{eq:rbosemi3-1}
        &\b(\rho(x)(\a(v)))+\rho(P^2(x))(v)=\rho(P(x))(\a(v))+\b(\rho(P(x))(v)).&
        \end{eqnarray}
    \end{enumerate}
 \end{enumerate}
 \end{lem}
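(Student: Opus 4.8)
The plan is to prove the three-way equivalence by first establishing (\ref{it:rbosemi1})$\Leftrightarrow$(\ref{it:rbosemi3}) through a direct expansion of the $(P+\b)$-admissibility condition on the semi-direct product, and then deducing (\ref{it:rbosemi2})$\Leftrightarrow$(\ref{it:rbosemi3}) by applying that equivalence to the dual representation together with Lemma~\ref{lem:dualrep}.

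For (\ref{it:rbosemi1})$\Leftrightarrow$(\ref{it:rbosemi3}), recall first that by Proposition~\ref{pro:repnlie} the pair $(\ll\ltimes_{\rho} V, N+\a)$ is a Nijenhuis Lie algebra precisely when (\ref{it:rbosemi3-1}) holds, so it remains to decide when this Nijenhuis Lie algebra is $(P+\b)$-admissible. I would write out the $P$-admissibility identity \eqref{eq:dualrep-1} for the bracket $[,]_{\heartsuit}$ of \eqref{eq:semip}, with Nijenhuis operator $N+\a$ and candidate operator $P+\b$, evaluated on $X=x+u$ and $Y=y+v$, and expand the four terms via $(N+\a)(x+u)=N(x)+\a(u)$ and $(P+\b)(y+v)=P(y)+\b(v)$. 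The resulting identity splits along $\ll\oplus V$: its $\ll$-component is exactly the $P$-admissibility identity \eqref{eq:dualrep-1} for $((\ll, [,]), N)$, namely (\ref{it:rbosemi3-2}). Since $x,y$ and $u,v$ vary independently, the $V$-component separates into the part carrying $v$ and the part carrying $u$: setting $u=0$ reproduces the $\b$-admissibility identity \eqref{eq:dualrep} of (\ref{it:rbosemi3-3}), while setting $v=0$ reproduces \eqref{eq:rbosemi3-1} of (\ref{it:admdual3-4}). Hence $(P+\b)$-admissibility amounts to the conjunction of (\ref{it:rbosemi3-2}), (\ref{it:rbosemi3-3}) and (\ref{it:admdual3-4}), and together with (\ref{it:rbosemi3-1}) this yields (\ref{it:rbosemi1})$\Leftrightarrow$(\ref{it:rbosemi3}).

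For (\ref{it:rbosemi2})$\Leftrightarrow$(\ref{it:rbosemi3}), I would apply the equivalence just established to the dual representation $(V^*, \rho^*)$, with the roles of $\a$ and $\b$ now played by $\b^*$ and $\a^*$. This characterises (\ref{it:rbosemi2}) as the conjunction of: $((V^*, \rho^*), \b^*)$ being a representation of $((\ll, [,]), N)$; the $P$-admissibility of $((\ll, [,]), N)$; the $\a^*$-admissibility of $((\ll, [,]), N)$ with respect to $(V^*, \rho^*)$; and the dual analogue of \eqref{eq:rbosemi3-1}. The first is equivalent to (\ref{it:rbosemi3-3}) directly by Lemma~\ref{lem:dualrep}, and the second is literally (\ref{it:rbosemi3-2}). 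For the remaining two I would pair with an arbitrary $v\in V$ and use $\langle \rho^*(x)w^*, v\rangle=-\langle w^*, \rho(x)v\rangle$ and $\langle \a^*(w^*), v\rangle=\langle w^*, \a(v)\rangle$: the $\a^*$-admissibility identity collapses term by term to the representation identity \eqref{eq:repnlie} for $((V, \rho), \a)$, i.e. (\ref{it:rbosemi3-1}), and the dual analogue of \eqref{eq:rbosemi3-1} collapses to \eqref{eq:rbosemi3-1} itself, i.e. (\ref{it:admdual3-4}). As $V^*$ separates points of $V$, these pairings need no finite-dimensionality hypothesis, so (\ref{it:rbosemi2})$\Leftrightarrow$(\ref{it:rbosemi3}) and the equivalence is closed.

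The computations in each step are mechanical once the terms are written out; the main obstacle is purely organisational---sorting the several terms of the semi-direct-product admissibility identity correctly by $\ll$- versus $V$-component and by the independent arguments $u$ and $v$, and tracking signs when transposing to the dual side. No idea beyond Proposition~\ref{pro:repnlie} and Lemma~\ref{lem:dualrep} is required.
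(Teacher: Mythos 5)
Your proposal is correct and follows essentially the same route as the paper: Proposition~\ref{pro:repnlie} plus a direct expansion of Eq.~(\ref{eq:dualrep-1}) for $[,]_{\heartsuit}$, sorted by $\ll$- and $V$-components and by the independent arguments $u,v$, gives (\ref{it:rbosemi1})$\Leftrightarrow$(\ref{it:rbosemi3}), and then (\ref{it:rbosemi2})$\Leftrightarrow$(\ref{it:rbosemi3}) is obtained by applying that equivalence to $(V^*,\rho^*)$ with $\b^*,\a^*$ in place of $\a,\b$ and translating each condition back via the duality pairing, exactly as in the paper's proof. Your explicit pairing computations and the remark that $V^*$ separates points (so no finite-dimensionality is needed) merely spell out steps the paper compresses into ``i.e.''.
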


 \begin{proof} ((\ref{it:rbosemi1}) $\Leftrightarrow$ (\ref{it:rbosemi3})) By Proposition \ref{pro:repnlie}, $(\ll\ltimes_{\rho}V, N+\a)$ is a Nijenhuis Lie algebra if and only if $((V, \rho), \a)$ is a representation of $((\ll, [,]), N)$. Eq.(\ref{eq:dualrep-1}) holds for $[,]_\heartsuit, N+\a, P+\b$ if and only if, for $x, y\in \ll$, $u, v\in V$,
 \begin{eqnarray*}
 &&P([N(x), y])+\b(\rho(N(x))(v))-\b(\rho(y)(\a(u)))+[x, P^2(y)]+\rho(x)\b^2(v)-\rho(P^2(y))u\\
 &&=[N(x), P(y)]+\rho(N(x))(\b(v))-\rho(P(y))(\a(u))+P([x, P(y)])+\b(\rho(x)(\b(v)))-\b(\rho(P(y))(u))
 \end{eqnarray*}
 if and only if $((\ll, [,]), N)$ is $P$-admissible, $((\ll, [,]), N)$ is $\b$-admissible associated to $(V, \rho)$ and Eq.(\mref{eq:rbosemi3-1}) holds.

 ((\ref{it:rbosemi2}) $\Leftrightarrow$ (\ref{it:rbosemi3})) Based on (\ref{it:rbosemi1}) $\Leftrightarrow$ (\ref{it:rbosemi3}), (\ref{it:rbosemi2}) is equivalent to
    \begin{enumerate}[(i)]
      \item \label{it:rbosemi3-1-1} $((V^*, \rho^*), \b^*)$ is a representation of $((\ll, [,]), N^*)$, i.e., $((\ll, [,]), N)$ is $\b$-admissible associated to $(V, \rho)$;
      \item \label{it:rbosemi3-2-1} $((\ll, [,]), N)$ is $P$-admissible;
      \item \label{it:rbosemi3-3-1} $((\ll, [,]), N)$ is $\a^*$-admissible associated to $(V^*, \rho^*)$, i.e., $((V, \rho), \a)$ is a representation of $((\ll, [,]), N)$;
      \item \label{it:admdual3-4-1} For $x\in \ll, a^*\in V^*$, we have
        \begin{eqnarray*}\label{eq:rbosemi3-1-1}
        &\a^*(\rho(x)(\b^*(a^*)))+\rho(P^2(x))(a^*)=\rho(P(x))(\b^*(a^*))+\a^*(\rho(P(x))(a^*)),&
        \end{eqnarray*}
        i.e., Eq.(\ref{eq:rbosemi3-1}) holds.
    \end{enumerate}
    Thus the proof of Lemma \ref{lem:rbosemi} is finished.
 \end{proof}

 The following result provides a method to construct \N Lie bialgebras on semi-direct product \N Lie algebras.
 \begin{thm} \label{thm:rboliebialg} Let $((\ll, [,]), N)$ be a $\b$-admissible \N Lie algebra associated to $(V, \rho)$, $P:\ll\lr \ll$, $\a: V\lr V$ be linear maps. Assume that $T: V\lr \ll$ is a linear map, which is identified as an element $\sum_{i=1}^n T(e_i)\o e^i\in (\ll\ltimes_{\rho^*} V^*)\o (\ll\ltimes_{\rho^*} V^*)$, where $\{e_i\}_{i=1}^n$ is a basis of $V$ and $\{e^i\}_{i=1}^n$ is the dual basis.
 \begin{enumerate}
   \item \label{it:rboliebialg-a} The element $r=\sum_{i=1}^n (T(e_i)\o e^i-e^i\o T(e_i))$ is an antisymmetric solution of the classical $(P+\a^*)$-\N Yang-Baxter equation in the Nijenhuis Lie algebra $(\ll\ltimes_{\rho^*} V^*, N+\b^*)$ if and only if $T$ is a weak rRBO associated to $(V, \rho)$ and $\a$, and $T \circ \b=P\circ T$.
   \item \label{it:rboliebialg-b} Assume that $((V, \rho), \a)$ is a representation of $((\ll, [,]), N)$. If $T$ is a rRBO associated to $((V, \rho), \a)$ and $T\circ \b=P\circ T$, then $r=\sum_{i=1}^n (T(e_i)\o e^i-e^i\o T(e_i))$ is an antisymmetric solution of the classical $(P+\a^*)$-\N Yang-Baxter equation in the Nijenhuis Lie algebra $(\ll\ltimes_{\rho^*} V^*, N+\b^*)$. If further, $((\ll, [,]), N)$ is $P$-admissible and Eq.(\ref{eq:rbosemi3-1}) holds, then $(\ll\ltimes_{\rho^*}V^*, N+\b^*)$ is $(P+\a^*)$-admissible. In this case, there is a Nijenhuis Lie bialgebra $((\ll\ltimes_{\rho^*} V^*, N+\b^*), \d_r, P+\a^*)$, where $\d_r$ is given by Eq.(\ref{eq:p}) with $r=\sum_{i=1}^n (T(e_i)\o e^i-e^i\o T(e_i))$.
 \end{enumerate}
 \end{thm}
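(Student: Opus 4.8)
The plan is to read the classical $(P+\a^*)$-\N Yang-Baxter equation in $(\ll\ltimes_{\rho^*} V^*, N+\b^*)$ as its two constituent pieces: the cYBe (Eq.\meqref{eq:cybe}) and the compatibility condition Eq.\meqref{eq:cybe-1}, which in this setting reads $((P+\a^*)\o\id-\id\o(N+\b^*))(r)=0$. Antisymmetry of $r=\sum_{i=1}^n(T(e_i)\o e^i-e^i\o T(e_i))$ is immediate, since $r^\s=-r$. For the cYBe piece I would invoke the standard correspondence between skew-symmetric solutions of the cYBe and relative Rota-Baxter operators (\mcite{Maj,BGLM}, the same correspondence underlying Theorem \mref{thm:rr} and Corollary \mref{cor:rbo}): the element $r$ solves the cYBe in the semidirect product $\ll\ltimes_{\rho^*} V^*$ if and only if $T$ satisfies Eq.\meqref{eq:rbo-1}.

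For part \mref{it:rboliebialg-a}, the heart of the argument is to evaluate the compatibility term directly. Since $N+\b^*$ and $P+\a^*$ act by $N,P$ on the $\ll$-leg and by $\b^*,\a^*$ on the $V^*$-leg, I would obtain
\begin{align*}
&((P+\a^*)\o\id-\id\o(N+\b^*))(r)\\
&\qquad=\sum_{i=1}^n\big(P(T(e_i))\o e^i-\a^*(e^i)\o T(e_i)-T(e_i)\o\b^*(e^i)+e^i\o N(T(e_i))\big).
\end{align*}
The dual-basis transposition identities $\sum_i\a^*(e^i)\o T(e_i)=\sum_i e^i\o T(\a(e_i))$ and $\sum_i T(e_i)\o\b^*(e^i)=\sum_i T(\b(e_i))\o e^i$ would then collapse this to
\begin{align*}
\sum_{i=1}^n\big(P(T(e_i))-T(\b(e_i))\big)\o e^i+\sum_{i=1}^n e^i\o\big(N(T(e_i))-T(\a(e_i))\big).
\end{align*}
As $\{e^i\}$ is a basis of $V^*$ and the other tensor factors lie in $\ll$, this vanishes exactly when $P\ci T=T\ci\b$ and $N\ci T=T\ci\a$, the latter being Eq.\meqref{eq:rbo-2}. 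Combining with the cYBe piece, $r$ is an antisymmetric solution of the cPNYBe precisely when Eqs.\meqref{eq:rbo-1} and \meqref{eq:rbo-2} hold---that is, $T$ is a weak rRBO associated to $(V,\rho)$ and $\a$---together with $T\ci\b=P\ci T$, which is part \mref{it:rboliebialg-a}.

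For part \mref{it:rboliebialg-b} I would first apply part \mref{it:rboliebialg-a}: with $((V,\rho),\a)$ a representation of $((\ll,[,]),N)$ and $T$ an rRBO satisfying $T\ci\b=P\ci T$, the element $r$ is an antisymmetric solution of the cPNYBe in $(\ll\ltimes_{\rho^*} V^*,N+\b^*)$. To pass to a \N Lie bialgebra through the quasitriangular construction of Theorem \mref{thm:nqt}, the remaining input is $(P+\a^*)$-admissibility of $(\ll\ltimes_{\rho^*} V^*,N+\b^*)$, which Lemma \mref{lem:rbosemi} delivers: its item \mref{it:rbosemi3} is satisfied because $((V,\rho),\a)$ is a representation, $((\ll,[,]),N)$ is $P$-admissible and $\b$-admissible associated to $(V,\rho)$ (the latter a standing hypothesis of the theorem), and Eq.\meqref{eq:rbosemi3-1} is assumed; hence item \mref{it:rbosemi2} yields the admissibility sought. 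Feeding the antisymmetric cPNYBe solution $r$ together with this admissibility into Theorem \mref{thm:nqt} gives that $((\ll\ltimes_{\rho^*} V^*,N+\b^*),\d_r,P+\a^*)$ is a \N Lie bialgebra, with $\d_r$ from Eq.\meqref{eq:p}.

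The cited relative Rota-Baxter/cYBe correspondence and the invocations of Lemma \mref{lem:rbosemi} and Theorem \mref{thm:nqt} are routine. I expect the main obstacle to be the bookkeeping in the displayed computation of part \mref{it:rboliebialg-a}: one must keep straight which summand of $N+\b^*$ and of $P+\a^*$ hits each tensor leg, and correctly transfer $\a^*,\b^*$ off the $V^*$-leg onto $\a,\b$ inside $T$ by transposition---all while remembering that the defining identity Eq.\meqref{eq:rbo-1} is stated with respect to $(V,\rho)$ even though $r$ lives in the $\rho^*$-semidirect product.
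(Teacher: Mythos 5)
Your proposal is correct and takes essentially the same route as the paper's own proof: the cYBe half via the known correspondence between antisymmetric solutions in $\ll\ltimes_{\rho^*}V^*$ and operators satisfying Eq.~(\ref{eq:rbo-1}), the identical dual-basis expansion of $((P+\a^*)\o \id-\id\o (N+\b^*))(r)$ to extract $T\ci\b=P\ci T$ and Eq.~(\ref{eq:rbo-2}), and part (b) obtained from part (a) together with Lemma~\ref{lem:rbosemi} and Theorem~\ref{thm:nqt}. You merely make explicit the transposition identities and the linear-independence step that the paper leaves implicit, which is harmless.
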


 \begin{proof} (\ref{it:rboliebialg-a}) By \cite{Bai2}, $r=\sum_{i=1}^n (T(e_i)\o e^i-e^i\o T(e_i))$ is an antisymmetric solution of the Eq.(\ref{eq:cybe}) in the Lie algebra $\ll\ltimes_{\rho^*} V^*$ if and only if $T$ satisfies Eq.(\ref{eq:rbo-1}) for $(V, \rho)$. It remains to prove that Eq.(\ref{eq:cybe-1}) for $P+\a^*$ and $N+\b^*$ $\Leftrightarrow$ $T \circ \b=P\circ T$ and Eq.(\ref{eq:rbo-2}) hold. In fact,
 \begin{eqnarray*}
 ((P+\a^*)\o \id-\id\o (N+\b^*))(r)\hspace{-3mm}&=&\hspace{-4mm}\sum_{i=1}^n(P(T(e_i))\o e^i-\a^*(e^i)\o T(e_i)-T(e_i)\o \b^*(e^i)+e^i\o N(T(e_i)))\\
 &=&\hspace{-4mm}\sum_{i=1}^n(P(T(e_i))\o e^i-e^i\o T(\a(e_i))-T(\b(e_i))\o e^i+e^i\o N(T(e_i))).
 \end{eqnarray*}
 Therefore we can finish the proof of (\ref{it:rboliebialg-a}).

 \noindent
 (\ref{it:rboliebialg-b}) can be checked by Item (\ref{it:rboliebialg-a}) and Lemma \ref{lem:rbosemi}.
 \end{proof}

 \section*{Acknowledgment} This work is supported by National Natural Science Foundation of China (Nos. 12471033, 12471130) and Natural Science Foundation of Henan Province (No. 242300421389).

 \bigskip
\noindent
{\bf {\large Declarations}}
\smallskip 

\noindent
{\bf Competing interests} The authors claim that there is no conflict of interests.

\noindent{\bf{Ethical Approval}} Not applicable

\noindent{\bf{Funding}} Not applicable

\noindent{\bf{Availability of data and materials}}  Not applicable

\end{document}